\numberwithin{equation}{section}
\theoremstyle{plain}
\newtheorem{theorem}{Theorem}[section]
\newtheorem{lemma}[theorem]{Lemma}
\newtheorem{proposition}[theorem]{Proposition}
\newtheorem{corollary}[theorem]{Corollary}
\newtheorem{claim}[theorem]{Claim}
\theoremstyle{definition}
\newtheorem{definition}[theorem]{Definition}
\newtheorem{remark}[theorem]{Remark}
\newcommand{\eps}{\varepsilon}
\renewcommand{\le}{\leqslant}
\renewcommand{\ge}{\geqslant}
\newcommand{\E}{\mathbb E}
\renewcommand{\P}{\mathbb P}
\newcommand{\Z}{\mathbb Z}
\newcommand{\cA}{\mathcal A}
\newcommand{\cC}{\mathcal C}
\newcommand{\cD}{\mathcal D}
\newcommand{\cE}{\mathcal E}
\newcommand{\cG}{\mathcal G}
\newcommand{\cT}{\mathcal T}
\newcommand{\Gnp}{\cG_{n,p}}
\def\@tocline#1#2#3#4#5#6#7{\relax
\ifnum #1>\c@tocdepth 
\else
\par \addpenalty\@secpenalty\addvspace{#2}%
\begingroup \hyphenpenalty\@M
\@ifempty{#4}{%
  \@tempdima\csname r@tocindent\number#1\endcsname\relax
}{%
  \@tempdima#4\relax
}%
\parindent\z@ \leftskip#3\relax \advance\leftskip\@tempdima\relax
\rightskip\@pnumwidth plus4em \parfillskip-\@pnumwidth
#5\leavevmode\hskip-\@tempdima
  \ifcase #1
   \or\or \hskip 1em \or \hskip 2em \else \hskip 3em \fi%
  #6\nobreak\relax
\dotfill\hbox to\@pnumwidth{\@tocpagenum{#7}}\par
\nobreak
\endgroup
\fi}
\author[Z. Bartha]{Zsolt Bartha}
\address{HUN-REN Alfr\'ed R\'enyi Institute of Mathematics}
\email{bartha@renyi.hu}
\author[B. Kolesnik]{Brett Kolesnik}
\address{University of Warwick, 
Department of Statistics}
\email{brett.kolesnik@warwick.ac.uk}
\author[G. Kronenberg]{Gal Kronenberg}
\address{University of Oxford, 
Department of Mathematics}
\email{gal.kronenberg@maths.ox.ac.uk}
\author[Y. Peled]{Yuval Peled}
\address{Hebrew University, Einstein Institute of Mathematics}
\email{yuval.peled@mail.huji.ac.il}
\keywords{bootstrap percolation; cellular automaton; 
critical threshold; Fuss--Catalan numbers; phase transition; random graph; slow aging; weak saturation}
\subjclass[2010]{05C05;	
05C35;				
05C65;				
05C80;				
60K35;				
68Q80}				
\begin{document}

\title[Fuss--Catalan thresholds in graph bootstrap percolation]
{Sharp Fuss--Catalan thresholds in graph bootstrap percolation}

\begin{abstract}
We study graph bootstrap percolation on the Erd\H{o}s--R\'enyi 
random graph $\Gnp$. For all $r \ge 5$, we locate the sharp 
$K_r$-percolation threshold $p_c \sim (\gamma n)^{-1/\lambda}$, 
solving a problem of Balogh, Bollob{\'a}s and Morris. 
The case $r=3$ is the classical graph connectivity threshold, 
and the threshold for $r=4$ was found using strong 
connections with the well-studied $2$-neighbor dynamics 
from statistical physics. 
When $r \ge 5$, such connections break down, 
and the process exhibits much richer behavior. 
The constants $\lambda=\lambda(r)$ 
and $\gamma=\gamma(r)$ in $p_c$ are
determined by a class of 
$\left({r\choose2}-1\right)$-ary tree-like graphs, 
which we call $K_r$-tree witness graphs. 
These graphs are associated with the most efficient 
ways of adding a new edge in the $K_r$-dynamics, 
and they can be counted using the Fuss--Catalan numbers. 
Also, in the subcritical setting, we determine the asymptotic 
number of edges added to $\Gnp$, showing that the 
edge density increases only by a constant factor, 
whose value we identify.
\end{abstract}

\maketitle

\section{Introduction}
\label{S_Intro}

Bollob\'{a}s \cite {Bol68} introduced the concept
of {\it weak saturation} in graph theory, which gives rise to a process called 
{\it $H$-bootstrap percolation} (or {\it $H$-dynamics}) on a graph $G$. 
Given a fixed {\it template} graph $H$ and an initial graph 
$G\subseteq K_n$, at each step of the process a new edge $e$ is added, 
provided there is a copy of $H$ in $K_n$ for which $e$ is the only 
edge that has not yet been added to $G$. 
If no such edge exists, the process terminates, and we denote the 
final graph by  $\langle G \rangle_H$. 
We say that $G$ is {\it weakly $H$-saturated} 
(or that it {\it $H$-percolates})
if every edge 
is eventually added,
that is,  $\langle G \rangle_H=K_n$.

In this work, we find, for every $r\ge 5$, 
the sharp threshold probability
$p_c(n,K_r)$  
at which the Erd{\H o}s--R\'{e}nyi 
random graph $\Gnp$ is likely to $K_r$-percolate. 
This solves, in a strong form, an open problem of 
Balogh, Bollob\'{a}s and Morris  \cite {BBM12}, 
who located the threshold up to polylogarithmic factors 
and asked for the threshold up to constant factors.
This result concerns one of the central problems 
in graph bootstrap percolation. 
Since the seminal work of Bollob{\'a}s \cite {Bol68}, 
the case $H=K_r$ has attracted particular attention, 
and following Balogh, Bollob{\'a}s, and Morris \cite {BBM12}, 
the precise behavior of the $K_r$-dynamics 
on $G=\Gnp$ has remained open.

The classical model of {\it bootstrap percolation} 
was introduced in the statistical  physics literature by  
Chalupa, Leath and Reich \cite{CLR79}, 
and is one of the most well studied of all {\it cellular automata}. 
Such processes, as pioneered by  
Ulam \cite {Ula52} and von Neumann \cite {vonNeu66}, 
are notable in that, although they evolve only 
by local rules, they can exhibit complicated 
global behavior of interest; see, e.g., Gardner's \cite{Gar70}
introduction to Conway's {\it Game of Life}. 
In vertex bootstrap percolation on a graph $G$, 
each vertex is initially {\it infected} with probability $p$, 
and further vertices are infected by some local rule. 
For instance, and 
most commonly, by the {\it $r$-neighbor rule},  
whereby a vertex with at least $r$ 
infected neighbors becomes infected. 
In this context, {\it percolation} is the event that all 
vertices are eventually infected. 
There are many variations; see, e.g., 
the survey by Morris
\cite{Mor17}
and references therein. 

Graph bootstrap percolation can be seen as a 
generalization of the classical $r$-neighbor model, 
where the template 
graph $H$ encodes a more complicated update rule.
Notably, the ``infection'' in graph bootstrap percolation 
spreads amongst  
the \textit{edges} of some graph, 
rather than its vertices. The initialization at  
$\Gnp$ is therefore quite natural, since in bootstrap percolation 
each site is assumed to be initially infected 
independently with probability $p$.

The study of $\Gnp$ focuses on graph properties that occur 
\textit{with high probability}, i.e., with probability tending to $1$ as 
$n\to\infty$. A main theme in random graph theory, initiated by the 
seminal work of Erd\H{o}s and R\'enyi  \cite {ER59}, is the systematic 
study of threshold probabilities for monotone graph properties. 
A property $P$ is said to have a \textit{sharp threshold} at $p_c$ 
if for every fixed $\varepsilon>0$, with high probability, 
$\cG_{n,(1+\varepsilon)p_c}$ has property $P$ but $\cG_{n,(1-\varepsilon)p_c}$ 
does not. For example, a classical result from  \cite {ER59} shows 
that graph connectivity has a sharp threshold at $p_c\sim (\log n)/n$.  
Observe that $\langle G\rangle_{K_3}=K_n$ if and only if $G$ is connected, 
and therefore thresholds 
for $H$-percolation can be viewed as higher forms of the 
celebrated graph connectivity threshold.

\subsection{Main results}

As in \cite{BBM12}, we let 
\[
p_c(n,H)
=\inf\{p>0:\P(\langle \cG_{n,p}\rangle_H=K_n)\ge1/2\}
\]
denote the {\it critical $H$-percolation threshold}.
Roughly speaking, at this point $\Gnp$ is likely to $H$-percolate. 
One of the main results in \cite[Theorem 1]{BBM12} shows that, 
for every $r\ge 3$, 
\begin{equation}\label{E_BBM12}
p_c(n,K_r)=n^{-1/\lambda+o(1)},
\end{equation}
as $n\to\infty$, 
where 
\begin{equation}\label{E_lambda}
\lambda =\frac{\binom{r}2-2}{r-2}\,.
\end{equation}
Intuitively, $-1/\lambda$ is the critical exponent 
since once $p\gg n^{-1/\lambda}$,
 the number of edges added in the first round of the $K_r$-dynamics is 
 much larger than the initial number of edges in $\Gnp$.

The classical result of Erd\H{o}s and R\'enyi \cite {ER59} 
implies that $p_c(n,K_3)\sim(\log n)/n$, where, as usual, 
we write $a_n\sim b_n$ if $a_n/b_n\to1$, 
as $n\to\infty$. When $r=4$, the threshold was identified up to constants
in \cite{BBM12}, and the existence of a sharp threshold at 
$p_c(n,K_4)\sim(3n\log n)^{-1/2}$ follows by \cite{AK18,AK21,Kol22}.

Our first main result establishes a sharp 
threshold for $K_r$-percolation, in 
all of the remaining cases $r\ge 5$. 
Furthermore, we locate the precise
first-order asymptotics of $p_c(n,K_r)$. 
Let $\alpha_d=(d+1)^{d+1}/d^d$
denote the asymptotic growth rate of the $d$th 
Fuss--Catalan numbers (see Section \ref{SS_countingTWGs}), 
that is, the number of $(d+1)$-ary plane trees. 

\begin{theorem}
\label{T_pc}
Fix $r\ge5$. Let $\gamma>0$ be the unique positive root of
\begin{equation}\label{E_gamma}
(r-2)!\gamma^{r-2} = \alpha_{{r\choose2}-2}\,.
\end{equation}
Then, for every $\varepsilon>0$,
\[
\P(\langle \Gnp\rangle_{K_r}=K_n) \to \left\{
\begin{matrix}
0,&\text{if }\gamma np^\lambda =1 -\varepsilon;\\
1,&\text{if }\gamma np^\lambda =1+\varepsilon, 
\end{matrix}
\right.
\]
as $n\to\infty$. In particular,
\[
p_c(n,K_r)\sim (\gamma n)^{-1/\lambda}.
\]
\end{theorem}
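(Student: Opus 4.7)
The argument splits into the two directions of a sharp threshold, both built on the combinatorics of $K_r$-tree witness graphs. A tree witness for an output edge $e$ is constructed by fixing a $K_r$ containing $e$ and, for each of the other $\binom{r}{2}-1$ edges, either declaring it a \emph{boundary edge} (required to lie in $\Gnp$) or recursing with another $K_r$. Each recursion introduces $r-2$ unordered fresh vertices---this yields the $(r-2)!$ factor in \eqref{E_gamma}---and produces a rooted plane tree of out-degree $\binom{r}{2}-1$ whose count with $k$ internal nodes grows like $\alpha_{\binom{r}{2}-2}^k$ by the Fuss--Catalan formula. A direct computation shows that the expected number of tree witnesses of size $k$ rooted at a fixed edge is asymptotic, up to polynomial corrections in $k$, to $p \cdot \bigl((\gamma n p^\lambda)^{r-2}\bigr)^{k-1}$, where \eqref{E_gamma} has been used to absorb $\alpha_{\binom{r}{2}-2}/(r-2)!$. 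Thus $\gamma n p^\lambda = 1$ is exactly the radius-of-convergence threshold for the geometric series in $k$.

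For the $0$-statement, when $\gamma n p^\lambda \le 1-\varepsilon$, a standard pruning lemma shows that every minimal derivation of a single edge in the $K_r$-dynamics is tree-like, so it suffices to bound the expected number of tree witnesses. Summing over $k$ gives $O(p) = O(n^{-1/\lambda})$ per edge, hence $O(n^{2-1/\lambda})=o(n^2)$ total expected additions; Markov's inequality then forces the closure to be strictly smaller than $K_n$ with high probability.

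The $1$-statement is the serious part and I would attack it by sprinkling $p = p_1 + p_2 - p_1p_2$ at threshold scaling. On $\cG_{n,p_1}$ the first moment of tree witnesses per edge diverges and the task reduces to a second-moment calculation. The principal obstruction is that two witnesses rooted at the same edge may share a large subtree, creating correlated contributions; handling this requires a strict-balance statement for every proper sub-witness of a $K_r$-tree witness graph, which via the Fuss--Catalan recursion reduces to an arithmetic inequality pinned to the value $\lambda(r)$. Once this is established, Janson-type inequalities deliver a witness for a random edge with high probability, and a union-bound upgrade handles all $\binom{n}{2}$ edges simultaneously up to an exceptional set of size $o(n^2)$. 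The sprinkle $\cG_{n,p_2}$ then closes those exceptional edges; the iterative character of this closing step is the source of the ``slow aging'' behavior cited in the keywords.

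The main obstacle is unambiguously the $1$-statement: verifying strict balance for every sub-structure of a $K_r$-tree witness graph uniformly in $r \ge 5$, and then converting pointwise witness existence into the universal statement that \emph{every} missing edge percolates.
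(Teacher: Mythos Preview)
Your identification of the hard direction is inverted, and the ``standard pruning lemma'' you invoke for the $0$-statement does not exist. It is \emph{not} true that every minimal derivation of an edge in the $K_r$-dynamics is tree-like: there are witness graphs with strictly positive \emph{excess} $\chi(W)=e(W)-\lambda\sigma(W)-1>0$, arising from ``costly'' merges in the red-edge algorithm, and these can in turn trigger cascades of zero-cost internal steps that propagate far through the structure. Ruling out TWGs by first moment (your Lemma~\ref{L_lbTWG}-style bound) is indeed trivial, but that leaves the entire class of non-tree witness graphs unhandled. The paper devotes Sections~\ref{S_LB}--\ref{S_sharpLB} to this: it introduces a \emph{tree decomposition} of a general witness graph into a bad part and $O(\kappa)$ maximal tree parts (Lemmas~\ref{lem:tree_decomp_struct}--\ref{L_maxTC}), and then, to get from the coarse bound $p_c=\Theta(n^{-1/\lambda})$ to the sharp constant, compares the spread of zero-cost edges within a tree part to a modified $(r-2)_*$-neighbor bootstrap process on the $K_r$-tree (Lemmas~\ref{L_CompLem}--\ref{L_SprLem}), ultimately showing each tree part has only $O(\kappa)$ target edges and hence the count of witness graphs of cost $\kappa$ is $\gamma^{\sigma}\sigma!\,\sigma^{O(\kappa^2)}$ rather than merely $A^{\sigma}\sigma!$ for some $A>\gamma$.

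For the $1$-statement your outline is closer to the paper's, but two points differ. First, the paper does not sprinkle: it shows directly via Janson that a \emph{balanced} $k$-TWG (with $k\asymp\log n$) exists for each fixed edge with failure probability $o(n^{-2})$, then union-bounds over all $\binom{n}{2}$ edges. Second, the ``strict balance of sub-witnesses'' you allude to is made precise via three parameters of a partial TWG $S\subsetneq T$ --- edge efficiency $\cE(S)$, deficiency $\cD(S,T)$, and extendability $\cT(S)$ --- and Lemma~\ref{L_strTWB} establishes $\cD,\cT=O(\cE)$ by a careful case analysis over the leaf-grafting recursion, which then feeds into the $\Delta/\mu^2$ estimate. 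The balancedness restriction on the primal branches is what forces $\sigma(S)\ge ck$ in the $\cE=0$ case, closing the second-moment argument without sprinkling.
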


Problem 3 in \cite{BBM12} asks for $p_c(n,K_r)$
up to constant multiplicative factors. Theorem \ref{T_pc} gives, moreover,
the precise leading-order asymptotics. 
In addition, Problem 2 in \cite{BBM12} asks for which $H$ is the critical 
$H$-percolation threshold sharp, and, 
by Theorem \ref{T_pc}, the threshold is sharp when $H$ is a clique. 

The distinction between the behavior of $K_r$-percolation 
for $r=4$ and for $r \ge 5$ 
is already evident from the absence of a polylogarithmic factor in $p_c(n,K_r)$. 
There are several interesting reasons for this transition. 
First, although all cliques $K_r$ are, in a certain sense, {\it balanced}, 
only once $r\ge5$ are they {\it strictly balanced} (see Definition \ref{D_SB} below). 
Related to this, on $\Gnp$ the $K_4$-dynamics are essentially dominated by the 
simpler $2$-neighbor bootstrap percolation dynamics. 
Specifically, the most likely way for $\Gnp$ to $K_4$-percolate 
is to start with the two vertices in some edge $e\in E(\Gnp)$ and then 
iteratively add vertices with at least two neighbors amongst those previously added. 
By induction, after each such vertex is added, a $K_4$-percolating graph
is obtained. 
In this sense, $K_4$-percolation on $\Gnp$ (like most forms 
of bootstrap percolation) spreads by ``nucleation,''  
meaning that the percolation spreads locally from the 
boundary of a growing structure (the ``nucleus'') 
until it reaches a critical size (sometimes called a ``critical droplet''), 
at which point the process transitions to a final stage of explosive growth.
On the other hand, for $r\ge5$, the connection to the $(r-2)$-neighbor dynamics breaks down, 
and $K_r$-percolation on $\Gnp$ exhibits qualitatively different behavior. 
In particular, when $r\ge5$, near criticality  
$\Gnp$ has no $K_r$-percolating subgraphs of order 
$1 \ll k\le n^c$, for some $0<c<1$; see Section \ref{S_nuce}. 

We thus take a different approach for finding the sharp threshold for the $K_r$-dynamics. 
Rather than studying $K_r$-percolating subgraphs
of $\Gnp$, we carry out a systematic study of its {\it witness graphs}, that is, 
inclusion-minimal subgraphs of $\Gnp$ that add a given edge. 
The parameter $\lambda$ appears in the exponent of $p_c(n,K_r)$ 
because any witness graph for a given edge with $v$ additional 
vertices contains at least $\lambda v+1$ edges. 
The constant $\gamma$  that appears in $p_c(n,K_r)$ is 
connected to the Fuss–Catalan numbers since the graphs that most efficiently 
add a given edge, which we will call {\it tree witness graphs (TWGs)}, have a certain 
$\left(\binom r2 -1\right)$-ary tree-like structure; see Figure \ref{F_KrTs} below.

Establishing sufficiently precise bounds for the number of witness graphs 
that are more ``costly'' than TWGs turns out to be highly non-trivial, 
and much of the current work is devoted to this task. 
The central challenge, in 
analyzing the structure of 
such graphs, is to 
control the spread of ``zero-cost'' edges that occurs
after a ``costly'' (non-tree-like) step in the $K_r$-dynamics. 
We refer the reader to Section \ref{S_Discussion} for 
a more in-depth 
overview of the challenges and our main ideas in this work.
\smallskip

In addition, our proof of Theorem \ref{T_pc} shows 
that in the subcritical regime almost all edges added to 
$\Gnp$ arise from TWGs. 
This allows us to derive the following sharp estimate 
for the number of edges in 
$\langle\Gnp\rangle_{K_r}$.

\begin{theorem}
\label{T_sub_density}
Fix $r\ge5$. 
Let $p=(\bar\gamma n)^{-1/\lambda}$
for some 
$\bar\gamma > \gamma$, where 
$\gamma$ is as  in \eqref{E_gamma} above. Define 
\[
\bar\alpha:= (r-2)!\bar\gamma^{r-2} > \alpha_{{r\choose2}-2}\,, 
\]
and let $1<\rho < 
1+\left(\binom r2-2\right)^{-1}$
be the smallest root of the equation 
\[
\rho^{\binom r2-1}=\bar\alpha(\rho -1)\,.
\]
Then, 
\[
|E(\langle \Gnp\rangle_{K_r})|\sim \rho\cdot p\binom n2\,,
\]
in probability, as $n\to\infty$.
\end{theorem}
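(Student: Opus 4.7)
The plan is to compute $\E|E(\langle \Gnp \rangle_{K_r})|$ via a first--moment enumeration of tree witness graphs (TWGs), identify the resulting series with $\rho$ through the Fuss--Catalan generating function, and conclude by a second--moment argument. The key input from the proof of Theorem \ref{T_pc} is that, in the subcritical regime, the contribution of non--tree witnesses is negligible compared with that of TWGs.

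For the expectation, fix a pair $e = \{u,v\}$ and decompose
\[
\P(e \in \langle \Gnp \rangle_{K_r}) = p + (1-p)\,\P(e \text{ is added} \mid e \notin \Gnp).
\]
A TWG of size $k$ rooted at $e$ uses $(r-2)k$ new vertices and has $(\binom{r}{2}-2)k + 1$ leaf (base) edges, with an underlying brick--tree that is $(\binom{r}{2}-1)$-ary with $k$ internal nodes; let $T_k$ denote the corresponding Fuss--Catalan count, i.e., the number of $(d+1)$-ary plane trees with $k$ internal nodes, where $d = \binom{r}{2}-2$. Combined with $\binom{n}{r-2}^k \sim n^{(r-2)k}/((r-2)!)^k$ choices of new vertex sets and a factor $p^{(\binom{r}{2}-2)k+1}$ for the leaves, and using $(np^\lambda)^{r-2} = (r-2)!/\bar\alpha$ (which follows from $p = (\bar\gamma n)^{-1/\lambda}$ and $\bar\alpha = (r-2)!\bar\gamma^{r-2}$), one obtains
\[
\E[\#\text{TWGs of size $k$ rooted at $e$}] \sim T_k\,\bar\alpha^{-k}\,p.
\]
Summing over $k \ge 1$ and invoking the identity $F(x) = 1 + xF(x)^{d+1}$, where $F(x) = \sum_{k \ge 0} T_k x^k$, at $x = 1/\bar\alpha$, yields $\sum_{k \ge 1} T_k \bar\alpha^{-k} = F(1/\bar\alpha) - 1 = \rho - 1$, with $\rho$ the smallest positive root of $\rho^{\binom{r}{2}-1} = \bar\alpha(\rho-1)$; the smallest root is selected because $F$ is the analytic branch at $x = 0$, well defined since $\bar\alpha > \alpha_d$. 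Adding the contribution of $e \in \Gnp$ gives $\E|E(\langle \Gnp \rangle_{K_r})| \sim \rho \cdot p \binom{n}{2}$.

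For concentration, I would control the variance of $\sum_e X_e$, with $X_e = \mathbf{1}[e \in \langle \Gnp \rangle_{K_r}]$, by bounding covariances. Two TWGs rooted at distinct edges $e, e'$ are typically vertex--disjoint outside their root endpoints in this sparse regime, so their joint probability factorizes to leading order; overlapping pairs are counted by a parallel enumeration and contribute at a lower order. Summing over edge pairs yields $\mathrm{Var}\bigl(\sum_e X_e\bigr) = o\bigl((\rho p \binom{n}{2})^2\bigr)$, and Chebyshev's inequality delivers convergence in probability.

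The main obstacle is making rigorous the assertion that non--TWG witnesses are negligible, uniformly in $k$ and across both moment computations. Non--tree (``costly'') witnesses are exactly the graphs analyzed in Section \ref{S_Discussion}, and controlling their expected count relies on the structural analysis of the ``spread of zero--cost edges'' after a costly step. The bound we need here is of the same nature as the one required to prove Theorem \ref{T_pc} slightly subcritically, and should be available from that work. A secondary subtlety is that an edge may be the root of several TWGs; a Bonferroni--type inclusion--exclusion handles the overcounting, since the expected number of TWG pairs rooted at a common edge is $o(1)$ per edge in this regime.
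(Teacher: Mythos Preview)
Your proposal is correct and follows essentially the same strategy as the paper: identify $\rho$ via the Fuss--Catalan generating function evaluated at $1/\bar\alpha$, compute the first moment of TWG counts, control the second moment, and bound the non-tree contribution by invoking the structural work behind Theorem~\ref{T_pc}.

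The paper organizes the argument slightly differently, and its organization is worth noting. Rather than working with the edge indicators $X_e=\mathbf{1}[e\in\langle\Gnp\rangle_{K_r}]$, the paper counts TWGs directly: with $X_k$ the total number of order-$k$ TWGs in $\Gnp$, it truncates at a fixed $K$, shows $\sum_{k\le K}X_k$ concentrates by a second moment on each $X_k$ (the overlap bound uses exactly Lemma~\ref{L_strTWB}(1), i.e.\ $e(S)\le\lambda\sigma(S)$ for partial TWGs), and then bounds three error terms in expectation only: large TWGs, pairs of small TWGs sharing a target, and edges added by non-tree witnesses (via the proof of Proposition~\ref{P_pcLBsharp}). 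The deterministic inequality
\[
\bigl||E(\langle\Gnp\rangle_{K_r})|-\textstyle\sum_{k\le K}X_k\bigr|\le \textstyle\sum_{k>K}X_k+Y+Z
\]
then finishes. This decoupling is convenient: the second moment only ever sees TWGs, so the delicate non-tree analysis is confined to a first-moment bound. Your route through $\mathrm{Var}(\sum_e X_e)$ works too, but since $X_e$ depends on all witnesses (tree and non-tree), you would need the non-tree control inside the variance computation as well---which you correctly flag as the main obstacle. One minor point: your vertex-counting heuristic ``$\binom{n}{r-2}^k$'' is not literally how the count factors (the $(r-2)!^{-k}$ arises from the formula for $t(k)$, not from independent choices), though your final asymptotic is right.
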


In other words, for subcritical $p$, 
the number of edges in $\Gnp$ essentially 
only increases by a factor of $\rho$, universally bounded by 
$1+\left(\binom r2-2\right)^{-1}$,
after the $K_r$-dynamics have stabilized.
See Figure \ref{F_pc}
for an illustration of our main results (however, see also Section \ref{S_slow}
for a discussion on the limitations of such simulations). 

\begin{figure}[h!]
\centering
\begin{subfigure}[t]{.32\textwidth}
    \includegraphics[trim=0cm 0cm 0cm 0cm, clip, width=\textwidth]{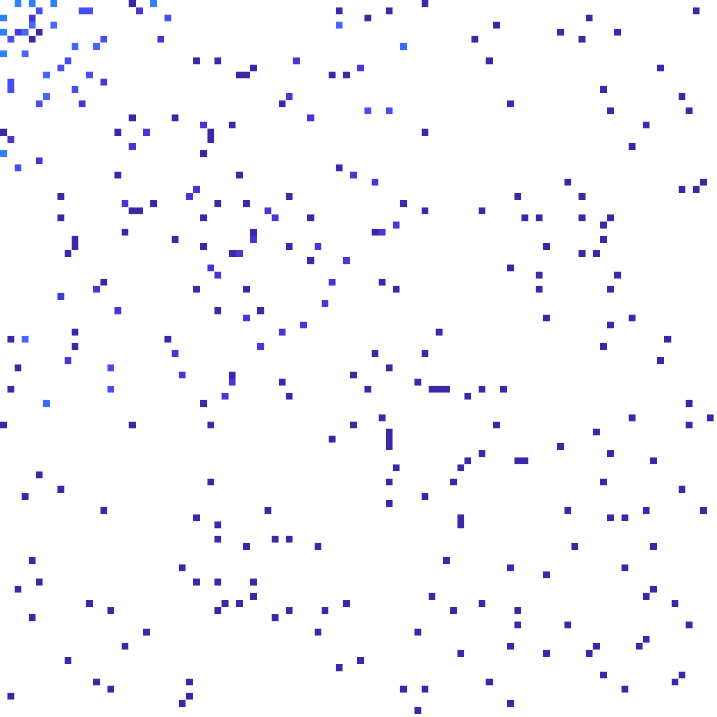}
    \caption{}
    \label{F_pc_a}
\end{subfigure}\hskip0.1cm
\begin{subfigure}[t]{.32\textwidth}
    \includegraphics[trim=0cm 0cm 0cm 0cm, clip, width=\textwidth]{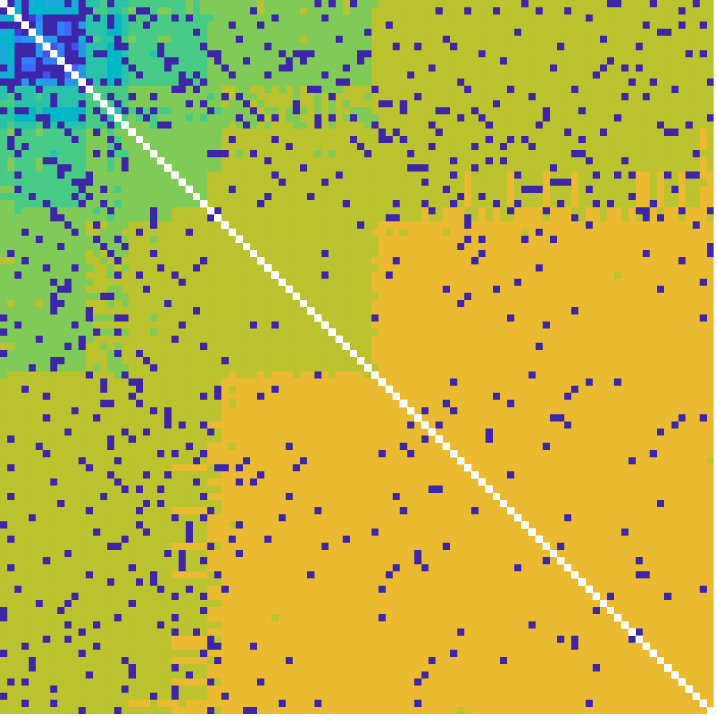}
    \caption{}
    \label{F_pc_b}
\end{subfigure}\hskip0.1cm
\begin{subfigure}[t]{.32\textwidth}
    \includegraphics[trim=0cm 0cm 0cm 0cm, clip, width=\textwidth]{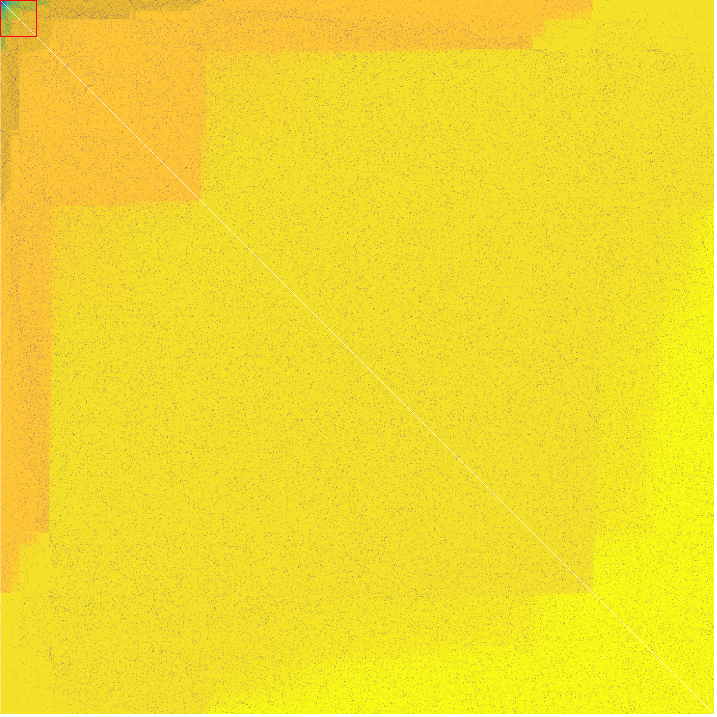}
    \caption{}
    \label{F_pc_c}
\end{subfigure}

\caption{Simulations of the $K_5$-dynamics on random graphs with 
$n=2000$ vertices, before and after criticality. 
Each point $(i,j)$ is colored along a gradient from dark blue to yellow 
according to the time at which the edge $\{v_i, v_j\}$ is added in the $K_5$-dynamics 
(obtaining a symmetric image), where the  vertices are ordered retrospectively, 
biased so that vertices incident to many early-added edges appear earlier. 
Dark blue points represent edges present in the initial graph, 
yellow those added in the final round, and white the edges that are never added.
(\subref{F_pc_a}) In the sub-critical dynamics, the edge density only increases by 
a constant factor (few   lighter blue pixels appear). The image shows $100$ vertices, 
as no percolation occurs in other vertices. The process finished in $4$ rounds. 
(\subref{F_pc_b}) The early stages of the super-critical dynamics (dark blue to green) 
on the first $100$ vertices in the biased order.
(\subref{F_pc_c}) The process ``explodes,'' 
and all remaining edges are added (orange to yellow). 
The center image is the upper left region (highlighted in red) in the right image. 
The process finished in $15$ rounds.}
\label{F_pc}
\end{figure}

\subsection{Related work}

There is a large mathematical 
literature on bootstrap percolation. In particular, 
the $r$-neighbor dynamics on 
Euclidean lattices $\Z^d$
and cubes $[n]^d$
is well studied;
see, e.g., \cite{vanEnt87,Sch92,AL88,Hol03,BBDCM12,HT24,CC99}. 
Moreover, Balister, Bollob\'{a}s, Morris and Smith \cite{BBMS25}
(and see references therein) 
have recently proven the 
{\it universality conjecture} 
concerning more general 
forms of monotone cellular automata.
We also note that 
Janson, \L uczak, Turova and Vallier
\cite{JLTV12} 
(cf.\ \cite{FKR17,AK21,TGL19}) 
have studied the $r$-neighbor dynamics on
$\Gnp$ in great detail.

In graph theory, there has been continued interest in weak saturation
and graph bootstrap percolation 
since the works of 
Bollob\'{a}s \cite {Bol68}
and 
Balogh, Bollob{\'a}s, and Morris \cite {BBM12}. 
Bollob\'{a}s \cite {Bol68}
initiated the study of the minimal number of edges 
in a weakly $K_r$-saturated subgraph of 
$K_n$, identifying this number when 
$3\le r\le 6$. 
Alon \cite{Alo85}, 
Frankl \cite{Fra82}, 
Kalai \cite{Kal84,Kal85} and 
Lov\'{a}sz \cite{Lov77} (cf.\ Yu \cite{Yu93}) generalized the result
to all $r$. 
Later, Balogh, Bollob\'{a}s, Morris and Riordan \cite{BBMR12} 
obtained a hypergraph analogue of this classical result. See also 
\cite{AVZ25, BMTR21, FGJ13, Kal85, KMM21}
(and references therein) for the 
bipartite case, and 
\cite{ST23, Ter2025, Tu1992}
for the asymptotic behavior in the general case. 
Many other related problems have been studied. For example, 
there are several works on the running time 
(number of rounds until 
the $H$-dynamics stabilize); 
see, e.g., \cite {BKPS19,EJKL24,BPRS17,FMS25,GKP17,HL24,NR23}. 
Another line of research, initiated by Kor\'andi and Sudakov  \cite {KS17}, 
studies the minimal number of edges in a subgraph of $G\sim\Gnp$ such that the $H$-dynamics,  
restricted to the ``host graph'' $G$, eventually adds all the edges in $G$; 
see also  \cite{BMTRZ24,DHKSZ24,antonir2025does} for more recent progress in this direction.

As discussed above, Balogh, Bollob{\'a}s, and Morris \cite {BBM12}
initiated the study of threshold probabilities for $H$-percolation, 
primarily focusing on the case that $H=K_r$ is a clique. 
Some other types of template graphs $H$ have been 
studied in \cite{BBM12}
and elsewhere, e.g, in     \cite{BC22,BMTR21,BK24,BKK24};
however, a general understanding
for {\it all} $H$ (see Problem 1 in \cite{BBM12}) remains widely open. 

In closing, let us mention that a sharp Fuss--Catalan-type threshold, 
resembling Theorem \ref{T_pc}, was recently discovered in the context of 
hypergraph bootstrap percolation \cite {LP23}. In that work, the challenge 
in analyzing the counterparts of witness graphs was tackled by exterior algebraic shifting. 
We tried to apply similar methods in this work, via rigidity theory, but we were 
unsuccessful due to some fundamental obstructions; see Section \ref{S_rigid} below. 
On the other hand, it seems likely to us that the strategies presented in the current 
work are robust, and will be capable of giving a ``combinatorial" proof of the results 
in \cite{LP23}, and more importantly, useful in studying various general versions of graph 
and hypergraph bootstrap percolation, where a direct analysis of the 
dynamics is typically challenging.

\subsection{Outline}
In Section \ref{S_Discussion} we give a 
detailed overview of our proof strategy. 
Afterward, in Section \ref{S_UB} we prove the upper bound
in Theorem \ref{T_pc}, and in Sections \ref{S_LB}--\ref{S_sharpLB}
we prove the lower bound. 
In Section \ref{S_LB} we set up terminology 
for the various types of steps that can be taken in the construction 
of general witness graphs, and prove several
fundamental properties of such graphs
that will be used in the proof of the lower bound. 
Section \ref{S_LBuptoC}
establishes a coarse lower bound 
$p_c=\Theta(n^{-1/\lambda})$, 
which already solves Problem 3 in \cite{BBM12}. 
Building on these arguments, and adding several more
new ideas, we find the sharp lower bound in 
Section \ref{S_sharpLB}. 
Finally, in Section \ref{S_subexp}
we prove Theorem \ref{T_sub_density}
on the addition of edges in the
subcritical setting.

\setcounter{tocdepth}{1}
\tableofcontents

\section{Context and proof overview}
\label{S_Discussion}

 The purpose of this section is to explain why and in what ways the 
 $K_r$-dynamics for $r\ge 5$ is different and more complex than when $r=4$,  
 and to outline the main ideas behind our proofs in this setting.

\subsection{Nucleation in the $K_r$-dynamics}
\label{S_nuce}
As observed in \cite[\S4]{BBM12},  
any graph that is $K_4$-percolating can be decomposed into two or three
percolating subgraphs. Therefore, $K_4$-percolating graphs
have percolating subgraphs of all orders. 
A similar 
property, for classical  
bootstrap percolation on Euclidean lattices, 
was first observed by 
Aizenman and Lebowitz \cite{AL88}, and analogues of this property
have since been used in various other contexts. 

The simplest $K_4$-percolating graphs, that are grown recursively, 
by ``nucleation,''
were already recognized in  \cite[Fig.\ 4]{Bol68}. 
As discussed in Section \ref{S_Intro},  to construct such a graph, 
we start with a single edge and then iteratively add vertices to the graph
with two neighbors amongst those previously added. 
The growth of such a graph in $\Gnp$, started from one of its edges, gives rise to 
time-inhomogeneous branching process with an
increasing birthrate (since, as this graph grows, it becomes more likely to find a vertex outside of it  with at least two neighbors inside). 
When $p=(\alpha n\log n)^{-1/2}$,  
survival to time $\alpha\log n$ creates a ``critical droplet'' (the branching process become critical), after which 
the full percolation of $\Gnp$ is all but inevitable. 
As it turns out, for $\alpha<3$, with high probability, the branching process started at {\it some} edge of $\Gnp$ becomes super-critical. 
All other ways of percolating are of smaller order, and the sharp threshold occurs 
at $p_c\sim(3n\log n)^{-1/2}$ \cite{BBM12,AK18,AK21,Kol22}. 
In other words, the most likely way for $\Gnp$ to $K_4$-percolate
is to have an edge $e\in E(\Gnp)$ whose vertices 
form a {\it contagious set} (i.e., a set whose initial infection leads to full infection) for 
the classical $2$-neighbor
bootstrap percolation process on $\Gnp$. 

For $r\ge 5$, one can similarly construct $K_r$-percolating graphs via ``nucleation'' 
by starting with a $K_{r-2}$ and then iteratively adding vertices connected to 
$r-2$ previously added vertices. In fact, 
the series of works \cite{Bol68,Alo85,Fra82,Kal84,Kal85,Lov77} 
have established that every $k$-vertex $K_r$-percolating graph has at least 
$(r-2)k-\binom {r-1}2$ edges, precisely the same number of 
edges as in these “nucleating’’ constructions.

One might expect that the growth of such a graph in $\Gnp$ 
is the cause for $K_r$-percolation. However, a simple union bound implies that, 
when $p$ is near $p_c=n^{-1/\lambda+o(1)}$, the graph $\Gnp$ typically 
contains no percolating subgraphs with $k$ vertices in the range $1\ll k\ll L$,
 where 
\begin{equation}\label{E_L}
L=(np^{r-2})^{-1/(r-3)}= n^{(r-4)/(r^2-r-4)+o(1)}.
\end{equation}
This indicates a qualitative change occurring at $r \ge 5$, as $\lambda = r - 2$ 
when $r = 4$, but $\lambda < r - 2$ for $r \ge 5$. Consequently, the 
Aizenman–Lebowitz-type property fails for $K_r$-percolation with $r \ge 5$; 
that is, there exist $K_r$-percolating graphs that do not contain percolating 
subgraphs of all smaller orders. This helps explain why $K_r$-percolating graphs 
are generally much less understood when $r \ge 5$.
In addition, \eqref{E_L} says that if a “critical droplet’’ for $K_r$-percolation in 
$\Gnp$ does form, it must appear rather abruptly (see Section~\ref{S_ncs}). 
Our proof does not identify such a droplet directly; instead, it takes a 
different approach based on the analysis of \textit{witness graphs}.

\subsection{The coarse bounds for $p_c$}
The concept of a witness graph plays a key role  
in \cite{BBM12}, 
where the coarse bounds \eqref{E_BBM12} are proved.
A witness graph  $W$ for an edge $e$ is an inclusion-minimal 
subgraph for which $e\in E(\langle W\rangle_{K_r})$; see Definiton \ref{D_WG} below. 
The existence of such a witness graph $W$ as a subgraph of $G$ bears witness 
to the fact that the $K_r$-dynamics starting from $G$ eventually add $e$ to the graph. 
As observed in \cite{BBM12}, witness graphs satisfy an Aizenman--Lebowitz-type property. 
Specifically (see Lemma \ref{L_AL} below), in each step of the 
$K_r$-dynamics, the maximal number of edges in a witness graph, 
over all witness graphs for 
edges added so far, 
can increase by at most a factor of ${r\choose2}$, 
the number of edges in $K_r$. 

The bounds \eqref{E_BBM12} are obtained as follows in \cite{BBM12}.
To obtain the upper bound in \eqref{E_BBM12}, the authors first show that, 
when $p$ is sufficiently supercritical, 
most edges in $K_n$ are added by a {\it $K_r$-ladder graph} 
(see Figure \ref{F_KrTs} below)
of logarithmic height, and then full percolation is achieved by sprinkling. 
Informally, such a graph is obtained by stringing together a series of 
copies of $K_r$, with its target edge $e$ in a copy at one of the ends, 
and then removing all 
shared edges and $e$. The lower bound in
\eqref{E_BBM12}, on the other hand, is obtained by 
first showing that
all witness graphs with $k$ vertices have at least as many edges 
$\lambda (k-2)+1$ 
(see also \cite[Lemma 12]{BK24} for an alternative proof of this fact, 
that extends to all graphs $H$) 
as there are in $K_r$-ladder graphs of the same size, 
and then applying the Aizenman--Lebowitz property for witness graphs.

\subsection{Sharp upper bound on $p_c$}
\label{S_introUB}
A limitation of the approach in \cite{BBM12} is that, apart from 
determining the minimal number of edges, it does not examine in 
detail the combinatorial structure of witness graphs. 
The first step in finding a sharp upper bound on $p_c$ is 
to identify the class of all edge-minimal witness graphs. 
As it turns out, these are 
$({r\choose2}-1)$-ary tree-like graphs that we call {\it $K_r$-tree
witness graphs (TWGs)}, and this explains the 
``Fuss--Catalan-related''
constant $\gamma$ in Theorem \ref{T_pc}. 
Informally, a TWG for an edge $e$
is obtained by first stringing together copies of 
$K_r$ in a tree-like manner,
so that $e$ is in one (root) copy, and every edge is in at most 
two. Then, the TWG is obtained by removing $e$ and all such 
shared edges from the graph. Note that these removed edges, 
and $e$ in particular, will be added back by the $K_r$-dynamics. 
See Figure \ref{F_KrTs} for examples.

\begin{figure}[h]
\centering
\includegraphics[scale=0.85]{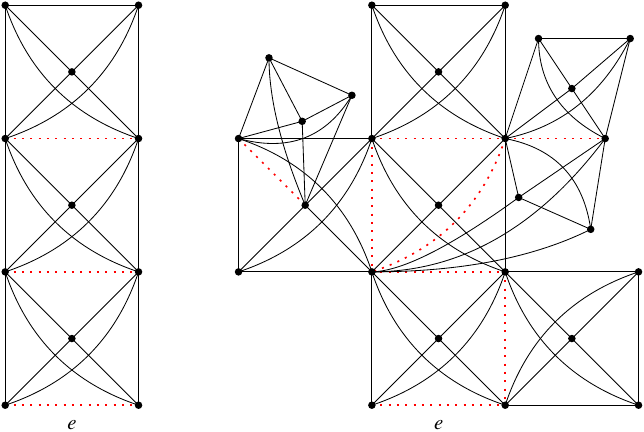}
\caption{A $K_5$-ladder (at left) of height 3 and a more complicated
$K_5$-tree (at right) of order 8. The non-edges of these $K_5$-trees that get 
added by the $K_5$-dynamics are indicated by red dotted lines. 
Their target edges
$e$ are labelled. 
}
\label{F_KrTs}
\end{figure}

To establish the upper bound in 
Theorem \ref{T_pc}
we show (see Section \ref{S_UB} below) that, in the supercritical regime, 
with high probability, all edges 
are eventually added to $\Gnp$ by the $K_r$-dynamics using a 
TWG of logarithmic ``order'' (number of copies of $K_r$ used in its construction). 
This is obtained by studying the expectation and variance of the number of 
TWGs for a given edge $e$ in $\Gnp$. The central idea is that every subgraph 
$S$ of a TWG $T$ for some $e$ has strictly smaller edge density than $T$, 
and moreover, the closer the density of $S$ is to that of $T$, the more closely 
its structure resembles a TWG for $e$.
The technicalities, however, are somewhat involved.

\subsection{Sharp lower bound on $p_c$}
\label{S_introLB}
The sharp lower bound in Theorem \ref{T_pc}
is more challenging, 
and it is here that much of the innovation in this work occurs. 
We obtain it by excluding the appearance of 
 {\it all} types of witness graphs for a given edge $e$. 
 As mentioned above, TWGs are the only edge-minimal witness graphs, 
 where a $k$-vertex TWG has $\lambda (k-2)+1$ edges. 
 Overall, our strategy is to show that a witness graph
with $k$ vertices and 
$\lambda (k-2)+1+\chi$ many edges is, in some useful sense, 
$O(\chi)$-close to being 
a TWG.
We call $\chi$ its {\it excess} number of edges. 

The {\it red edge algorithm (REA)}  introduced in \cite{BBM12} is 
a deterministic procedure that constructs a witness graph $W$ for a 
target edge $e$. At each step, the edges added so far form a collection 
of edge-disjoint components, 
and the algorithm introduces a new copy of $K_r$ that merges 
all components sharing edges with it.
In the final step, the copy of $K_r$ that is added includes the target edge $e$ 
and it merges all existing components into a single one.

There are three types of steps in the REA.
First, \textit{tree steps}, in which components are merged in a tree-like manner. 
For example, a witness graph is a TWG precisely when all its REA steps 
are tree steps.
Second, \textit{costly steps}, which increase the excess $\chi$ of the witness graph, 
typically by merging components in a non–tree-like way.
Third, \textit{zero-cost steps}, which take place within a single component 
and do not increase the excess.
While costly steps themselves are harmless
(since their impact is 
accounted for by the excess $\chi$), they may initiate cascades of zero-cost steps, 
potentially giving rise to a highly complicated structure within the witness graph. 
Bounding the spread of zero-cost steps in terms of the excess is far from trivial, 
and to address this problem, we introduce two key new ideas.

\subsubsection{The tree decomposition}

In Section \ref{S_LBuptoC}, 
we introduce the {\it tree decomposition} of a witness graph, 
and describe how it is constructed over time as the REA unfolds.
In each step, we decompose all existing components into a ``bad part'' 
and various other ``tree parts.''
These tree parts are similar to TWGs, but more general, 
as they may have multiple 
``target" edges that have yet to be reused by the REA. 
We do this carefully enough so that, on the one hand, 
any zero-cost spread of edges occurs only within the bad part, 
and so that, on the other hand, 
the number of inclusion-maximal tree parts that appear during the 
REA construction is bounded by $O(\chi)$.

We show that this decomposition 
is, on its own, enough to prove that $p_c=\Theta(n^{-1/\lambda})$, 
which already answers Problem 3 in \cite{BBM12}. 
We note that the assumption that $r\ge 5$ is heavily used here. 
Indeed, when $r=4$, a TWG itself creates a large zero-cost spread of edges 
(in fact, it turns into a clique), 
so it is impossible to distinguish between the 
bad part and the tree parts of a component. 
Indeed, when $r=4$, the lower bound
$\Omega(n^{-1/\lambda})$ fails, 
recalling that $p_c(n,K_4)\sim(3n\log n)^{-1/2}$.

\subsubsection{The $(r-2)_*$-bootstrap percolation process}
\label{S_*proc}

In Section~\ref{S_sharpLB}, to establish the sharp lower bound in 
Theorem~\ref{T_pc}, we refine our control 
on the spread of zero-cost edges 
within the various tree parts of the tree decomposition.
Using this refinement, we deduce that the tree parts are all close to being TWGs, 
and the sharp lower bound follows.

It is not hard to see that, when considered on its own, a tree part $T$ 
does not permit the propagation of zero-cost edges. However, if some of its 
vertices are involved in costly steps of the REA, this may cause additional 
edges to spread throughout $T$. 
Controlling this spread is the main objective 
in 
Section~\ref{S_sharpLB}. 

To this end, we make a comparison with a certain variation 
of the $(r-2)$-neighbor dynamics on the vertices of  $T$, which we call 
the {\it $(r-2)_*$-bootstrap percolation process}. 
The ``seeds'' (i.e., initially infected vertices) in this process are the vertices
of $T$ that are involved in costly steps. 
This process evolves like the usual $(r-2)$-dynamics, except 
for one special rule. 
Being a tree part, $T$ consists of copies of $K_r$ arranged so that each 
new copy intersects the union of the preceding ones in exactly one edge.
In the modified dynamics, we allow a vertex $v$ which belongs to such a 
shared edge $f$ to be infected at a lower threshold of $r-3$ if it has $r-4$ infected 
neighbors in one copy of $K_r$ containing $f$, and another infected neighbor in a 
different such copy, assuming that none of these neighbors is the other endpoint of $f$.

The close connection between the edge $K_r$-dynamics and the vertex $(r-2)_*$-dynamics 
follows from the observation that the propagation of new edges in the $K_r$-dynamics 
can “pass through’’ a shared edge $f$, from one part of $T$ to another only if there exists 
a vertex $v \in f$ satisfying the above condition.
Otherwise, the ``hinge'' (i.e.,  shared edge) $f$ in such a $T$ acts as 
a sort of ``bottleneck'' for the spread of new edges throughout $T$. 
See Section \ref{S_vBP} and Figure \ref{F_star_step} below for more details. 

Finally, even though the tree parts in our decomposition are not trees in the usual sense, 
they possess a workable tree-like structure. This allows us to analyze the 
$(r-2)_*$-bootstrap process on a tree part by extending the ideas 
in the work of Riedl \cite{Rie12} on the $r$-neighbor dynamics in trees.

\section{Sharp upper bound}
\label{S_UB}

In this section, we prove the sharp 
upper bound 
in Theorem \ref{T_pc}.

\begin{proposition}[Sharp upper bound]
\label{P_pcUB}
Fix $r\ge5$ and 
let $\eps>0$. Put  
\[
p=\left(\frac{1+\eps}{\gamma n}\right)^{1/\lambda}.
\]
Then, with high probability, 
$\langle\cG_{n,p}\rangle_{K_r}=K_n$. 
\end{proposition}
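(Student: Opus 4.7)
The plan is to use a second moment argument on the count of tree witness graphs (TWGs) for each edge of $K_n$. Fix an edge $e\in E(K_n)$, and let $X_e$ be the number of TWGs of order $m$ (i.e.\ built from $m$ copies of $K_r$) for $e$ that are contained in $\Gnp$, where the parameter $m$ will be taken of order $\log n$. Since the existence of a TWG for $e$ in $\Gnp$ forces $e$ to be eventually added by the $K_r$-dynamics, it suffices to show that $\P(X_e=0)=o(n^{-2})$ and then apply a union bound over the $\binom{n}{2}$ edges.

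The first step is to compute $\E[X_e]$. A TWG of order $m$ has $2+m(r-2)$ vertices (two of which are the endpoints of $e$) and $1+m(\binom{r}{2}-2)$ edges. Its combinatorial skeleton is a plane rooted tree on $m$ internal nodes, each with $\binom{r}{2}-1$ labeled child positions; the number of such trees is a Fuss--Catalan number of order $m$, which grows like $\alpha_d^m m^{-3/2}$ with $d=\binom{r}{2}-2$. Embedding into $K_n$ with the target edge $e$ fixed contributes roughly $n^{r-2}/(r-2)!$ per copy from the free vertices, after quotienting by the symmetries of each $K_r$. Combining these counts with the edge factor $p^{1+m(\binom{r}{2}-2)}$, using the identity $p^\lambda=(1+\eps)/(\gamma n)$, the per-copy contributions telescope via $(r-2)!\gamma^{r-2}=\alpha_d$ from \eqref{E_gamma} to give
\[
\E[X_e]=\Theta\!\left(\frac{p}{m^{3/2}}\right)(1+\eps)^{(r-2)m}.
\]
Choosing $m=\lceil C\log n\rceil$ with $C$ sufficiently large then makes $\E[X_e]$ polynomially large in $n$ with any prescribed exponent.

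Next, I would estimate the variance via the standard decomposition $\E[X_e^2]=\sum_{T,T'} p^{|E(T)\cup E(T')|}$, grouping pairs of TWGs by their common subgraph $S=T\cap T'$. Disjoint pairs contribute essentially $\E[X_e]^2$, so the task reduces to bounding the contribution of nonempty $S$. The key structural input is a density lemma for TWGs: any proper subgraph $S$ of a TWG satisfies $|E(S)|<\lambda(|V(S)|-2)+1$ strictly, and crucially the edge deficit grows quantitatively with how far $S$ is from being a disjoint union of sub-TWG fragments. Translating this strict inequality into powers of $np^\lambda=1+\eps$ shows that the contribution of every nontrivial common $S$ acquires a strictly subcritical factor; summing over all possible $S$ yields $\operatorname{Var}(X_e)=o(\E[X_e]^2/n^2)$. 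Chebyshev's inequality then gives $\P(X_e=0)=o(n^{-2})$, and the union bound over edges completes the proof.

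The main obstacle will be the variance estimate, which hinges on a sharp quantitative form of the density lemma together with a careful enumeration of the ways a common intersection $S$ can be completed into two TWGs rooted at $e$. This is precisely where the precise first-order constant $\gamma$ is unlocked, improving on the cruder $K_r$-ladder analysis in \cite{BBM12} that yielded the threshold only up to polylogarithmic factors.
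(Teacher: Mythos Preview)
Your overall plan---count TWGs of logarithmic order for each edge, show the count is positive with probability $1-o(n^{-2})$, and take a union bound---is exactly the paper's. But two specific steps fail as stated.

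First, the density claim is not strict in the way you need. The paper's structural lemma (Lemma~\ref{L_strTWB}) gives $\cE(S):=\lambda\sigma(S)-e(S)\ge 0$ for every proper partial TWG $S$, with \emph{equality} exactly when $S$ is obtained by deleting a single branch. For a general $k$-TWG this branch may carry almost all of the mass, leaving $\sigma(S)$ as small as $r-2$; such intersections then contribute only a factor $(1+\eps)^{-(r-2)}$ to $\Delta/\mu^2$, a constant rather than something polynomially small in $n$. The paper handles this by restricting to \emph{balanced} TWGs (all primal branches of equal order $q$), so that deleting any one branch still leaves $\sigma(S)\ge ck$, whence $(1+\eps)^{-\sigma(S)}\le n^{-c'}$ once $k=\beta\log n$ with $\beta$ large. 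Without this restriction your claimed bound $\mathrm{Var}(X_e)=o\!\left(\E[X_e]^2/n^2\right)$ is not available.

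Second, even after restricting to balanced TWGs, Chebyshev is too weak for the union bound. The $\cE(S)>0$ intersections contribute $O(n^{-c})$ to $\Delta/\mu^2$ with $c$ of order $1/((r-2)\lambda)<1$, so Chebyshev yields only $\P(X_e=0)=O(n^{-c})$, and summing over $\binom{n}{2}$ edges blows up. The paper applies Janson's inequality instead: from $1/\mu,\ \Delta/\mu^2\ll n^{-c}$ one gets $\P(X_e=0)\le\exp\!\left(-\mu^2/(\mu+\Delta)\right)\le\exp(-n^{c})$, which comfortably survives the union bound.
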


Throughout this section, we will use $c$ to denote a 
constant depending only on $r$. 
The specific value of $c$ may change, 
but it will always represent a constant 
depending only on $r$. We may also use $c'$, etc.,\ sometimes
to emphasize that its value is changing from one line to another.

\subsection{Growing TWGs}
\label{S_growTWGs}

Let us now formalize the notion of a TWG, 
as discussed briefly in Section \ref{S_introUB} above. 
In fact, we will give two recursive definitions 
of such structures, depending on whether it is more 
convenient to, in a certain sense (to be made precise below), 
grow the TWG from its {\it root}
or by {\it grafting a leaf} somewhere along one of its {\it branches}.

\begin{definition}[Growing a TWG from the root]
\label{D_TWGroot}
A graph $T$ is called a 
{\em tree witness graph (TWG)} for  the 
edge $e$ if either 
\begin{enumerate}
\item $T=e$; or else 
\item for some copy 
$H^{(e)}$ of $K_r$ 
with $e\in E(H^{(e)})$ there are TWGs 
$T_f$ for each edge $f\in E(H^{(e)}\setminus e)$
such that $T_f$ and $T_{f'}$ are vertex disjoint
outside of $H^{(e)}$ 
for all $f\neq f'\in E(H^{(e)}\setminus e)$, 
and $T=\bigcup_{f\in E(H^{(e)}\setminus e)}T_f$.
\end{enumerate}
In case (2) we call $H^{(e)}$ the {\it root} of $T$
and the $T_f$ its {\it primal branches}. 
\end{definition}

Next, we define the order of a TWG. 
Informally, this is the number of copies of $K_r$ 
in its associated hyper-tree.

\begin{definition}[Order of TWGs]
The {\it order} $\vartheta(T)$ of a TWG 
$T$ for $e$ is defined inductively. 
If $T$ consists of a single edge then $\vartheta(T)=0$, otherwise
\[
\vartheta(T)
=1+\sum_{f\in E(H^{(e)}\setminus e)}\vartheta(T_f),
\]
where $H^{(e)}$ is the root of $T$.
If $\vartheta(T)=k$, then we call $T$ a {\it $k$-TWG}. 
\end{definition}

\begin{definition}[Branches and leaves of TWGs]
\label{D_Bs}
Let $T$ be a TWG with root $H^{(e)}$. 
We call $B\subset T$ a {\it branch}
of $T$ if $B$ is a TWG. 
Inductively, $B$ is a branch of $T$ if $B=T$ or if $B$ is
a branch of one of its primal branches $T_f$. 
A branch $L$ of order $\vartheta(L)=1$ is 
called a {\it leaf} of $T$. 
\end{definition}

In particular, note that if $B$ is a branch of a TWG $T$,
then $B$ is a TWG for some edge $e'\notin E(T)$.

It will sometimes be more convenient to 
construct larger TWGs by removing an edge 
from a smaller TWG
and replacing it with a leaf. 
Hence, we also give the following equivalent definition of TWGs.

\begin{definition}[Grafting a leaf to a TWG]
\label{D_TWGleaf}
A graph $T$ is a $0$-TWG for an edge $e$ if $T=e$. 
For $k\ge1$, a graph $T$ is a $k$-TWG for $e$ 
if there exists 
\begin{itemize}
\item a $(k-1)$-TWG $T'$ for $e$; 
\item an edge $e'\in E(T')$; 
and 
\item a copy $H'$ of $K_r$ 
containing $e'$, and no other vertices
in $T'$ beside the endpoints of $e'$, 
\end{itemize}
such that $T$ is obtained from $T'$ by 
\begin{itemize}
\item removing the edge $e'$; and then 
\item {\it grafting} (taking the union with) the leaf $L$,  
obtained from $H'$ by removing $e'$, in its place. 
\end{itemize}
\end{definition}

Finally, we note that the following facts
can be proved by a simple induction, using either of
the above equivalent definitions.

\begin{lemma}\label{clm:kTWG_edges}
Suppose that $T$ is a $k$-TWG 
for an edge $e$. Then 
\begin{itemize}
\item $v(T)=(r-2)k+2$;  
\item $e(T)=\lambda(r-2)k+1$; and 
\item $e\in E(\langle T\rangle_{K_r})$. 
\end{itemize} 
\end{lemma}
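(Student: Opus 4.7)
The plan is a straightforward induction on $k$, where Definition \ref{D_TWGleaf} (grafting) is the more convenient one for keeping track of vertex and edge counts. The base case $k=0$ is immediate: $T=e$ has $v(T)=2=(r-2)\cdot 0+2$, $e(T)=1=\lambda(r-2)\cdot 0+1$, and $e\in E(T)\subseteq E(\langle T\rangle_{K_r})$.

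For the inductive step, write $T=(T'\setminus e')\cup L$ as in Definition \ref{D_TWGleaf}, where $T'$ is a $(k-1)$-TWG for $e$, $e'\in E(T')$, and the leaf $L$ shares with $T'$ only the two endpoints of $e'$. Grafting $L$ contributes $r-2$ new vertices (the vertices of $L$ other than the endpoints of $e'$) and changes the edge count by $\bigl(\tbinom{r}{2}-1\bigr)-1=\tbinom{r}{2}-2=\lambda(r-2)$, using the identity $\lambda(r-2)=\binom{r}{2}-2$ from \eqref{E_lambda}. Combined with the induction hypothesis, this yields $v(T)=(r-2)k+2$ and $e(T)=\lambda(r-2)k+1$.

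For the percolation part, the leaf $L\subseteq T$ is a copy of $K_r$ missing only the edge $e'$, so the $K_r$-dynamics add $e'$ in a single step; that is, $e'\in E(\langle T\rangle_{K_r})$. Since $T'\setminus e'\subseteq T$, it follows that $T'\subseteq \langle T\rangle_{K_r}$, and then monotonicity of the closure operator gives $\langle T'\rangle_{K_r}\subseteq\langle T\rangle_{K_r}$. By the induction hypothesis, $e\in E(\langle T'\rangle_{K_r})$, and therefore $e\in E(\langle T\rangle_{K_r})$, as required.

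I expect no real obstacle here; the argument is elementary once one chooses the right recursive description, and in fact the statement is essentially the \emph{reason} for defining TWGs in the way they are defined. The only minor subtlety is that the alternative Definition \ref{D_TWGroot} (building from the root) would lead to a slightly messier bookkeeping: expanding $T=\bigcup_f T_f$ with $f\in E(H^{(e)}\setminus e)$ requires tracking the overlaps at $H^{(e)}$, though one again finds that all the cross terms collapse because $\binom{r}{2}-1=\lambda(r-2)+1$. Either version of the induction works, but the grafting description keeps the arithmetic to a single increment per step.
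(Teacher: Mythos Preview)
Your proof is correct and follows essentially the same approach as the paper: the paper also proceeds by induction via the grafting definition (Definition~\ref{D_TWGleaf}), noting that each graft adds $r-2$ vertices and $\binom{r}{2}-2=\lambda(r-2)$ edges, and your argument for the percolation part via monotonicity of the closure is a bit more explicit than what the paper sketches.
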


In other words, if a graph $G$ contains a TWG $T$
for an edge $e$, then $T$ is indeed {\it witness} to the fact that 
$e\in E(\langle G\rangle_{K_r})$ is eventually
added by the $K_r$-dynamics. 
For the other two facts, 
simply note that $T$ contains the edge
$e$ and its two vertices, and that, each time a TWG
is grown by adding a leaf, $r-2$ vertices 
and $\lambda(r-2)={r\choose2}-2$ edges
are added.

\subsection{Counting TWGs}\label{SS_countingTWGs}

We recall that the {\it Fuss--Catalan number} 
\begin{equation}
\label{E_FC}
\mathrm{FC}_d(k)
=\frac{1}{dk+1} \binom{(d+1)k}{k}
\end{equation}
is the number of $(d+1)$-ary trees 
with $k$ internal nodes and $dk+1$ leaves. 
When $d=1$, $\mathrm{FC}_1(k)$ is the 
well-known {\it Catalan number}  
$C(k)$.

By Stirling's approximation,  
the Fuss--Catalan numbers  
have asymptotic growth rate
\[
\alpha_d
=\frac{(d+1)^{d+1}}{d^d}.
\] 
More specifically, for any fixed $d$, 
it follows by \eqref{E_FC}  that 
\begin{equation}
\label{E_FCasy}
\mathrm{FC}_d(k) 
\sim 
\beta_d\frac{\alpha_d^k}{k^{3/2}},
\end{equation}
as $k\to\infty$, 
where
\[
\beta_d=\sqrt{ \frac{d+1}{2\pi d^3}}. 
\]

Finally, we note that, similarly to the 
Catalan numbers, the 
Fuss--Catalan numbers can be defined 
recursively by setting 
$\mathrm{FC}_d(0)=1$ and, for $k\ge1$, 
\begin{equation}
\label{E_FCrec}
\mathrm{FC}_d(k)
=\sum_{k_1+\cdots+k_{d+1}=k-1}
\prod_{i=1}^{d+1}\mathrm{FC}_d(k_i).
\end{equation}

\begin{definition}
Let $t(k)$ denote the number of  (labelled)
$k$-TWGs
for a given edge $e$ on a given  vertex set
of size $(r-2)k+2$. 
\end{definition}

\begin{lemma}[Number of TWGs]
\label{lem:count_TWG}
For all integers $k\ge0$, we have that 
\begin{equation}\label{E_tk}
t(k)=\frac{((r-2)k)!}{(r-2)!^k} \mathrm{FC}_{\binom{r}{2} -2}(k). 
\end{equation}
\end{lemma}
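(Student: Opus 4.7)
The plan is to prove \eqref{E_tk} by induction on $k$, exploiting the fact that the root decomposition in Definition~\ref{D_TWGroot} mirrors precisely the Fuss--Catalan recursion \eqref{E_FCrec} with $d+1=\binom{r}{2}-1$, i.e., $d=\binom{r}{2}-2$. The base case $k=0$ is immediate: there is exactly one $0$-TWG, namely $e$ itself, and the right-hand side of \eqref{E_tk} evaluates to $1$ since $\mathrm{FC}_d(0)=1$.

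For the inductive step, fix a vertex set $V$ of size $(r-2)k+2$ containing the two endpoints of $e$. To assemble an arbitrary $k$-TWG on $V$ rooted at $e$, I would carry out three choices in order. First, select the $r-2$ vertices of $V\setminus e$ that, together with the two endpoints of $e$, form the root copy $H^{(e)}$ of $K_r$; this gives $\binom{(r-2)k}{r-2}$ options. Second, fix a composition $k_1+\cdots+k_{d+1}=k-1$ (one $k_i$ for each of the $d+1=\binom{r}{2}-1$ non-$e$ edges of $H^{(e)}$, which are pairwise distinguishable) and distribute the remaining $(r-2)(k-1)$ vertices into ordered blocks of sizes $(r-2)k_1,\ldots,(r-2)k_{d+1}$, contributing the multinomial coefficient $\binom{(r-2)(k-1)}{(r-2)k_1,\ldots,(r-2)k_{d+1}}$. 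Third, for the $i$-th non-$e$ edge $f_i$ of $H^{(e)}$, choose a $k_i$-TWG for $f_i$ on the $(r-2)k_i+2$ vertices consisting of the two endpoints of $f_i$ and the $i$-th block, contributing $\prod_i t(k_i)$ by the inductive hypothesis. These choices are in bijection with $k$-TWGs for $e$ by Definition~\ref{D_TWGroot}.

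Substituting $t(k_i)=((r-2)k_i)!/(r-2)!^{k_i}\cdot\mathrm{FC}_d(k_i)$, the factors $\prod_i ((r-2)k_i)!$ in the denominator of the multinomial cancel against the corresponding factors from $\prod_i t(k_i)$, leaving $((r-2)(k-1))!/(r-2)!^{k-1}\cdot\prod_i\mathrm{FC}_d(k_i)$. Summing over all compositions of $k-1$ invokes exactly \eqref{E_FCrec} and yields an overall $\mathrm{FC}_d(k)$. Multiplying by $\binom{(r-2)k}{r-2}$ and simplifying
\[
\binom{(r-2)k}{r-2}\cdot\frac{((r-2)(k-1))!}{(r-2)!^{k-1}}=\frac{((r-2)k)!}{(r-2)!^{k}}
\]
gives the desired formula. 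There is no real obstacle here; the only thing that requires a moment of care is the labelling convention, namely that the primal branches are distinguishable (indexed by the $\binom{r}{2}-1$ non-$e$ edges of $H^{(e)}$), so the product over branches is ordered and the multinomial coefficient is the correct one to use, exactly as in \eqref{E_FCrec}.
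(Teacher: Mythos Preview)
Your proof is correct and follows essentially the same approach as the paper: induction on $k$ via the root decomposition of Definition~\ref{D_TWGroot}, choosing the root vertices, distributing the remaining vertices among the $\binom{r}{2}-1$ primal branches via a multinomial, and then invoking the Fuss--Catalan recursion~\eqref{E_FCrec}. Your remark that the primal branches are distinguishable (being indexed by the non-$e$ edges of $H^{(e)}$) is a useful clarification that the paper leaves implicit.
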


Note that, by \eqref{E_gamma}, \eqref{E_FCasy}
and \eqref{E_tk}, it follows that 
\begin{equation}\label{E_tkasy} 
t(k)\sim 
\beta_{{r\choose2}-2}\frac{\gamma^{(r-2)k}}{k^{3/2}}((r-2)k)!,
\end{equation}
as $k\to\infty$.

\begin{proof}
The proof is by induction on $k$, combining 
Definition \ref{D_TWGroot}
with the recurrence \eqref{E_FCrec}. 

The base case $k=0$ is trivial, since $t(0)= 1$. 
On the other hand, 
for $k>0$, 
to construct a $k$-TWG $T$ for $e$, 
we need to choose 
\begin{enumerate}
\item the other $r-2$ vertices 
in the root $H^{(e)}$ of $T$; 
\item the orders $k_f=\vartheta(T_f)$  
for each of its ${r\choose2}-1$ many primal branches $T_f$, 
in such a way that $\sum_f k_f=k-1$; 
\item a partition $(\Pi_f)_f$ of the remaining $(r-2)(k-1)$ vertices 
outside of $H^{(e)}$, 
in such a way that each $|\Pi_f|=(r-2)k_f$; and 
\item a $k_f$-TWG for each primal branch $T_f$.
\end{enumerate}
As such, it follows that 
\[
t(k)
= {(r-2)k\choose r-2}
\sum_{\sum_f k_f=k-1} 
((r-2)(k-1))!
\prod_{f} \frac{t(k_f)}{((r-2)k_f)!}. 
\]
Consequently, the sequence 
\[
\frac{(r-2)!^k}{((r-2)k)!}t(k)
\] 
satisfies \eqref{E_FCrec}
with $d={r\choose2}-1$, 
and the result follows. 
\end{proof}

\subsection{Balanced TWGs}
The following concept is needed for technical 
reasons that will appear later in the proof of 
Proposition \ref{P_pcUB}.

\begin{definition}[Balanced TWGs]
Let $T$ be a TWG for an edge $e$,  
with root $H^{(e)}$, and primal branches $T_f$, 
as in Definition \ref{D_TWGroot}. 
We say that $T$ is {\it balanced}
if the orders $\vartheta(T_f)$ are all equal. 
\end{definition}

Note that this definition is {\it not} recursive. 
That is, crucially, the primal branches $T_f$ 
themselves need not be balanced.

\begin{definition}
Let $b(k)$ denote the number of (labelled) 
balanced $k$-TWGs
for a given edge $e$ on a given  vertex set
of size $(r-2)k+2$. 
\end{definition}

Note that, if 
$T$ is balanced then $k-1$ is 
divisible by $\binom r2 -1$, 
in which case we put
\begin{equation}\label{E_q}
q=\frac{k-1}{\binom r2 -1}, 
\end{equation}
so that all of its primal branches 
are of order $\vartheta(T_f)=q$.

\begin{lemma}[Number of balanced TWGs]
\label{L_count_bTWG}
For all integers $k\ge0$
such that $k-1$ is divisible by $\binom r2 -1$, 
we have that 
\begin{equation}\label{E_bk}
b(k)
= 
\frac{((r-2)k)!}{(r-2)!}
\left(\frac{t(q)}{((r-2)q)!}\right)^{\binom r2 -1}.
\end{equation}
\end{lemma}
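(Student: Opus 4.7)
The plan is to count balanced $k$-TWGs by a direct enumeration that follows the top-down recursive construction in Definition \ref{D_TWGroot}, much as in the proof of Lemma \ref{lem:count_TWG}, but with the simplification that all primal branches are forced to have the same order. Let $T$ be a balanced $k$-TWG for $e$ on a fixed vertex set of size $(r-2)k+2$. Since $T$ is balanced and its root $H^{(e)}$ contributes $\binom{r}{2}-1$ primal branches whose orders sum to $k-1$, each branch must have order exactly $q = (k-1)/(\binom{r}{2}-1)$. So there is no longer any freedom in choosing the order partition in step (2) of the proof of Lemma \ref{lem:count_TWG}.

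I would then build $T$ through the following choices. First, choose the $r-2$ vertices outside $e$ that belong to the root $H^{(e)}$: this gives a factor of $\binom{(r-2)k}{r-2}$. Next, distribute the remaining $(r-2)(k-1)$ vertices into an ordered partition $(\Pi_f)_{f \in E(H^{(e)} \setminus e)}$ with $|\Pi_f| = (r-2)q$ for each of the $\binom{r}{2}-1$ edges $f$ of $H^{(e)} \setminus e$; the partition is ordered because different branches are labelled by different edges $f$. This contributes a multinomial factor of $((r-2)(k-1))! / ((r-2)q)!^{\binom{r}{2}-1}$. Finally, on each block $\Pi_f \cup f$ choose an arbitrary $q$-TWG rooted at $f$, contributing $t(q)^{\binom{r}{2}-1}$. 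Note that at this last stage we impose no balanced condition on the primal branches themselves, exactly as required by the (non-recursive) definition of ``balanced.''

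Multiplying these factors and simplifying $\binom{(r-2)k}{r-2} \cdot ((r-2)(k-1))! = ((r-2)k)!/(r-2)!$ yields
\[
b(k) = \frac{((r-2)k)!}{(r-2)!} \left(\frac{t(q)}{((r-2)q)!}\right)^{\binom{r}{2}-1},
\]
which is the claimed identity. I do not anticipate any real obstacle here: the argument is a straightforward bookkeeping exercise, and the only thing to be careful about is that the primal branches are labelled by the edges of $H^{(e)} \setminus e$ (so the partition is ordered) and that the balanced condition is applied only at the root and not inherited by the sub-TWGs.
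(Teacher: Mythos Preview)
Your proposal is correct and follows essentially the same approach as the paper's own proof: choose the $r-2$ root vertices, distribute the remaining $(r-2)(k-1)$ vertices into $\binom{r}{2}-1$ ordered blocks of size $(r-2)q$, and place an arbitrary $q$-TWG on each. The paper's write-up is terser but the counting is identical, including the observation that the balanced condition applies only at the root.
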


\begin{proof}
To construct a balanced $k$-TWG $T$ 
for a given edge $e$ on a given  vertex set
of size $(r-2)k+2$, we need to choose 
\begin{enumerate}
\item the $r-2$ other vertices in the 
root $H^{(e)}$; 
\item a partition of the $(r-2)(k-1)$ 
remaining vertices into 
$\binom r2 -1$ equal sets of size $(r-2)q$; and
\item $\binom r2 -1$ TWGs of order $q$
for the primal branches. 
\end{enumerate}
Hence
\[
b(k)
= 
\frac{((r-2)k)!}{(r-2)!((r-2)q)!^{\binom r2 -1}}
t(q)^{\binom r2 -1},
\]
giving the result. 
\end{proof}

The key point is 
that the difference between $b(k)$ and $t(k)$ 
is not seen at the exponential scale.
Indeed, 
by \eqref{E_tkasy} and \eqref{E_bk} it follows that 
\begin{equation}\label{E_bkasy}
b(k)
\sim
\phi
\frac{t(k)}{k^{(3/2)\left(\binom r2 -2\right)}},
\end{equation}
where 
\[
\phi=
\frac{\left(\binom r2 -1\right)^{3/2} \beta_{{r\choose2}-2}^{\binom r2 -2}}
{(r-2)!\gamma^{r-2} }
\]
is a constant depending only on $r$. 

Finally, the last result in this section
implies that in the supercritical setting, 
we have, in expectation, many balanced $k$-TWGs
for any given edge $e$ for $k=\beta\log n$, 
provided that $\beta$ is sufficiently large.

\begin{lemma}\label{clm:mu}
Fix $e\in E(K_n)$. Let $\eps>0$ and put   
\[
p=\left(\frac{1+\eps}{\gamma n}\right)^{1/\lambda}.
\]
Suppose that $k=k(n)$ is a sequence such that 
$k-1$ is divisible by $\binom r2 -1$ for all $n$, 
and that 
$1\ll k\ll \sqrt{n}$, as $n\to\infty$.
Then, as $n\to\infty$, the number $B_k^{(e)}$
of balanced $k$-TWGs for $e$ in $\cG_{n,p}$
satisfies 
\[
\E(B_k^{(e)})\sim 
\phi'   \frac{(1+\eps)^{(r-2)k}}{k^{(3/2)
\left(\binom r2 -1\right)}}p,
\]
where
\[
\phi'=\beta_{{r\choose2}-2} \phi.
\]
\end{lemma}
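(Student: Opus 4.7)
The plan is a direct first-moment calculation, combining the exact count $b(k)$ from Lemma \ref{L_count_bTWG} with the asymptotic formulas \eqref{E_bkasy} and \eqref{E_tkasy}, together with the defining relation \eqref{E_gamma} for $\gamma$. First I would apply linearity of expectation in the obvious way: to specify a balanced $k$-TWG for $e$ inside $\cG_{n,p}$, I choose the $(r-2)k$ non-endpoint vertices in $\binom{n-2}{(r-2)k}$ ways, pick one of the $b(k)$ labelled balanced $k$-TWGs on the resulting vertex set with target edge $e$, and require all $\lambda(r-2)k+1$ edges of the resulting graph to be present, which happens with probability $p^{\lambda(r-2)k+1}$. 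This yields the exact identity
\[
\E(B_k^{(e)})
= \binom{n-2}{(r-2)k}\, b(k)\, p^{\lambda(r-2)k+1}.
\]

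Next I would approximate the binomial coefficient. Since $r$ is fixed and $k \ll \sqrt n$, we have $(r-2)k = o(\sqrt n)$, so the standard falling-factorial bound gives $\binom{n-2}{(r-2)k}\sim n^{(r-2)k}/((r-2)k)!$. Substituting the definition of $p$, one immediately gets
\[
n^{(r-2)k}\,p^{\lambda(r-2)k}
= \left(\frac{1+\eps}{\gamma}\right)^{(r-2)k},
\]
so everything reduces to understanding $b(k)/((r-2)k)!$ up to the overall factor $p$ and the power $((1+\eps)/\gamma)^{(r-2)k}$.

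For the latter, I would invoke \eqref{E_bkasy} and \eqref{E_tkasy} in succession. Combining them,
\[
\frac{b(k)}{((r-2)k)!}
\sim
\phi\,\beta_{\binom{r}{2}-2}\,
\frac{\gamma^{(r-2)k}}{k^{(3/2)(\binom{r}{2}-1)}},
\]
where the exponent $(3/2)(\binom{r}{2}-1)$ of $k$ in the denominator arises as $3/2 + (3/2)(\binom{r}{2}-2)$, picking up one factor of $k^{3/2}$ from \eqref{E_tkasy} and $k^{(3/2)(\binom{r}{2}-2)}$ from \eqref{E_bkasy}. Multiplying the three pieces and observing that $\gamma^{(r-2)k}$ cancels against $\gamma^{-(r-2)k}$ coming from $n^{(r-2)k}p^{\lambda(r-2)k}$, the claimed
\[
\E(B_k^{(e)})
\sim \phi'\,
\frac{(1+\eps)^{(r-2)k}}{k^{(3/2)(\binom{r}{2}-1)}}\,p
\]
emerges with $\phi' = \beta_{\binom{r}{2}-2}\,\phi$, as desired.

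The computation is routine and there is no genuine obstacle; the only mild point worth verifying is that the approximation $\binom{n-2}{(r-2)k}\sim n^{(r-2)k}/((r-2)k)!$ holds uniformly over the range of $k$, which is exactly why the hypothesis $k \ll \sqrt n$ is imposed. All remaining manipulations are algebraic consequences of \eqref{E_gamma}, \eqref{E_tkasy}, and \eqref{E_bkasy}.
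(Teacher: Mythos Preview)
Your proof is correct and follows essentially the same approach as the paper: both write $\E(B_k^{(e)}) = \binom{n-2}{(r-2)k}\, b(k)\, p^{\lambda(r-2)k+1}$, use $k\ll\sqrt n$ to approximate the binomial coefficient by $n^{(r-2)k}/((r-2)k)!$, and then substitute the asymptotics \eqref{E_tkasy} and \eqref{E_bkasy} together with $np^\lambda=(1+\eps)/\gamma$.
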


\begin{proof}
By linearity of expectation, 
\[
\E(B_k^{(e)}) = \binom {n-2}{(r-2)k}  
b(k)  p^{\lambda(r-2)k+1}.
\]
Since $1\ll k\ll \sqrt{n}$, we have that 
\[
\binom{n-2}{(r-2)k}\sim \frac{n^{(r-2)k}}{((r-2)k)!}. 
\] 
Therefore, by
\eqref{E_tkasy} and \eqref{E_bkasy},
we find that 
\[
\E(B_k^{(e)})\sim 
\beta_{{r\choose2}-2} \phi 
\frac{(\gamma np^\lambda)^{(r-2)k}}
{k^{(3/2)\left(\binom r2 -1\right)}}p,
\]
and the result follows.
\end{proof}

\subsection{Partial TWGs}
\label{S_SofT}

The next key step towards the proof of 
Proposition \ref{P_pcUB}
is studying subgraphs of TWGs.

\begin{definition}[Partial TWGs]
We call a subgraph $S$ a {\it partial TWG} 
for an edge $e$
if there exists a TWG $T$ for $e$ such that 
$S\subsetneq T$ is a proper edge-induced subgraph of $T$.
\end{definition}

We will need to quantify, at least approximately, 
how far a partial TWG $S$ is from being a complete TWG. 
In the case that $S$ is the intersection 
of two potential TWGs $T_1$ and $T_2$, 
this information will help
bound the correlation
between the events that $T_1$ and $T_2$
are present in $\cG_{n,p}$.  

For a graph $G$ and some $v\in V(G)$, 
we let $E_v(G)\subset E(G)$
denote the edges of $G$ that are incident to $v$.

\begin{definition}[Sets $F$, $P$ and $D$]
Let $T$ be a TWG for an edge $e$. Suppose that
$S\subsetneq T$ is a partial TWG for $e$.
We partition the set of vertices in $T$ 
that are not in $e$ into the
sets $F$, $P$, and $D$ of vertices $v$ 
for which $E_v(T)$ is {\it fully} contained in, 
{\it partially} contained in, and {\it disjoint} from $E(S)$. 
\end{definition}
In other words, for each such $v\notin e$, we put 
\begin{itemize}
\item $v\in F$ if $E_v(T)\subset E(S)$; 
\item $v\in D$ if $E_v(T)\cap E(S)=\emptyset$; and 
\item $v\in P$ otherwise. 
\end{itemize}
Note that  that $F$, $P$
and $D$ are pairwise disjoint since $S$ is edge induced and thereby has
no isolated vertices.

To be precise, we can write 
$F=F(S,T)$,
$P=P(S,T)$ and  $D=D(S,T)$
as all three sets depend on $S$ and $T$. 
However, the set $F\cup P=V(S)\setminus e$
depends only on $S$, and we denote 
\[
\sigma(S)=|V(S)\setminus e|
\]
the number of vertices in $S$ that are not in $e$.

Finally, let us 
introduce the three key parameters
associated with $S\subsetneq T$
which will help us measure how far 
$S$ is from being a TWG.

\begin{definition}[Key partial TWG parameters]
\label{D_keypar}
We define the 
\begin{itemize}
\item {\it edge efficiency} of $S$ as 
$\cE(S)=\lambda\sigma(S)-e(S)$;
\item {\it within tree deficiency} of $S$ 
as $\cD(S,T)=|P|$; and
\item {\it tree extendability} $\cT(S)$ of $S$
to be the minimum of $\sigma(T_*)-\sigma(S)$ over all
TWGs $T_*$ for $e$ 
satisfying $S\subsetneq T_*$.
\end{itemize}
\end{definition}

As indicated by the notation, 
$\cD$ depends on $S$ and $T$, but 
$\cE$ and 
$\cT$
are, in fact, 
independent of $T$.

\subsection{Structure of partial TWGs}
The goal of this subsection is to establish the following lemma 
that connects between the three key partial TWG parameters.
\begin{lemma}[Partial TWG structural lemma]
\label{L_strTWB}
Let $T$ be a TWG and $S\subsetneq T$ a partial TWG. Then, 
\begin{enumerate}
\item $\cE(S)\ge 0$, with equality 
if and only $S$ is obtained from $T$ 
by removing one of its branches; 
\item $\cD(S,T)\le c\cE(S)+2$; and 
\item $\cT(S)\le c\cE(S)$, 
\end{enumerate}
for some constant $c$ depending only on $r$. 
\end{lemma}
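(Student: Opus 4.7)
My plan is to prove all three parts simultaneously by structural induction on the order $k$ of $T$, using the recursive decomposition from Definition \ref{D_TWGroot}. For the inductive step on a $k$-TWG $T$ with root $H^{(e)}$ and primal branches $T_{f_1},\ldots,T_{f_m}$, where $m = \binom{r}{2} - 1$, the branches are edge-disjoint and satisfy $V(T_{f_i}) \cap V(H^{(e)}) = \{u_{f_i}, v_{f_i}\}$ with the $T_{f_i}$'s pairwise vertex-disjoint outside $V(H^{(e)})$. Setting $S_i := S \cap T_{f_i}$, $R_v := V(S) \cap V(H^{(e)})$, and writing $\cE_{f_i}$ for the edge efficiency measured relative to the target $f_i$ of $T_{f_i}$, these facts yield the key additive identity
\[
\cE(S) = \lambda\,|R_v \setminus e| + \sum_{i=1}^{m} \cE_{f_i}(S_i),
\]
and since each $T_{f_i}$ has order strictly smaller than $k$, the inductive hypothesis applies to each $S_i \subsetneq T_{f_i}$.

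For part (1), I would split $\{1,\ldots,m\}$ into $A := \{i : S_i = T_{f_i}\}$, on which $\cE_{f_i}(S_i) = -1$, and $A^c$, on which induction gives $\cE_{f_i}(S_i) \ge 0$. This gives $\cE(S) \ge \lambda\,|R_v \setminus e| - |A|$; each $i \in A$ forces $\{u_{f_i}, v_{f_i}\} \subseteq R_v$, and a direct count on the subgraph of $K_r \setminus e$ spanned by $\{f_i : i \in A\}$, using $A \subsetneq \{1,\ldots,m\}$ (from $S \ne T$) together with $r \ge 5$, shows $\lambda\,|R_v \setminus e| \ge |A|$, so $\cE(S) \ge 0$. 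The equality case $\cE(S) = 0$ forces $S_i = T_{f_i} \setminus B_i$ for each $i \in A^c$ (by induction) together with saturation of the root inequality; tracing through, the only configurations that realize this have $|A| = m - 1$ with the exceptional index $j$ giving either $S_j = \emptyset$ or $S_j = T_{f_j} \setminus B$ for a nonempty sub-branch $B$, which in either case means $S = T \setminus B$ for a single branch $B$ of $T$.

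For parts (2) and (3), I would again use this decomposition, exploiting that $P \cap (V(T_{f_i}) \setminus V(H^{(e)}))$ coincides with the analogously-defined $P_{f_i}$ of the induced subproblem on $T_{f_i}$ (which vanishes when $S_i \in \{\emptyset, T_{f_i}\}$). For part (2), the induction gives $|P_{f_i}| \le c\,\cE_{f_i}(S_i) + 2$; combined with a bound on $|P \cap V(H^{(e)})|$ and, crucially, the fact that vertices shared between a removed branch $B_i$ and the root get counted in $P \cap V(H^{(e)})$ but not in $P_{f_i}$ (avoiding double-counting), summing and using the identity together with part (1) amortizes everything into $\cD(S,T) \le c\,\cE(S) + 2$. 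For part (3), I would assemble $T_* \supsetneq S$ greedily: set $T_{*,i} := T_{f_i}$ for $i \in A$, $T_{*,i} := \{f_i\}$ when $S_i = \emptyset$, and use the inductive TWG completion for the remaining branches; re-gluing through $H^{(e)}$ yields a TWG for $e$ with $\sigma(T_*) - \sigma(S) \le (r - 2 - |R_v \setminus e|) + \sum_i c\,\cE_{f_i}(S_i)$, bounded by $c'\,\cE(S)$ via the identity and the root-coverage inequality from part (1).

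The main obstacle I anticipate is the careful bookkeeping needed in parts (2) and (3) to amortize the $+2$ constants (one per branch) and the root term $r - 2 - |R_v \setminus e|$ into a single constant $c$ depending only on $r$, in particular handling the case where a removed sub-branch's target edge is incident to the root of $T$ so that one of its endpoints is ``absorbed'' into $R_v$ rather than contributing to $P_{f_i}$. The extremal case $|R_v \setminus e| = 0$ may require a separate direct argument, using that $r \ge 5$ forces every vertex of $T$ to have degree at least $r - 1$, so that isolating root vertices requires a correspondingly large $\cE$-budget which supplies the necessary amortization.
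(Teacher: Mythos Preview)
Your root-decomposition approach is genuinely different from the paper's, which instead inducts via the \emph{leaf} description (Definition~\ref{D_TWGleaf}): write $T$ as a $(k-1)$-TWG $T'$ with one edge $e'$ replaced by a leaf $L\cong K_r\setminus e'$, and case-split on how $S$ meets $L$ (empty, complete, nearly-complete, intermediate), using Lemma~\ref{L_cEL} to control the $\cE$-increment within $L$.

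For part~(1) your approach is sound and arguably cleaner. The additive identity $\cE(S)=\lambda|R_v\setminus e|+\sum_i\cE_{f_i}(S_i)$ is correct, and the inequality $\lambda|R_v\setminus e|\ge|A|$ is exactly Lemma~\ref{L_SB} applied to the subgraph of $K_r\setminus e$ spanned by $\{f_i:i\in A\}$ (which is proper since $A\neq\{1,\dots,m\}$). The equality analysis is essentially right, though you omit the degenerate sub-case $|A|=0$, $R_v\subseteq e$, which forces every $S_i=\emptyset$ and hence $S=\emptyset=T\setminus T$.

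For parts~(2) and~(3), however, your scheme does not close as stated, and the difficulty is exactly the one you flag. In part~(2), applying the inductive bound to each of the $m=\binom{r}{2}-1$ branches produces up to $m$ copies of the additive constant $+2$, plus a root contribution, while the compensating term $\lambda|R_v\setminus e|-|A|$ can vanish (take $|A|=m-1$, so $\lambda|R_v\setminus e|=|A|$) even when several branches satisfy $\cE_{f_i}(S_i)=0$ with $S_i\notin\{\emptyset,T_{f_i}\}$; the crude bound $|P\cap(V(H^{(e)})\setminus e)|\le |R_v\setminus e|$ is then far too weak. One would need a much finer accounting of which root vertices actually lie in $P$ and how this interacts with the branch contributions. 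In part~(3), the case $|R_v\setminus e|=0$ with $S\neq\emptyset$ breaks your greedy construction outright: re-gluing through $H^{(e)}$ costs $r-2$ new vertices, but $\cE(S)=\sum_i\cE_{f_i}(S_i)$ can be arbitrarily small positive. The paper handles the analogous situation by a \emph{relabeling} trick (its Cases~3bii and~3cii): when $S$ meets the leaf $L$ only in an isolated edge $f$ and misses one or both endpoints of $e'$, it builds $T_*$ from the inductive $\bar T_*$ by \emph{replacing} $e'$ with $f$ rather than grafting $L$, adding zero new vertices. Your plan has no analog of this, and the degree argument you propose does not supply one. The leaf-based induction is better suited here precisely because only one copy of $K_r$ is in play at each step, so increments in $\cD$ and $\cT$ are $O(1)$ and can be matched one-to-one against increments in $\cE$ via Lemma~\ref{L_cEL}.
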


Recall that a branch $B\subset T$ is a TWG for some $e'\not\in E(T)$. 
When {\it removing} $B$ from $T$, we do {\it not}, 
however, mean to remove the 
two vertices in $e'$ from what remains. 

The proofs of these results,  
in Sections \ref{S_strTWB1}, \ref{S_strTWB2} 
and \ref{S_strTWB3} below, 
are by a somewhat technical case analysis. 
The reader might wish to first skip
ahead to 
Sections \ref{S_EBs} and \ref{S_pcUB}
to see how it all fits together, before
returning to the proof of this lemma. 

Parts (1) and (2) are closely related, 
and the proof of (2) is essentially a 
continuation of the arguments for (1). 
Part (3), on the other hand, 
requires a different type of case analysis.

\subsubsection{Strictly-balanced graphs}
The proof of Lemma \ref{L_strTWB} makes critical use of the 
fact that the clique $K_r$ is strictly balanced for $r\ge5$.
To highlight this fact, we present Lemma \ref{L_SB} below in the 
broader setting of strictly balanced graphs; Lemma \ref{L_cEL}, 
concerning the clique $K_r$ for $r\ge5$, 
will follow as a special case useful in our arguments.

\begin{definition}[Strictly balanced]
\label{D_SB}
A graph $H$ is {\it strictly balanced} if 
\begin{equation}\label{E_SB}
\frac{e(F)-1}{v(F)-2} < \lambda(H),
\end{equation}
for every subgraph $F\subset H$ 
with $3\le v(F)<v(H)$, 
where
\begin{equation}\label{E_lamH}
\lambda(H)=\frac{e(H)-2}{v(H)-2}
\end{equation}
is its {\it $\lambda$-density} 
(extending \ \eqref{E_lambda} above). 
\end{definition}

\begin{definition}
\label{D_xi}
For a strictly-balanced graph $H$, 
we let $\xi(H)>0$ denote the minimal
possible value of 
\[
\lambda(H)(v(F)-2)-(e(F)-1)
\]
over all $F\subset H$ with $3\le v(F)<v(H)$. 
\end{definition}

\begin{lemma}
\label{L_SB}
Suppose that $H$ is a strictly balanced graph
and that $K$ is a proper subgraph 
of the graph $H\setminus e$,  
obtained from $H$ by 
removing some $e \in E(H)$. 
Let $x$ denote the number of vertices in 
$K$ that are not in $e$. 
Then we have that $0\le x\le v(H)-2$
and 
\[
e(K) \le \lambda(H)x, 
\]  
with equality if and only if 
either 
\begin{enumerate}
\item $x=0$ and so $e(K)=0$; or else
\item $x=v(H)-2$ and $e(K)=e(H)-2$.
\end{enumerate}
In all other cases, in fact, 
\[
e(K) \le \lambda(H) x-\xi(H),
\] 
where $\xi(H)>0$ is as in 
Definition \ref{D_xi}. 
\end{lemma}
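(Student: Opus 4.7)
The plan is to run a case analysis organized around the number $v_e(K) := |V(K) \cap e| \in \{0,1,2\}$ of endpoints of $e$ lying in $V(K)$, combined with the boundary values of $x$. The range $0 \le x \le v(H)-2$ is immediate from $V(K) \setminus e \subseteq V(H) \setminus e$, so the real content of the lemma is the bound on $e(K)$ together with the characterization of equality.

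I would first dispatch the two stated equality cases. If $x=0$, then $V(K) \subseteq e$, and the only candidate edge on two vertices of $e$ is $e$ itself, which is excluded; hence $e(K)=0=\lambda(H)\cdot 0$. If $x = v(H)-2$ and $v_e(K)=2$, then $V(K)=V(H)$ and $K$ together with $e$ sits inside $H$; since $K$ is a proper subgraph of $H \setminus e$, at least one further edge beyond $e$ must be absent, yielding $e(K) \le e(H)-2 = \lambda(H)x$, with equality realized precisely by $K = H \setminus \{e, e'\}$ for some $e' \in E(H) \setminus \{e\}$.

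For the strict inequality in all remaining cases, the key device is a ``lift'' $F \subseteq H$ with $3 \le v(F) < v(H)$ to which strict balance can be applied in the form supplied by Definition~\ref{D_xi}, namely
\[
e(F) - 1 \le \lambda(H)(v(F)-2) - \xi(H).
\]
The canonical choice is $F := K \cup \{e\}$, with any missing endpoints of $e$ added to $V(K)$; this gives $v(F) = x+2$ and $e(F) = e(K)+1$, and whenever $3 \le x+2 < v(H)$ the displayed inequality rearranges directly to $e(K) \le \lambda(H)x - \xi(H)$.

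The main technical obstacle lies in the boundary regime $v(F) = v(H)$, i.e.\ $x = v(H)-2$ with $v_e(K) \in \{0,1\}$, together with the near-equality sub-case $v_e(K)=2$, $V(K)=V(H)$, $e(K) \le e(H)-3$, in which strict balance cannot be invoked on $F$ directly. The plan here is first to apply (\ref{E_SB}) to $H \setminus \{w\}$ for an arbitrary vertex $w$ of $H$, which via Definition~\ref{D_xi} forces $\deg_H(w) \ge \lambda(H) + \xi(H) + 1$, and in particular $\deg_H(w) \ge 3$ whenever $v(H) \ge 4$. When $v_e(K) \in \{0,1\}$, deleting the one or two missing endpoints of $e$ from $E(H)$ then removes sufficiently many edges to open the required $\xi(H)$-gap. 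For the delicate sub-case $v_e(K)=2$, $V(K)=V(H)$, $e(K) \le e(H)-3$, I would pass to $F' := F \setminus \{w\}$ for a carefully chosen $w \in V(K) \setminus e$ of low $K$-degree, apply strict balance to $F'$, and propagate the resulting gap back to $K$. The crux, and the principal difficulty, is controlling the choice of $w$ so that $\deg_K(w)$ is small enough to transport the $\xi(H)$-gap from $F'$ back to $K$, since the average $K$-degree may itself exceed $\lambda(H)$.
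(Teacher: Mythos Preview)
Your central device---forming $F = K \cup e$ and invoking strict balance---is exactly what the paper does, and for $0 < x < v(H)-2$ you and the paper proceed identically. The case split on $v_e(K)$ is unnecessary: since $F$ adjoins both endpoints of $e$ regardless, one always has $v(F) = x+2$, and the paper simply organizes the argument by the three ranges $x=0$, $x=v(H)-2$, and $0 < x < v(H)-2$. In the middle range the paper just observes $e(K) \le e(H)-2$ directly from $K \subsetneq H \setminus e$, records the equality case, and stops.

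The divergence comes from how you parse ``all other cases.'' You take it to include the sub-case $x = v(H)-2$, $e(K) \le e(H)-3$, and then flag this as the ``principal difficulty'' calling for a vertex-deletion trick. But the paper does not claim the $\xi(H)$-gap there, and indeed that stronger assertion is \emph{false}: for $H = K_7$ one computes $\lambda(K_7) = 19/5$ and $\xi(K_7) = 6/5$ (attained at $F = K_6$), yet $K$ with $V(K) = V(K_7)$ and $e(K) = \binom{7}{2}-3 = 18$ gives $\lambda(H)x - e(K) = 19 - 18 = 1 < 6/5$. So your planned argument---pick $w$ of small $K$-degree and transport the gap from $F \setminus w$---cannot succeed in general, and the caveat you yourself raise about the average $K$-degree possibly exceeding $\lambda(H)$ is precisely the obstruction. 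The fix is to drop the phantom sub-case: the $\xi(H)$-gap is only asserted (and only needed downstream) for $0 < x < v(H)-2$, and that is exactly what the $F = K \cup e$ construction delivers.
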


\begin{proof}
First, we note that if $K$ contains $x=0$ 
vertices outside of $e$, 
then clearly $e(K)=0$ 
since $K\subset H\setminus e$, 
and 
so $\lambda(H) x-e(K)=0$ holds trivially. 

Next, if $x=v(H)-2$, then
\[
\lambda(H)x-e(K)
=e(H)-2-e(K)
\ge 0
\] 
by \eqref{E_lamH} and 
since $K\subsetneq H\setminus e$. 
Moreover, equality holds 
if and only if $e(K)=e(H)-2$. 

Finally, suppose that $0<x<v(H)-2$.
In this case, consider 
$F=K\cup e$ obtained from $K$ by adding $e$. 
Observe that 
\[
3\le v(F)=x+2 < v(H). 
\]
Then, since $H$ is strictly balanced, 
it follows 
by \eqref{E_SB}
that 
\[
e(K) = e(F)-1 
\le \lambda (H)(v(F)-2) -\xi(H)
= \lambda(H) x-\xi(H),
\]
as claimed.
\end{proof}

\subsubsection{Proof of Lemma \ref{L_strTWB}}

Let us return to the setting of Lemma \ref{L_strTWB}, 
where $S\subsetneq T$
is a proper 
edge-induced 
subgraph of a $k$-TWG $T$, for some $k\ge1$. 
By Definition \ref{D_TWGleaf}, $T$ can be obtained  
by removing an edge $e'$ from some $(k-1)$-TWG $T'$
and then grafting a leaf $L$ in its place. 
Therefore, since $L\cup e'$ is a copy of $K_r$, 
the next useful result follows 
immediately by Lemma \ref{L_SB}.

\begin{lemma}[Efficiency within a leaf]
\label{L_cEL}
Suppose that $L$ is not contained in $S$. 
Let $x$ denote the number of vertices in 
$S\cap L$ that are not in $e'$.
Then 
\[
e(S\cap L) \le \lambda x, 
\] 
with equality if and only if 
\begin{enumerate}
\item $x=0$; or else
\item $x=r-2$ and $e(S\cap L)=\binom r2 -2$,
\end{enumerate}
and in all other cases,
\[
e(S\cap L) \le \lambda x-\xi,
\] 
for some $\xi>0$ depending only on $r$. 
\end{lemma}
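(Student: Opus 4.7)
The plan is to obtain Lemma \ref{L_cEL} as an immediate specialization of Lemma \ref{L_SB} to the case $H = K_r$. The key observation is that, by Definition \ref{D_TWGleaf}, the leaf $L$ together with the edge $e'$ is precisely a copy of $K_r$: the leaf $L$ arose by taking a copy $H'$ of $K_r$ containing $e'$ and deleting $e'$, so $H := L \cup e' = H'$ is a copy of $K_r$ and $L = H \setminus e'$.

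First I would verify the hypotheses of Lemma \ref{L_SB}. Because $e' \notin E(T)$ and $S \subseteq T$, the intersection $K := S \cap L$ is a subgraph of $L = H \setminus e'$, and the assumption that $L$ is not contained in $S$ upgrades this to $K \subsetneq H \setminus e'$. The quantity $x$ of Lemma \ref{L_cEL} --- the number of vertices of $K$ outside $e'$ --- coincides with the $x$ of Lemma \ref{L_SB}, and the density constant $\lambda(K_r) = (\binom{r}{2}-2)/(r-2)$ matches the $\lambda$ of \eqref{E_lambda}.

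Second, I would invoke the strict balance of $K_r$ for $r \ge 5$, which is asserted in Section \ref{S_Discussion} and is easily checked: the densest proper subgraph of $K_r$ on at least three vertices is $K_{r-1}$, for which a short arithmetic calculation shows $(\binom{r-1}{2}-1)/(r-3) < \lambda$ exactly when $r \ge 5$. This supplies the positive constant $\xi := \xi(K_r) > 0$ from Definition \ref{D_xi}, depending only on $r$. Applying Lemma \ref{L_SB} with $H = K_r$, $e = e'$ and $K = S \cap L$ then yields $e(S\cap L) \le \lambda x$, with equality only in the two listed cases --- namely $x = 0$ (for which $e(K)=0$ is automatic, since the only vertices of $K$ lie in $e'$ and $K$ contains no edge of $e'$), or $x = v(H) - 2 = r-2$ with $e(S\cap L) = e(H) - 2 = \binom{r}{2} - 2$ --- and otherwise the strict improvement $e(S\cap L) \le \lambda x - \xi$.

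I expect no substantive obstacle here: the previous subsubsection was set up precisely so that Lemma \ref{L_cEL} follows essentially by inspection. The only place real content enters is the strict balance of $K_r$, which is exactly the arithmetic fact that, as emphasized in Section \ref{S_Discussion}, separates the $r \ge 5$ regime from the $r = 4$ case and underlies the rest of the sharp-threshold analysis.
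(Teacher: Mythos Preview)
Your proposal is correct and follows exactly the paper's own approach: the paper states that since $L\cup e'$ is a copy of $K_r$, the lemma ``follows immediately by Lemma \ref{L_SB}.'' You have simply spelled out the hypothesis-matching and the strict-balance check in more detail than the paper does.
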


In our proofs, 
it will often be useful to consider one of the two 
following partial TWGs of $T'$.
\begin{itemize}
\item Let $S'$ be the edge-induced subgraph of $T'$, 
induced by the edges of $S$ that are contained in $T'$.
\item Let $\bar S'$ be obtained 
from $S'$ by adding the edge $e'$
and its vertices (if not already included in $S'$). 
\end{itemize}

Note that because $S\subset T$
and $e'\notin E(T)$, it follows
that $e'$ is not an edge in $S'$. Therefore, $S'\subsetneq T'$, 
which will allow us
to apply various induction hypotheses directly
in some cases.  On the other hand, $\bar S'$ may be equal to $T'$,
in which case other arguments will be required.

\subsubsection{Efficiency: Proof of Lemma \ref{L_strTWB}(1)}
\label{S_strTWB1}

First, we observe that if $S$ is obtained from $T$ 
by removing a branch $B$ that is a TWG for some 
$f\notin E(T)$ then 
$T_f=S\cup f$ is a $k'$-TWG for $e$ for some $k'\ge 0$. 
In this case, 
$\sigma(S)=v(T_f)-2=k'(r-2)$ 
and $e(S) = e(T_f)-1=({r\choose 2}-2)k'$, 
and so $\cE(S)=0$. 

Next, for the inequality 
and the other direction of the equality case, 
we proceed by induction on $k$, 
the order of $T$.

The base case $k=0$ holds trivially, 
as then $T$ is a single
edge and $S=\emptyset$, 
which can be obtained from $T$ by
removing $T$ (a branch of $T$) from $T$.  

For the induction step, suppose that $k\ge1$. 
Then, as discussed above, 
$T$ can be obtained by removing an edge $e'$ 
from some $(k-1)$-TWG $T'$
and then grafting a leaf $L$ in its place.

{\bf Empty case.} 
Suppose that $S$ contains no edges in $L$
and no vertices in $L$, except possibly those in $e'$. 
Then $S=S'$. Moreover, we have that $S\subsetneq T'$, since 
$e'\in E(T')$ but  
$e'\not\in E(T)$. 
Therefore, the inductive 
hypothesis applies directly to show that 
$\cE(S)=\cE(S')\ge0$. 
For the case of equality, 
note that if $S$ is obtained from $T'$ by removing
a branch $B'$ from $T'$, then $e'\in E(B')$. In this case,  
$S$ can be obtained from $T$ by removing the branch $B$
obtained from $B'$ by removing $e'$ and grafting $L$
in its place.

{\bf Complete case.} 
Next, suppose that 
$L\subset S$. 
Then $\bar S'\subsetneq T'$
since $S\subsetneq T$. 
Hence by induction, 
we have that $\cE(\bar S')\ge0$,
and it follows that $\cE(S)\ge0$ since
$S$ has precisely $r-2$ more vertices 
and ${r\choose2}-2=\lambda(r-2)$
more edges than $\bar S'$, 
so that $\cE(S)=\cE(\bar S')\ge0$. 
For the case of equality, 
recall that $e'\in E(\bar S')$. 
Therefore, if $\bar S'$ is obtained from $T'$ by 
removing one of its branches $B$, 
then $e'\notin E(B)$, 
in which case 
$B$ is also a branch of $T$, and 
$S$ can be obtained by removing $B$ from $T$.

{\bf Nearly-complete case.}
Next, we suppose that 
$e(S\cap L)=\binom r2-2$, that is, 
$S$ contains all except one of the 
edges $e''$ in $L$. 
Note that $S$ necessarily contains all $r-2$ of the vertices
in $L$ that are not in $e'$. 
Therefore, 
by induction,
\[
\cE(S) 
\ge \cE(S') 
+ \lambda (r-2)-e(S\cap L)
=\cE(S')\ge 0.
\]
Moreover, if equality holds 
then necessarily $\cE(S')=0$
and $S'$ contains 
all vertices in $e'$
that are not in $e$, as then 
$\sigma(S)=\sigma(S')+(r-2)$. 
In this case, $\bar S'$ is a subgraph of $T'$
on the same vertex set as $S'$. 
It follows that $\cE(\bar S')<\cE(S')=0$, 
which in turn implies that $\bar S'=T'$.
Hence $S$ contains all edges in $T$, 
except the branch $B=e''$ of $T$.

{\bf Intermediate cases.} 
Finally, suppose that 
$0<e(S\cap L)<\binom r2 -2$
and that $S$ contains $x$ many of the vertices in 
$L$ that are not in $e'$.
In this case, 
\[
\cE(S)
\ge \cE(S') + \lambda x - e(S\cap L)
>\cE(S')\ge 0,
\]
where the strict inequality follows by 
Lemma \ref{L_cEL}.

\subsubsection{Deficiency: Proof of Lemma \ref{L_strTWB}(2)}
\label{S_strTWB2}

Once again, the proof is by induction on $k$, in the 
same setting as in the proof of part (1) of the Lemma. 
The base case $k=0$ is trivial, as then $S = \emptyset $,
and so $P=\emptyset$ as well, in which case $\cE=0$
and $\cD=0$. 

Recall that in our 
proof of Lemma \ref{L_strTWB}(1) above, 
we showed that 
$\cE(S) $ is greater than or equal to 
either 
$\cE(S') $ or $\cE(\bar S')$, 
depending on the case. 

In the current proof, we will show that if 
there is no increment in $\cE$, 
then there is no increment in $\cD$.
In addition, we observe that if 
$\cE$ does increase, the its increment is 
bounded away from $0$ by Lemma \ref{L_cEL}. 
This will complete the proof, 
noting that increments of $\cD$ are all at most $r$.

{\bf Empty case.} 
If $S$ contains no edges in $L$
and no vertices in $L$, except possibly those in $e'$, 
then we have that $S=S'\subsetneq T'$, and so 
$\cE(S)=\cE(S')$ and $\cD(S,T)=\cD(S',T')$. 
Therefore, this case follows 
by the inductive hypothesis applied 
to $S'$ and $T'$.

{\bf Complete case.}
Recall that, if $L\subset S$, then 
$\bar S'\subsetneq T'$ and 
$\cE(S)=\cE(\bar S')$.
We claim that 
$\cD(S,T)=\cD(\bar S',T')$.
Indeed, it is clear that the $r-2$ 
vertices of $L$ that are not in $e'$
belong to $F(S,T)$. 
Furthermore, for each $v\in e'$,
\[
E_v(T)=(E_v(T')\setminus e')\cup E_v(L). 
\] 
Therefore, for each such $v$, we have that 
$v\in P(S,T)$ if and only if $v\in P(\bar S',T')$,  
and so the result follows by the inductive 
hypothesis applied to $\bar S'$ and $T'$.

{\bf Nearly-complete case.}
Recall that when 
$S$ contains all except one edge $e''$ of $L$,  
we showed that
$\cE(S)\ge\cE(S')$ with equality if and only if 
$S'$ contains all of the vertices
in $e'$ that are not in $e$. Suppose that equality holds, 
and consider the following two cases. 
First, if $\cE(S')=\cE(S)=0$ then $S$ contains all the edges of $T$ 
except $e''$ whence $\cD(S,T)\le 2$ 
(if either of the vertices
in $e''$ are in $e$ then they are not 
accounted for in $\cD(S,T)$ so there is inequality here). 
Second, if $\cE(S')>0$ then $\bar S'$ is also a proper subgraph of 
$T'$ and $\sigma(S')=\sigma(\bar S')$ because $S'$ contains 
all of the vertices in $e'$ that are not in $e$. 
Therefore, $\cE(S)=\cE(S')=\cE(\bar S')+1$, 
and the claim follows by induction on $\bar S'$.
On the other hand, recall that if 
$\cE(S)>\cE(S')$ 
then some $v\in e'$ (for which $v\notin e$) is not in $S'$, 
in which case the increment
$\cE(S)-\cE(S')\ge \lambda$.

{\bf Intermediate cases.}
Recall that, when 
$0<e(S\cap L)<\binom r2 -2$, 
we noted that Lemma \ref{L_cEL} implies
$\cE(S)>\cE(S')$.
However, this lemma in fact gives  
$\cE(S)-\cE(S')\ge\xi$, 
for some $\xi>0$ depending only on $r$.

\subsubsection{Extendability: Proof of Lemma \ref{L_strTWB}(3)}
\label{S_strTWB3}

Finally, we turn to the third part of the lemma, 
regarding vertex-minimal tree extensions of $S$. 

As in the previous parts, 
the proof is by induction on $k$. 
However, the case analysis will 
be different. 

The base case $k=0$ is trivial, 
so we proceed with the 
inductive step.

{\bf Case 1.} 
If $e(S\cap L)=0$, 
then the claim follows directly by induction.

{\bf Case 2.} 
If $\bar S'=T'$ then $\cE(S')=0$, and so 
\[
\cE(S) = \lambda x - e(S\cap L),
\]
where, as before, $x$ is the number of vertices in 
$S\cap L$ that are not in $e'$.

Taking $T_*=T$, we find that $\cT(S)\le r-2-x$, 
since
$S\subsetneq T$ and $\sigma(T)=\sigma(S)+(r-2-x)$. 

There are two sub-cases to consider:

{\bf Case 2a.} 
If $e(S \cap L)=\binom r2 -2$ then $x=r-2$ and $\cT(S)=\cE(S)=0$.

{\bf Case 2b.} 
Otherwise, $0<e(S\cap L)<\binom r2 - 2$, 
since $\bar S'=T'$ and $S\subsetneq T$. 
Hence,
by Lemma \ref{L_cEL}, we have that $\cE(S)\ge\xi$, 
and so
$\cT(S) \le c\cE(S)$.

{\bf Case 3.} 
Suppose that 
$e(S\cap L)>0$ and $\bar S' \subsetneq T'$. 
Note that 
\[
e(S)=e(\bar S')+e(S\cap L)-1\,,
\]
where the $-1$ accounts for the removal of $e'$.

Denote by $\bar T_*$ the TWG for $e$ 
satisfying $\bar S'\subsetneq \bar T_*$ and 
\[
\sigma(\bar T_*)=\sigma(\bar S')+\cT(\bar S').
\]
By induction, 
$\cT(\bar S')\le c\cE(\bar S')$.

There are three sub-cases to consider:

{\bf Case 3a.} 
If both of the vertices in $e'$ are in $S\cup e$, 
then it follows that 
$\sigma(S)=\sigma(\bar S')+x$. 
Therefore,
\[
\cE(S)=\cE(\bar S')+\lambda x-e(S\cap L)+1.
\]
By Lemma \ref{L_cEL}, we find that 
$\cE(S) \ge \cE(\bar S')$, 
where equality holds if and only if $L\subset S$, 
and otherwise in fact $\cE(S) \ge \cE(\bar S')+1$.

We construct $T_*$ from $\bar T_*$ 
by removing $e'$ and grafting $L$ in its place. 
As such, $v(T_*)=v(\bar T_*)+r-2$, and so 
\[
\cT(S)
\le \cT(\bar S')+(r-2-x).
\]
Consequently, 
if $x=r-2$ then 
$\cT(S) \le c\cE(S) $, 
and otherwise, we have that 
$\cT(S) \le \cT(\bar S')+(r-3)$ and 
$\cE(\bar S')\le \cE(S)-1$, 
and so  
$\cT(S)\le c\cE(S)$.

{\bf Case 3b.}  
If neither of the vertices in $e'$ are 
in $S\cup e$, 
then $x\ge2$, and equality holds 
if and only if $S\cap L$ is an edge. 
Furthermore, 
\[
\sigma(S)-\sigma(\bar S')
=x-2,
\] 
where the $-2$ 
accounts for the vertices in $e'$. 
Therefore,
\[
\cE(S)-\cE(\bar S')
=\lambda(x-2)-e(S\cap L)+1.
\]

We construct $T_*$ as follows:

{\bf Case 3bi.}  
If $x>2$, 
then note that $F=S\cap L$ has 
$3\le v(F)=x<r$ and $e(F)=e(S\cap L)$. 
Hence (see Definition \ref{D_xi}) $\cE(S)-\cE(\bar S')\ge\xi$. 
Therefore, in this case, 
we can obtain $T_*$ from $\bar T_*$ 
simply by removing $e'$ and grafting $L$ in its place. 
Then $\cT(S)\le\cT(\bar S')+r-3$, 
and the claim follows.

{\bf Case 3bii.}  
Suppose that $x=2$, 
so that $S\cap L$ is a single edge $f$. 
Then $\cE(S)=\cE(\bar S')$.
In this case, we obtain $T_*$ from $\bar T_*$
by a more delicate construction.  
The main observation is that
$e'$ is an isolated edge in $\bar S'$, 
since neither of the vertices in $e'$ are 
in $S\cup e$. Likewise, $f$ is an isolated edge in $S$. 
Hence, to obtain $T_*$ from $\bar T_*$, 
we replace the edge $e'$ in $\bar T_*$ with the edge $f$. 
We observe that: 
\begin{itemize}
\item $S\subsetneq T_*$
(since neither vertex in $e'$ is in an edge of $S$); 
\item $T_*$ is a TWG for $e$ 
(since neither vertex in $e'$
is in $e$); and 
\item $v(T_*)=v(\bar T_*)$. 
\end{itemize}
By induction, we find that  
\[
\cT(S)
\le \cT(\bar S')
\le c\cE(\bar S')
=c\cE(S),
\]
as required.

{\bf Case 3c.} 
Finally, suppose that one vertex 
$u$ in $e'$ is in $S\cup e$ 
but that the other vertex $v$ in $e'$ is not. 
This case is somewhat similar to the previous Case 3b. 
For completeness, let us give the details. 

First, note that $x\ge1$,  
\[
\sigma(S)-\sigma(\bar S')
=x-1
\]
and 
\[
\cE(S)-\cE(\bar S')
=\lambda(x-1)-e(S\cap L)+1.
\]

We construct $T_*$ as follows:

{\bf Case 3ci.}  
If $x>1$, we obtain $T_*$ from $\bar T_*$ 
by removing $e'$
and grafting $L$ in its place. 
By similar reasoning as in Case 3bi, 
we find that
$\cT(S)-\cT(\bar S') \le r-3$ and $\cE(S)-\cE(\bar S')\ge\xi$, 
and the claim follows.

{\bf Case 3cii.}  
Otherwise, if $x=1$ then $S\cap L$ 
is a single edge $e''$
between $u$ (the vertex in $e'$ that is in $S\cup e$) 
and some vertex $v''$
in $L$ that is not in $e'$. 
In this case, we construct $T_*$ from $\bar T_*$ 
by replacing $v$ with $v''$. 
Note that 
\begin{itemize}
\item $S\subsetneq T_*$
(since $v$ is not in any edge of $S$); 
\item $T_*$ is a TWG for $e$ 
(since $v$ is not in $e$); and 
\item $v(T_*)=v(\bar T_*)$, 
\end{itemize}
and by induction 
\[
\cT(S)
\le \cT(\bar S')
\le c\cE(\bar S')
=c\cE(S).
\]

This concludes the proof
of all parts in Lemma \ref{L_strTWB}.

\subsection{Enumeration of partial TWGs}
\label{S_EBs}

Recall (see Lemma \ref{clm:kTWG_edges}) 
that $k$-TWGs have $s+2$ vertices
and $\lambda s+1$ edges, 
where $s=(r-2)k$ is its number of vertices
that are not in $e$. 
By Lemma \ref{lem:count_TWG} 
(and \eqref{E_tkasy}) 
there are 
\[
s^{O(1)}\gamma^{s}s!
\]
many $k$-TWGs. 

The following lemmas
establish similar results 
for the number of partial TWGs $S$ 
and the number of 
ways to complete such an 
$S$ to a TWG $T$, 
up to an error depending on its 
edge efficiency 
parameter $\cE(S)$.

\begin{lemma}[Counting partial TWGs]
\label{lem:countXs}
The number of partial TWGs $S$, for 
a given edge $e$, with  
\begin{itemize}
\item $\sigma$ many vertices not in $e$; and 
\item edge efficiency $\cE$ 
\end{itemize}
is at most 
\[
\sigma^{c\cE}\gamma^{\sigma}\sigma!,
\]
for some constant $c$ depending only on $r$. 
\end{lemma}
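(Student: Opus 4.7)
The plan is to encode each partial TWG $S$ as a pair $(T_*, R)$, where $T_*$ is a full TWG containing $S$ and $R := E(T_*) \setminus E(S)$ is the set of ``missing'' edges, and then bound the count of such pairs. The central input will be Lemma \ref{L_strTWB}(3), which guarantees that every partial TWG admits a tree extension with only $O(\cE)$ extra vertices, so that in the encoding both $T_*$ and $R$ are small perturbations controlled by $\cE$.

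Fix a labeled vertex set $V_0$ of size $\sigma+2$ containing the endpoints of $e$, and consider a partial TWG $S$ with $V(S) = V_0$ and $\cE(S) = \cE$. First I would apply Lemma \ref{L_strTWB}(3) to obtain a TWG $T_*$ for $e$ with $S \subsetneq T_*$ and $\tau := \sigma(T_*) - \sigma \le c\cE$. After relabeling the $\tau$ added vertices to a canonical label set $W_\tau$ disjoint from $V_0$, I may assume $V(T_*) = V_0 \cup W_\tau$. Setting $R := E(T_*) \setminus E(S)$, Lemma \ref{clm:kTWG_edges} then gives
\[
e(R) = e(T_*) - e(S) = (\lambda(\sigma+\tau)+1) - (\lambda\sigma - \cE) = \lambda\tau + 1 + \cE = O(\cE),
\]
so $R$ is a tiny edge subset within the $O(\sigma)$-edge graph $T_*$.

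The bound then follows by double counting the pairs $(T_*, R)$. For each fixed $\tau \in \{0, 1, \ldots, c\cE\}$, Lemma \ref{lem:count_TWG} together with \eqref{E_tkasy} give at most $C\gamma^{\sigma+\tau}(\sigma+\tau)!$ TWGs $T_*$ on $V_0 \cup W_\tau$, and for each such $T_*$ the number of admissible edge subsets $R$ of size $\lambda\tau+1+\cE$ is at most $\binom{e(T_*)}{e(R)} \le \sigma^{O(\cE)}$. Since each partial TWG $S$ on $V_0$ arises from at least one such pair, summing over $\tau \le c\cE$ and absorbing the lower-order factors $\gamma^\tau \le \gamma^{c\cE}$ and $(\sigma+\tau)!/\sigma! \le (\sigma+\tau)^\tau \le \sigma^{O(\cE)}$ yields the claimed bound $\sigma^{c'\cE}\gamma^\sigma\sigma!$.

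The main obstacle is not the counting itself but the tree extendability input Lemma \ref{L_strTWB}(3), whose case analysis forms the technical heart of the structural lemma; once that is in hand, the double counting above is essentially routine. A minor subtlety is the small-$\sigma$ regime, where $\sigma^{O(\cE)}$ degenerates; this can be handled by a trivial base case, since the number of partial TWGs on any bounded vertex set is $O(1)$.
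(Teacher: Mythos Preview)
Your proof is correct and follows essentially the same approach as the paper: use Lemma~\ref{L_strTWB}(3) to embed $S$ in a TWG $T_*$ with only $O(\cE)$ extra vertices, count such $T_*$ via Lemma~\ref{lem:count_TWG}, and then count the ways to recover $S$ as an edge subset. The only cosmetic differences are that the paper fixes a single $k=\lceil(\sigma+c\cE)/(r-2)\rceil$ rather than summing over $\tau$, and selects the $\lambda\sigma-\cE$ edges of $S$ directly rather than the complementary set $R$; neither affects the argument.
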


\begin{proof}
By Lemma \ref{L_strTWB}(3), 
every such partial TWG $S$ 
is contained in some TWG 
$T_*=T_*(S)$
with $\sigma(T_*)\le \sigma+c\cE$. 
In particular, every such $S$
is contained in a $k$-TWG with $k=\lceil (\sigma+c\cE)/(r-2) \rceil$. 
Therefore, we can construct such an $S$ by first choosing 
such a $k$-TWG $T$ and then selecting 
$\lambda\sigma-\cE$ of its edges. 
By Lemma \ref{lem:count_TWG}, 
the number of possible choices is at most
\[
\gamma^{(r-2)k}[(r-2)k]!
{\lambda(r-2)k+1\choose\lambda\sigma-\cE}
\le \sigma^{c\cE}\gamma^{\sigma}\sigma!,
\]
as claimed. 
\end{proof}

\begin{lemma}[Counting extensions of partial TWGs]
\label{lem:comXs}
Suppose that $S$ is a 
partial TWG for a given edge $e$, 
with 
\begin{itemize}
\item $\sigma=\sigma(S)$ many vertices not in $e$; and 
\item edge efficiency $\cE=\cE(S)$. 
\end{itemize}
Suppose also that $V$ 
is a vertex set of size $(r-2)k+2$
that contains 
\begin{itemize}
\item all vertices in $S$; 
\item the edge $e$; and 
\item an additional 
$\sigma'=(r-2)k-\sigma$
other vertices.
\end{itemize}
Then the number of $k$-TWGs $T$ for $e$ 
on $V$  
such that $S\subsetneq T$ is at most
\[
k^{c(\cE+1)}\gamma^{\sigma'}\sigma'!,
\]
for some constants $c$ 
depending only on $r$. 
\end{lemma}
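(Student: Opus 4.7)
The plan is to adapt the recursive enumeration behind Lemma \ref{lem:count_TWG} to the constrained setting $S\subsetneq T$, using Lemma \ref{L_strTWB}(3) to quantify how closely $T$ must cluster around an extension of $S$. Any $k$-TWG $T\supsetneq S$ admits a canonical decomposition into a minimal sub-TWG $T^\star\subseteq T$ that contains $S$ (the ``core''), plus a forest of additional TWG-branches attached at edges of $T^\star$ that consume the remaining vertices of $V$. Fixing a canonical choice of $T^\star$ (for instance, the lexicographically smallest one) makes the decomposition well-defined and prevents double counting.

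I would first count the possible cores. By Lemma \ref{L_strTWB}(3), we may take $\sigma(T^\star)=\sigma+a$ with $a\in\{0,1,\dots,c\cE\}$. There are at most $\binom{\sigma'}{a}$ ways to choose the extra $a$ vertices $A\subseteq V\setminus V(S)$, and for each such choice, the TWG structures on $V(S)\cup e\cup A$ that contain $S$ are counted by a recursion analogous to that in Lemma \ref{lem:count_TWG}: the edges of $S$ pin most vertex positions into forced slots, while the ``slack'' permitted by $\cE$ produces at most $k^{O(\cE+1)}$ admissible structures. Summing over $a$ gives a bound of $k^{c(\cE+1)}$ on the total number of candidate cores.

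I would then count the extensions of $T^\star$ to $T$. Given a core $T^\star$ with $k^\star=(\sigma+a)/(r-2)$ copies of $K_r$, the remaining $k-k^\star$ copies of $K_r$ in $T$ attach as TWG-branches at edges of $T^\star$ using the $\sigma'-a$ unused vertices of $V$. This is essentially the standard TWG recursion: the vertex assignments contribute $\gamma^{\sigma'-a}(\sigma'-a)!\le\gamma^{\sigma'}\sigma'!$ up to polynomial-in-$k$ factors from the Fuss--Catalan choice of hyper-tree structure and attachment points. Multiplying the core and extension counts, after absorbing the polynomial overhead into the exponent, yields the desired bound $k^{c(\cE+1)}\gamma^{\sigma'}\sigma'!$.

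The main obstacle is the core count: since $S$ is not itself a TWG, the number of its minimal TWG completions on a fixed small vertex set is not provided by earlier lemmas and must be extracted by a bespoke recursion that simultaneously respects the hyper-tree structure and the edge-efficiency budget $\cE$. Care is needed to ensure that the polynomial-in-$k$ factors arising from the $O(\cE)$ ``inefficient'' branches of $T^\star$ compound to at most $k^{c(\cE+1)}$, rather than to a larger power of $k$.
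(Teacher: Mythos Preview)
Your decomposition into a core $T^\star$ plus attached branches is natural, but the argument has a genuine gap at precisely the point you flag as ``the main obstacle.'' Counting the cores $T^\star\supsetneq S$ on a fixed vertex set of size $\sigma+a+2$ is the same problem as the lemma itself, just with $\sigma'$ replaced by $a\le c\cE$. Your claim that this count is $k^{O(\cE+1)}$ is not supported: the hand-wave ``the edges of $S$ pin most vertex positions into forced slots'' does not cash out into a bound, because a partial TWG $S$ need not determine the hyper-tree structure of any TWG containing it up to polynomially many choices. Lemma~\ref{L_strTWB}(3) tells you how many \emph{extra vertices} a minimal completion needs, but says nothing about how many such completions exist on a given vertex set. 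Without an independent bound here, the argument is circular.

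The paper sidesteps this entirely via a complement trick. For fixed $S$, the map $T\mapsto \tilde S$, where $\tilde S$ is the edge-induced subgraph of $T$ on $E(T)\setminus E(S)$, is a bijection between $k$-TWGs $T\supsetneq S$ on $V$ and their complements. Since $F(\tilde S,T)=D(S,T)$ and $P(\tilde S,T)=P(S,T)$, one computes $\sigma(\tilde S)=\sigma'+\cD(S,T)$, and Lemma~\ref{L_strTWB}(2) (not part (3)) gives $\cD(S,T)\le c\cE+2$. A short calculation then shows $\cE(\tilde S)\le c(\cE+1)$ as well. Now $\tilde S$ is itself a partial TWG with controlled $\sigma(\tilde S)$ and $\cE(\tilde S)$, so Lemma~\ref{lem:countXs} applies directly and gives the bound $k^{c(\cE+1)}\gamma^{\sigma'}\sigma'!$. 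The point is that counting \emph{extensions} of $S$ reduces to counting \emph{partial TWGs} $\tilde S$ with bounded parameters, which is already done. Your approach does not exploit this duality and would need a separate structural argument to close the gap.
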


\begin{proof}
For a fixed $S$, there is a bijective correspondence 
between a $k$-TWG $T$ for $e$ containing $S$ 
and the complement $\tilde S$ induced
by all the edges in $T$ that are not in $S$.  
Since $F(\tilde S,T)=D(S,T)$ and 
$P(\tilde S,T)=P(S,T)$, 
it follows that 
\[
\sigma(\tilde S)
=(r-2)k-\sigma(S)+\cD(S,T).
\]
Hence, by Lemma \ref{L_strTWB}(2), 
\[
\sigma(\tilde S)+\sigma(S)
\le (r-2)k+c\cE(S)+2.
\]
Therefore, 
since $e(S)+e(\tilde S)=\lambda(r-2)k+1$,
we find that 
\begin{align*}
\cE(S)+\cE(\tilde S)
&= \lambda (\sigma(S) +\sigma(\tilde S))-(e(S)+e(\tilde S)) \\
&\le \lambda((r-2)k+c\cE(S)+2) - (\lambda(r-2)k+1) \\
&\le c(\cE(S)+1). 
\end{align*}
Altogether, 
$\sigma(\tilde S)\le \sigma' + c(\cE(S)+1)$ and 
$\cE(\tilde S)\le c(\cE(S)+1)$.
Therefore, by applying 
Lemma \ref{lem:countXs},
we find that there are at most
\[
k^{c(\cE+1)}\gamma^{\sigma'}\sigma'!
\]
such $\tilde S$, 
which completes the proof. 
\end{proof}

\subsection{Sharp upper bound}
\label{S_pcUB}

In this section, we prove 
the sharp upper bound 
Proposition \ref{P_pcUB}.

To begin, let us return to the setting of 
Lemma \ref{clm:mu}, wherein 
we fix some $e\in E(K_n)$ and let $B_k^{(e)}$ be 
the number of balanced $k$-TWGs for $e$ contained
in $\cG_{n,p}$. 
We now, more specifically, suppose that $k=\beta\log n$, 
for some $\beta>0$
to be determined below (but will depend only on $r$). 

Since $1\ll \log n\ll \sqrt{n}$, 
Lemma \ref{clm:mu} applies, and so  
\begin{equation}\label{E_mu}
\mu=\E(B_k^{(e)})\sim 
c_\beta   
(\log n)^{-c}
(1+\eps)^{(r-2)k}
p,
\end{equation}
as $n\to\infty$, 
for some constant $c_\beta>0$ depending only on $r$ and $\beta$
(and, as always, $c$ is some constant depending only on $r$). 

Let us write $T_1\sim T_2$ if 
$T_1$ and $T_2$ are two distinct 
balanced $k$-TWGs for $e$ such that $e(T_1\cap T_2)>0$. 
We put 
\[
\Delta 
= \sum_{T_1\sim T_2} \P(T_1,T_2 \subset \cG_{n,p})
=\sum_{T_1\sim T_2} p^{2\lambda(r-2)k+2-e(T_1\cap T_2)}.
\]

We will use
Janson's inequality, which we recall 
asserts that 
\[
\P(B_k^{(e)}=0)\le 
\exp\left(-
\frac{\mu^2}{\mu+\Delta}
\right).
\]

In order to apply this inequality,  
we will first bound $\Delta / \mu^2$ from above. 
We will use the fact that
\[
\frac{\mu^2}{\mu+\Delta}
=
\left(\frac 1\mu + \frac {\Delta}{\mu^2}\right)^{-1}
\]
and show that $1/\mu$ and $\Delta/\mu^2$
are both $\ll n^{-c}$, for some $c>0$.

Observe that
\begin{align*}
\Delta 
&= \sum_S\sum_{T_1\cap T_2=S}
p^{2\lambda (r-2)k+2-e(S)} \\ 
&\le \sum_S |\{T:S\subset T\}|^2 p^{2\lambda (r-2)k+2-e(S)} ,
\end{align*}
where the summation is over all non-empty partial TWGs $S$ 
of balanced $k$-TWG for $e$ (with labels in $[n]$) and, 
for each such $S$, $|\{T:S\subset T\}|$ is the set of all such $T$
that contain $S$. 

Then, by Lemmas \ref{lem:countXs} and \ref{lem:comXs}, 
we find that
\begin{align*}
\Delta 
&\le  
\sum_{\sigma,\cE}
{n\choose\sigma} \sigma^{c\cE}\gamma^{\sigma}\sigma! 
\left({n\choose\sigma'} k^{c(\cE+1)}\gamma^{\sigma'}\sigma'!\right)^2
p^{2\lambda (\sigma+\sigma')+2-(\lambda \sigma-\cE)}\\
&\le 
\sum_{\sigma,\cE} (1+\eps)^{\sigma+2\sigma'}
p^{2+\cE}
k^{c(\cE+1)}, 
\end{align*}
where the summation is over all $\sigma$ and $\cE$ 
for which there exists a partial TWG $S$ of a balanced 
$k$-TWG for $e$ with these specific 
parameters $\sigma(S)=\sigma$
and $\cE(S)=\cE$,  
and where, for convenience, we let $\sigma'=(r-2)k-\sigma$.
Note that, by \eqref{E_mu}, 
\[
\mu\ge k^{-c}
(1+\eps)^{\sigma+\sigma'}
p. 
\]
Therefore, 
\begin{equation}\label{eq:Delmu2}
\frac{\Delta}{\mu^2}
\le 
k^c\sum_{\sigma,\cE}(1+\eps)^{-\sigma}(pk^c)^\cE.
\end{equation}

We claim that the summation in 
\eqref{eq:Delmu2} 
is $o(n^{-c})$, for some $c$ depending only on $r$. 
To prove this, we will take two cases, 
with respect to whether the partial TGW
$S$ is edge inefficient $\cE>0$ or edge
efficient $\cE=0$. In the latter case, 
the assumption that $T$ is balanced
will finally come into play.

{\bf Case 1.} 
There are $O(k^2)$ summands with $\cE >0$. 
Note that, since $\cE(S)=\lambda\sigma(S)-e(S)$,  
it follows that $\cE \ge 1/(r-2)$. 
Also note that, trivially, $(1+\eps)^{-\sigma}\le 1$. 
As a consequence, 
recalling that $k=O(\log n)$, 
we deduce that the total contribution of 
these summands to \eqref{eq:Delmu2} 
in this case
is at most $n^{-(1+o(1))c}$, 
where $c=1/((r-2)\lambda)$, 
which only depends on $r$, as required.

{\bf Case 2.} 
If $\cE(S)=0$ for some partial TWG $S$ 
of a TWG $T$ 
then, by Lemma \ref{L_strTWB}(1), 
$S$ is obtained from $T$ by removing a single branch $B$. 
Since we may assume that $S\neq\emptyset$, it follows that 
$B\neq S$. In other words, $B$ is a branch of one of the 
${r\choose2}-1$ primal branches of $T$. 
Therefore, $S$ contains all vertices in the root $H^{(e)}$ of $T$
and ${r\choose2}-2$ of the primal branches of $T$
in full. 
Since $T$ is balanced, each primal branch contains
$(r-2)q$ vertices outside the root, were $q$ is as in 
\eqref{E_q}. Hence, 
\[
\sigma(S) 
\ge 
(r-2)+(r-2)q\left({r\choose2}-2\right)
\ge ck. 
\]
As such, the $O(k)$ summands in \eqref{eq:Delmu2} 
in which $\cE=0$ also have that $\sigma\ge ck$. 
Consequently, their total contribution 
is at most $k^c(1+\eps)^{-ck}$. 
Recall that $k=\beta\log n$. Therefore, by taking 
$\beta$ large,  
the total contribution is 
at most $n^{-c}$, for some $c>0$ depending only on $r$. 

With the claim above now proved, we now have that 
\[
\Delta/\mu^2
\ll (k/n)^c, 
\]
for some $c>0$ depending only on $r$. 
Also note that by \eqref{E_mu} and taking $\beta>0$ 
large, we may assume also that $\mu\gg n^{c}$. 

Therefore, by Janson's inequality, 
\[
\P(B_k^{(e)}=0) 
\le 
\exp(n^{-c})\ll n^2.
\]
Then, taking a union bound over all 
$e\in E(K_n)$, we find that, with high probability, 
$\cG_{n,p}$ contains a balanced $k$-TWG 
for all such $e$. 
In particular, $\langle \cG_{n,p} \rangle_{K_r}=K_n$, 
as claimed.

\section{Towards the lower bound}
\label{S_LB}

In this section, we prepare for the
proof of the upper bound 
in Theorem \ref{T_pc}.
Throughout, we let $H=K_r$, for some fixed $r\ge5$. 
However, for convenience often 
write $H$ to denote a copy of $K_r$. 

\subsection{A sharp lower bounds for TWGs}
\label{S_TWGthreshold}
Recall that in Section \ref{S_UB}, we showed that if 
\[
p=\left(\frac{1+\eps}{\gamma n}\right)^{1/\lambda},
\]
for some $\eps>0$, then, with high probability, all 
edges can be 
added 
to $\langle\cG_{n,p}\rangle_{K_r}$ 
by a TWG of logarithmic size. 
The first observation we make towards a matching lower bound, is that 
TWGs are much less likely
on the other side of this threshold.

\begin{lemma}[TWG threshold]
\label{L_lbTWG}
Fix some $e\in E(K_n)$. 
Let $\eps>0$ and put 
\[
p=\left(\frac{1-\eps}{\gamma n}\right)^{1/\lambda}.
\]
Then, with high probability, 
there is no TWG $T$ for $e$ in $\cG_{n,p}$. 
\end{lemma}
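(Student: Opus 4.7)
The plan is a first moment / union bound argument, which leverages the exact TWG count from Lemma~\ref{lem:count_TWG} together with the Fuss--Catalan asymptotic~\eqref{E_tkasy}. Writing $T_k^{(e)}$ for the number of $k$-TWGs for $e$ contained in $\Gnp$, I would begin with the identity
\[
\E[T_k^{(e)}] \;=\; \binom{n-2}{(r-2)k}\, t(k)\, p^{\lambda(r-2)k+1},
\]
where the edge count $\lambda(r-2)k+1$ and the vertex count $(r-2)k+2$ come from Lemma~\ref{clm:kTWG_edges}.

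The whole computation then hinges on a clean cancellation. Applying $\binom{n-2}{(r-2)k}\le n^{(r-2)k}/((r-2)k)!$ together with the bound $t(k)\le C\gamma^{(r-2)k}((r-2)k)!/k^{3/2}$ for $k\ge 1$ from~\eqref{E_tkasy}, the factorials cancel, yielding
\[
\E[T_k^{(e)}] \;\le\; \frac{Cp}{k^{3/2}}\,(\gamma n p^\lambda)^{(r-2)k} \;=\; \frac{Cp}{k^{3/2}}(1-\eps)^{(r-2)k}.
\]
The point is that $\gamma$ was defined in~\eqref{E_gamma} precisely so that $\gamma n p^\lambda=1-\eps$ exactly cancels the exponential growth rate of the TWG count. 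The geometric tail is trivially summable, so
\[
\sum_{k\ge 1} \E[T_k^{(e)}] \;\le\; Cp \sum_{k\ge 1}\frac{(1-\eps)^{(r-2)k}}{k^{3/2}} \;=\; O(p),
\]
while the $k=0$ contribution is just $\P(e\in\Gnp)=p$. By Markov's inequality, the probability that $\Gnp$ contains any TWG for $e$ is $O(p)=o(1)$, as required.

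I expect no real obstacle here: this computation is the mirror image of the supercritical expected-count calculation from Lemma~\ref{clm:mu}, and the geometric factor $(1-\eps)^{(r-2)k}$ alone already ensures convergence --- the $k^{-3/2}$ factor from the Fuss--Catalan asymptotic is a bonus that would, in fact, yield a finite expected count even at the exact critical point $\gamma np^\lambda=1$, consistent with sharpness of the threshold. The only minor point requiring care is to separate the $k=0$ term, since~\eqref{E_tkasy} only applies for $k\ge 1$, but this contributes only $p=o(1)$. Of course, this single-edge statement is nowhere near the full lower bound in Theorem~\ref{T_pc}, which requires excluding all witness graphs, not just TWGs --- that is the content of Sections~\ref{S_LBuptoC} and~\ref{S_sharpLB}.
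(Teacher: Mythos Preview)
Your argument is correct and essentially identical to the paper's proof: both compute the expected number of $k$-TWGs for $e$ via Lemma~\ref{lem:count_TWG} and Lemma~\ref{clm:kTWG_edges}, bound $\binom{n-2}{(r-2)k}\le n^{(r-2)k}/((r-2)k)!$ so that the factorials cancel against those in $t(k)$, and then sum the resulting geometric series $(1-\eps)^{(r-2)k}$ to get $O(p)\to 0$. The paper simply drops the $k^{-3/2}$ factor you retain (using only $t(k)=O(\gamma^\sigma\sigma!)$), which, as you note, is not needed for summability.
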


\begin{proof}
By Lemma \ref{lem:count_TWG},
there are $O(\gamma^\sigma \sigma!)$ 
(labelled) TWGs for a given edge $e$
of size $\sigma=(r-2)k$. 
Such TWGs have 
$e(T)=\lambda \sigma+1$ many edges. 
Therefore, the probability that a given $e$ has
a TWG in $\Gnp$ is bounded by 
\begin{align*}
p\sum_{s=0}^\infty 
p^{\lambda s}{n\choose s}
O(\gamma^s s!)
&\le O(p)\sum_{s=0}^\infty (\gamma np^\lambda)^s\\
&=O(p)\sum_{s=0}^\infty (1-\eps)^s
=O(p/\eps)\to0,
\end{align*}
and the result follows. 
\end{proof}

This result 
suggests  
that a sharp threshold for  $K_r$-percolation  occurs at
$p_c\sim(\gamma n)^{-1/\lambda}$. 
However, TWGs are only the simplest possible 
type of witness graph (formally defined in the next section).
To prove the sharp lower bound in 
Theorem \ref{T_pc}
we will require a reasonably precise 
understanding of the structure of general 
witness graphs, and specifically, how far from 
tree-like they can be as a function of their edge density.

\subsection{Witness graph algorithm (WGA)}
\label{S_WGA}

For each edge $e\in E(\langle G \rangle_H)$
eventually added to $G$ by the $H$-dynamics, 
we assign a  
{\it witness graph (WG)} 
$W^{(e)}\subset G$
that bears witness to the addition of $e$, 
in the sense that 
$e\in E(\langle W^{(e)} \rangle_H)$. 
The graph $W^{(e)}$ is defined by the recursive procedure
introduced in  \cite{BBM12}, 
which we call the {\it witness graph algorithm (WGA)}.

\begin{definition}[Witness graphs]
\label{D_WG}
Recall that $E_t$ denotes the set of edges added to a graph 
$G$ in the $t$th round of the $H$-dynamics. 
For each $e\in E(\langle W^{(e)} \rangle_H)=\bigcup_{t\ge0}E_t$, 
we define $W^{(e)}$ as follows. 
\begin{enumerate}
\item If $e\in E_0$, we simply put $W^{(e)}=e$. 
\item Otherwise, if $e\in E_t$, for some $t\ge1$, 
we select a copy $H^{(e)}$ of $H$, for which 
$E(H^{(e)}\setminus e)\subset \bigcup_{s<t}E_s$, 
and put 
\begin{equation}\label{E_WGdef}
W^{(e)}=\bigcup_{f\in E(H^{(e)}\setminus e)} W^{(f)}. 
\end{equation}
\end{enumerate}
\end{definition}

Note that, in the above algorithm, 
there is potentially 
some
flexibility in choosing the copy $H^{(e)}$ of $H$. 
However, any arbitrary choice will do, 
and will have no effect on our arguments.

\subsubsection{Aizenman--Lebowitz property}

As shown in \cite{BBM12}, 
WGs have the following  useful 
property, reminiscent of an analogous 
property for $r$-neighbor
bootstrap percolation on $\Z^d$, 
first observed in that context by 
Aizenman and Lebowitz \cite{AL88}.

\begin{definition}[Size of WG]
For convenience, 
we let $\sigma (W)=v(W)-2$ denote the {\it size} of a WG. 
In other words, $\sigma(W)$ is the number of vertices
in $W=W^{(e)}$ that are not in $e$. 
\end{definition}

Put 
\[
m(t)=\max_{e\in \bigcup_{s\le t}E_s} \sigma(W^{(e)}). 
\]
Note that, if 
$E_0,E_1\neq\emptyset$, 
then $m(0)=0$ and $m(1)=v(H)-2$. 
On the other hand, for $t>1$,
by the recursive structure of WGs, 
\[
m(t)
\le v(H)-2+(e(H)-1)m(t-1)
\le e(H)m(t-1),
\]
since $v(H)-2=m(1)\le m(t-1)$. 
In other words, in each round of the $H$-dynamics, 
$m(t)$ expands by at most a factor of $e(H)$. 
As such, we have the following result; 
see Lemma 13 in \cite{BBM12}
and Lemma 8 in \cite{BK24}.

\begin{lemma}[A--L for WGs]
\label{L_AL}
Suppose that, for some edge $e$, its WG is of
size $\sigma(W^{(e)})\ge v(H)-2$. 
Then, for all $ \sigma'\in [v(H)-2, \sigma(W^{(e)})]$, 
there is some 
edge $f$ with WG of size $\sigma(W^{(f)})\in [\sigma',  e(H) \sigma']$. 
\end{lemma}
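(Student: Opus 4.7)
The plan is to argue by descending through the recursive tree of witness subgraphs that is implicit in Definition \ref{D_WG}. Starting from $e_0:=e$, I will define a sequence $e_0,e_1,e_2,\dots$ of edges with $e_{i+1}\in E(H^{(e_i)}\setminus e_i)$ chosen to maximize $\sigma(W^{(e_{i+1})})$ amongst the children of $e_i$ in the witness tree. Since $W^{(e_{i+1})}\subseteq W^{(e_i)}$ as subgraphs of $\cG_{n,p}$, the sequence $\sigma(W^{(e_i)})$ is non-increasing, and it must eventually reach $0$ because the recursion in \eqref{E_WGdef} terminates at initial edges in $E_0$. I will then show that this descent cannot ``jump over'' any target interval $[\sigma',e(H)\sigma']$ with $\sigma'\ge v(H)-2$.

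The key inequality is the one already isolated in the paragraph preceding the statement: because $V(W^{(e_i)})$ is covered by $V(H^{(e_i)})$ together with the $V(W^{(f)})$, $f\in E(H^{(e_i)}\setminus e_i)$, and because the two endpoints of each such $f$ lie in $V(H^{(e_i)})$, one has
\[
\sigma(W^{(e_i)})
\;\le\; v(H)-2+\sum_{f\in E(H^{(e_i)}\setminus e_i)}\sigma(W^{(f)})
\;\le\; v(H)-2+(e(H)-1)\,\sigma(W^{(e_{i+1})}),
\]
by the greedy choice of $e_{i+1}$. Rearranged,
\[
\sigma(W^{(e_{i+1})})\;\ge\;\frac{\sigma(W^{(e_i)})-(v(H)-2)}{e(H)-1}.
\]
Now let $i^{*}$ be the smallest index for which $\sigma(W^{(e_{i^{*}})})\le e(H)\sigma'$. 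Such an index exists because, as noted above, the sequence eventually equals $0\le e(H)\sigma'$, and if $i^{*}=0$ we may simply take $f=e$ since the hypothesis already gives $\sigma(W^{(e)})\ge v(H)-2\ge\sigma'$. Otherwise $\sigma(W^{(e_{i^{*}-1})})>e(H)\sigma'$, and the rearranged inequality yields
\[
\sigma(W^{(e_{i^{*}})})\;>\;\frac{e(H)\sigma'-(v(H)-2)}{e(H)-1}\;\ge\;\sigma',
\]
where the last step is exactly the condition $\sigma'\ge v(H)-2$. Hence $f:=e_{i^{*}}$ witnesses the lemma.

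There is essentially no obstacle here beyond the elementary algebra at the end; the only thing to be careful about is the boundary condition $\sigma'\ge v(H)-2$, which is precisely what is assumed in the statement and which is used in exactly the right place to guarantee that the contraction factor $e(H)-1$ in the descent is compensated by the additive loss $v(H)-2$. Everything else is immediate from the recursive definition of witness graphs and the fact that the descent terminates at an initial edge.
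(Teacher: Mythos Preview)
Your proof is correct and uses the same key inequality derived in the paragraph preceding the lemma, namely $\sigma(W^{(e_i)})\le v(H)-2+(e(H)-1)\max_f\sigma(W^{(f)})$; the only organisational difference is that you descend along a greedy path in the recursion tree of $W^{(e)}$, whereas the paper tracks the global time-indexed maximum $m(t)$ and uses $m(t)\le e(H)m(t-1)$. One small slip: in the $i^*=0$ case you wrote $\sigma(W^{(e)})\ge v(H)-2\ge\sigma'$, but the second inequality is backwards---what you need (and have, directly from the hypothesis $\sigma'\in[v(H)-2,\sigma(W^{(e)})]$) is simply $\sigma(W^{(e)})\ge\sigma'$.
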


In order to identify the 
sub-critical region for 
$K_r$-percolation, we will 
use Lemma \ref{L_AL} 
as follows. If a given edge
$e$ is eventually added by the $K_r$-dynamics, 
then it has a witness graph $W$. 
Either $W$ is of size at most $\beta\log n$, 
or else {\it some} edge $f$
has a witness graph $W'$
of size between $\beta\log n$
and ${r\choose 2}\beta\log n$. Using these facts, 
our goal is to show that
$\P(e\in E(\Gnp))\to0$.

\subsection{Red edge algorithm (REA)}

We will use the following 
{\it red edge algorithm (REA),}
as introduced in \cite{BBM12}, 
to study non-tree WGs. 

We assume that $e\notin E(G)$, so that $W^{(e)}\neq e$. 

This algorithm describes the formation 
of $W=W^{(e)}$ for a given 
$e\in \langle G\rangle_H$. 
Essentially, the REA is obtained from the WGA 
(which recall describes the formation of {\it all} WGs)
by suppressing all steps in the WGA that do not
contribute to the formation of $W$.
For technical convenience,  
we also ``slow down'' the dynamics, so that 
in each step a single copy of $H$ is completed. 

More formally, 
$W$ can be described in terms of a series of 
copies of $H$, 
\[
H^{(e_1)},H^{(e_2)},\ldots,H^{(e_m)},
\]
such that 
(1) $e_m=e$ and (2) for all $i<j$, if $e_i\in E_t$ 
then $e_j\notin E_s$ for all $s<t$. 
In other words the $e_i$ are weakly time ordered, with respect
to the time of their addition to $G$. 
The edges $e_i$ are distinct and not in $G$.

For ease of notation, let us write $H_i=H^{(e_i)}$. 
In each step $j\le m$ of the REA, we add all edges in 
$H_j$ that are not in 
$\bigcup_{i<j}H_i$. 
One such edge $e_j$ is colored red. 
All other (if any) edges are in $G$ and colored black. 
Note that the color
of any other edges  in $H_j$ have already been 
assigned by the REA in previous steps $i<j$. 
After all $m$ steps of the REA have been completed, 
$W$ is obtained as the graph induced 
by all the black edges. 

It will sometimes be helpful to write 
$H_i^*$ to emphasize that the REA 
defines an edge-colored version of $H_i$.
In this way, $W$ is simply obtained by removing all the
red edges from $\bigcup_{i=1}^m H_i^*$.

\subsubsection{REA components}
\label{S_component}

The REA gives rise to a natural component 
structure, which evolves in time with the algorithm. 

Recall that after the $k$th step of the REA, 
we have added $k$ copies 
$H_1\ldots,H_k$ of $H$. 
Consider an auxiliary graph $\cA_k$, with a vertex
$a_i$ for each 
$H_i$, with $i\le k$. Two such vertices $a_i$ and $a_j$, 
are neighbors
if their corresponding $H_i$ and $H_j$ 
share at least one (red or black) edge. 

Naturally, each 
connected component $\cC$ of $\cA_j$ corresponds
to an edge-colored graph   
\[
C=\bigcup _{a_i\in\cC}  H_i^*,
\]
which we refer to as an  {\it component} existing
after the $j$th step of the REA. 
Note that all such components are (red and black) edge disjoint 
(but can share vertices).

\subsubsection{REA step types}

We saw that a TWG $T$ of size $\sigma$ 
has $\lambda \sigma+1$ edges. 
It turns out that when $H$ is a clique  
(see \cite{BBM12,BK24}) this is the minimal 
possible number of edges in a WG.
As such we make the following definition.

\begin{definition}[Excess edges]
We call 
\[
\chi(W)=e(W)-(\lambda \sigma(W)+1)\ge 0.
\]
the {\it excess number of (black) edges} in $W$. 
\end{definition}

In order to study the combinatorics of WGs with
a given excess $\chi$,
it will be helpful to differentiate between various types of steps in the REA.

\begin{definition}[Tree step (TS)]
The $i$th step of the REA, when $H_i$ is added, 
is called a {\it tree step (TS)} 
if it starts a new component or 
if all of the components 
being merged with $H_i$ contain a single edge
in $H_i$ and are vertex disjoint 
outside of $H_i$.
\end{definition}

\begin{definition}[Internal red step (IntR)]
The $i$th step of the REA, when $H_i$ is added, 
is called an {\it internal red step (IntR)} if  (i) $H_i$ merges 
with a single existing component $C$, (ii) $V(H_i)\subset V(C)$, 
and (iii) only a red edge is added to $C$.
\end{definition}

In other words, an IntR step can be the first step that occurs when running the 
$K_r$-dynamics internally on some component $C$.

\begin{definition}[Costly steps]
The $i$th step of the REA is called a {\it costly step} 
if it is neither a TS nor an IntR. 
The associated {\it cost} $\kappa_i$ to the step 
is $r=v(H)$ plus the number of vertices 
(if any) 
outside $H_i$
that are in at least 
two of the components being merged
with $H_i$. 
If the $i$th step is not costly (i.e., TS or IntR)
then we 
put $\kappa_i=0$. 
The {\it total cost} of $W$ is 
\[
\kappa(W)=\sum_{i=1}^m \kappa_i. 
\]
\end{definition}

The term {\it cost} is justified by the next result 
which follows directly by the 
proof of Lemma 12 in \cite{BK24}.

\begin{lemma}[$\chi$ is $\Omega(\kappa)$]
\label{L_chikappa}
Let $H=K_r$, with $r\ge5$. 
Then its excess number of (black) edges in $W$ 
satisfies 
\[
\chi(W)\ge \xi \kappa(W),
\] 
where $\xi=\xi(H)>0$ is some constant, 
depending only on $H$. 
\end{lemma}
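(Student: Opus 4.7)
The plan is to analyze $W$ via the REA, using the strict-balancedness of $K_r$ (Lemma~\ref{L_SB}), which holds for $r \ge 5$. Summing $\chi(W) = e(W) - \lambda \sigma(W) - 1$ across the $m$ REA steps gives the identity
\[
\chi(W) = \sum_{i=1}^m (\lambda y_i - z_i) - (2\lambda-1)(m-1),
\]
where $y_i$ and $z_i$ denote the numbers of vertices and edges of $H_i$ already present in $\bigcup_{j<i} H_j$. Combining with the identity $\sum_i c_i = m-1$ (forced because the REA starts with zero components and ends with the single connected graph $W$), this rewrites as $\chi(W) = \sum_i h_i$, where
\[
h_i := \lambda y_i - z_i - (2\lambda-1)c_i.
\]

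I would then classify each REA step and bound $h_i$ accordingly. Direct substitution gives $h_i = 0$ for every IntR step and $h_i = \lambda y_i \ge 0$ for every TS step that starts a new component. For a TS step merging $c_i$ components, each via a single shared edge, one has $h_i = \lambda(y_i - 2c_i)$, which is non-negative when the shared edges are pairwise vertex-disjoint inside $H_i$. For a costly step, I would apply Lemma~\ref{L_SB} to the old subgraph $F_i \subseteq H_i \setminus e_i$, either globally or component-by-component depending on the configuration, to obtain a positive lower bound of order $\xi_0 c_i$ on $h_i$ in most cases, where $\xi_0 = \xi_0(r) > 0$ comes from Definition~\ref{D_xi}.

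The main obstacle is the global amortization required because per-step bounds can fail in two configurations: TS merges with shared hinge vertices inside $H_i$ (where $h_i < 0$), and costly steps with $c_i \ge 2$, $z_i = c_i$, and outside overlap $w_i \ge 1$ (where $h_i$ may be zero but $\kappa_i = r + w_i > 0$). To handle these, I would trace every overlap vertex $v$ back to the earliest REA step $k$ at which $v$ first appears in a second copy $H_k$: at that step $v$ contributes $+1$ to $y_k$ and hence $+\lambda$ to $h_k$, yielding ``credit'' that is not otherwise consumed by the structure of step $k$. Matching these credits to the later deficits in a careful bookkeeping, following the approach of \cite[Lemma~12]{BK24}, would give $\sum_i h_i \ge \xi \, \kappa(W)$ for some $\xi = \xi(r) > 0$, which is the desired inequality. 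The strict-balancedness of $K_r$ is used crucially throughout, and without the $r \ge 5$ hypothesis one cannot extract the required positive $\xi_0$ from Definition~\ref{D_xi}.
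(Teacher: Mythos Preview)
The paper does not give its own proof: it simply cites \cite[Lemma~12]{BK24}, noting that the argument there is \emph{inductive on components}, showing $\chi(C)\ge\xi\kappa(C)$ for every component $C$ at every time during the REA. Your approach is genuinely different in framework: you set up a global telescoping identity $\chi(W)=\sum_i h_i$ with $h_i=\lambda y_i-z_i-(2\lambda-1)c_i$, and then try to bound the sum step by step. Your identity is correct (the computation $\binom{r}{2}-1-\lambda r=1-2\lambda$ via $\lambda(r-2)=\binom{r}{2}-2$ checks out), and your classification of the step contributions---$h_i=0$ for IntR, $h_i=\lambda(y_i-2c_i)$ for merging TS, positive gain from Lemma~\ref{L_SB} in generic costly steps---is accurate.

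The gap is exactly where you flag it: the amortization. You correctly identify that TS merges can have $h_i<0$ when the hinge edges inside $H_i$ share vertices, and that certain costly steps with outside overlaps give $h_i$ too small relative to $\kappa_i$. Your proposed fix---tracing each overlap vertex back to the first step where it is re-introduced and harvesting a $+\lambda$ credit there---is plausible, but you have not shown that these credits are not already committed (for instance, that re-introduction step could itself be costly and need its full $h_k$ to cover its own $\kappa_k$, or the same credit could be claimed by several later deficits). You then defer this bookkeeping back to \cite{BK24}, which somewhat defeats the purpose of the alternative setup. The inductive component-wise argument the paper points to is arguably cleaner precisely because it localizes: one only needs to check that a single merge preserves the inequality, with the pre-merge components already satisfying it, so no cross-step credit matching is required. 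If you want your global-sum route to stand on its own, you would need an explicit injective assignment of deficits to credits.
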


In other words, each costly step 
is associated with an irreversible contribution
to the excess $\chi$. 

We note that the above bound 
follows by the inductive proof of 
Lemma 12 in \cite{BK24}, 
which shows 
that this property 
holds for all components at any given time of the REA. 
(Naturally, we can extend the definitions of $\sigma$, $\chi$ 
and $\kappa$ to components $C$.)
Since 
there is a single component
by the end of the process, the result follows. 

Although there is a ``price'' to be paid for 
each costly step, they can also lead to the 
spread of (zero-cost) IntR steps. Indeed, 
the main challenge in analyzing general WGs 
is the delicate interplay between costly
and IntR steps.

\subsection{Tree Components}
Our proof strategy 
for the lower bound hinges on the following ``stability'' property: 
a WG with a small excess is ``close", 
in some combinatorial sense, to a TWG. 
A basic version of this property is proved in Section \ref{S_LBuptoC},
and a more refined version in Section \ref{S_sharpLB}.
For this purpose, we give the following key definitions:

\begin{definition}[Tree components]
\label{D_TCs}
A component $C$ which exists
after some step of the REA, for a witness graph $W$,  
that was formed with only 
TSs is called a {\it tree component (TC)}. 
\end{definition}

The union of black and
red edges in a TC
is of the following form.

\begin{definition}[$K_r$-tree]
\label{D_Kr_tree}
We call a graph $T$ a {\it $K_r$-tree}
(or a {\it clique tree})
of {\it order} $\vartheta$ if it is 
the union of $\vartheta$
many copies $H_1,\ldots,H_\vartheta$ of $K_r$, 
and that, 
for every $1<i\le \vartheta$, $H_i$ shares exactly 
one edge with $H_1 \cup\cdots \cup H_{i-1}$.
We call an edge $e\in E(G)$ an {\it internal edge} 
of $T$ if it is contained in at least two of the copies of $K_r$. 
\end{definition}

\begin{remark}
In Section \ref{S_UB}, we used
the letter $T$ to refer to TWGs in the $K_r$ dynamics. 
From now on, we use $T$
to refer to
$K_r$-trees. 
These structures are clearly related. Specifically, 
the union of red and black edges
in a TWG is a $K_r$-tree. In general, every TC is a 
$K_r$-tree whose edges are colored red or black. 
Conversely,  every $K_r$-tree can be attained as a 
TC via some edge coloring. 
\end{remark}

The following claim regarding the structure of 
$K_r$-trees will be useful throughout the 
remainder of this article.

\begin{claim}\label{clm:Kr_tree_structure}
Suppose that a $K_r$-tree $T$ is 
composed of $\vartheta$ copies $H_1,...,H_\vartheta$ of $K_r$
(as in Definition \ref{D_Kr_tree}). 
Then, we have the following. 
\begin{enumerate}
\item Every clique in $T$ is contained in some $H_i$.
\item If vertices $x\neq y$ have at least three 
common neighbors in $T$ then they are neighbors.
\end{enumerate}
\end{claim}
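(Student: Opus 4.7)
The plan is to exploit the tree-decomposition structure of a $K_r$-tree. I would define an auxiliary tree $\mathcal{T}$ on the node set $\{1,\ldots,\vartheta\}$ by joining each $i>1$ to some $p(i)<i$ for which the (unique) shared edge $E(H_i)\cap E(H_1\cup\cdots\cup H_{i-1})$ lies in $H_{p(i)}$. For each vertex $v$ of $T$, put $S(v)=\{i:v\in V(H_i)\}$. The key technical lemma, which I would prove by induction on $\vartheta$, is that every $S(v)$ is a connected subtree of $\mathcal{T}$. The inductive step for $i=\vartheta$ reduces to showing that any vertex of $V(H_\vartheta)\cap V(H_1\cup\cdots\cup H_{\vartheta-1})$ must be an endpoint of the shared edge, and therefore also a vertex of $H_{p(\vartheta)}$; this is the one place where the precise definition of a $K_r$-tree is invoked, and it also yields as a byproduct that $|V(H_i)\cap V(H_j)|\le 2$ for every $i\ne j$, with equality when $i,j$ are adjacent in $\mathcal{T}$. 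I expect verifying this subtree property cleanly to be the main (if minor) technical obstacle.

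Once the subtree property is in hand, observe that $u,v$ are adjacent in $T$ exactly when $S(u)\cap S(v)\ne\emptyset$, since every $H_i$ is a clique. Part (1) then follows from the classical Helly property for subtrees of a tree: if $K$ is a clique in $T$, the subtrees $\{S(v):v\in V(K)\}$ pairwise intersect, so they share a common node $i$, giving $V(K)\subseteq V(H_i)$.

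For part (2), suppose towards a contradiction that $x\ne y$ have three common neighbors $a,b,c$ but are not adjacent, so that $S(x)\cap S(y)=\emptyset$. Let $P=(i_0,\ldots,i_\ell)$ be the unique shortest path in $\mathcal{T}$ with $i_0\in S(x)$, $i_\ell\in S(y)$, and $\ell\ge1$. Each of $S(a),S(b),S(c)$ meets both $S(x)$ and $S(y)$, so by connectedness each one contains all of $P$. Consequently $\{a,b,c\}\subseteq V(H_{i_0})\cap V(H_{i_1})$, but adjacent nodes in $\mathcal{T}$ share only two vertices, contradicting $|\{a,b,c\}|=3$. Hence $S(x)\cap S(y)\ne\emptyset$, and $xy$ is an edge of $T$.
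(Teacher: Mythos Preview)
Your proof is correct and takes a genuinely different route from the paper. The paper argues by direct induction on $\vartheta$, repeatedly using the single observation that any vertex in $V(H_\vartheta)\setminus V(T')$ has all of its neighbors inside $H_\vartheta$; both parts of the claim are then handled by short case analyses on whether the relevant vertices lie in $T'=H_1\cup\cdots\cup H_{\vartheta-1}$ or not. Your approach instead builds the tree decomposition explicitly: you introduce the auxiliary tree $\mathcal{T}$, establish the subtree property for the sets $S(v)$, and then deduce part~(1) from the Helly property of subtrees and part~(2) from a bridge-path argument between the disjoint subtrees $S(x)$ and $S(y)$. Your method is more structural and makes transparent that a $K_r$-tree is a chordal graph whose clique tree is precisely~$\mathcal{T}$; the paper's argument is shorter and avoids setting up this machinery. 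One small point worth making explicit: your key step---that any vertex of $V(H_\vartheta)\cap V(T')$ must be an endpoint of the shared edge---relies on reading ``shares exactly one edge'' in Definition~\ref{D_Kr_tree} as ``the intersection is exactly a single edge (two vertices and one edge).'' The paper uses this same reading (e.g., when it asserts that at most two common neighbors lie in $V(H_\vartheta)\cap V(T')$), so you are on solid ground, but it is worth flagging since the phrasing could in principle be read more weakly.
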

\begin{proof}
We prove this by induction on the order $\vartheta$ of $T$. 
The base case $\vartheta=1$ is trivial. We assume that the 
claim holds for all $K_r$-trees of order $\vartheta-1$ and 
that $T$ is a $K_r$-tree of order $\vartheta$. 
Let  $T'=\cup_{i<\vartheta}H_i$. 
We use the following observation: 
if a vertex belongs to 
$V_\vartheta=V(H_\vartheta)\setminus V(T')$ 
then all its neighbors are in $H_\vartheta$.

To prove the first claim, 
suppose that some vertices $v_1,...,v_j$ form a clique in $T$. 
If they are all in $T'$ then the claim follows by induction. 
Otherwise, one of these vertices is in $V_\vartheta$, 
whence all of them are in $H_\vartheta$, as claimed.

For the second claim, consider 
$x,y$ and their common neighbors $v_1,...,v_j$, for some $j\ge 3$.
If they all belong to $T'$ then the claim follows by induction. 
Therefore, at least one them is in $V_\vartheta$. 
If a common neighbor, say $v_1$, is in $V_\vartheta$, then 
$x,y \in V(H_\vartheta)$ and are therefore neighbors. 
Otherwise, we may assume that $x\in V_\vartheta$, 
whence $v_1,...,v_j$ belong to $V(H_\vartheta)$. 
At most $2$ of these common neighbors lie in the intersection 
of $H_\vartheta$ and $T'$, and since $j\ge 3$ 
at least one of them is in $V_\vartheta$, 
in which case, as we have already shown, 
it follows that $x,y$ are neighbors.
\end{proof}

\begin{lemma}\label{lem:Kr_stable}
If $T$ is a $K_r$-tree then $\langle T\rangle_{K_r}=T$, 
i.e.,
$T$ is {\it $K_r$-stable}.
\end{lemma}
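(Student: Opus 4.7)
The plan is to proceed by contradiction, leveraging part (2) of Claim \ref{clm:Kr_tree_structure} together with the assumption that $r \ge 5$. Suppose some edge $e = xy$ lies in $E(\langle T\rangle_{K_r})\setminus E(T)$ and is added in the very first round of the $K_r$-dynamics applied to $T$; then by definition of the dynamics, there exist vertices $v_1,\ldots,v_{r-2}$ such that the set $\{x,y,v_1,\ldots,v_{r-2}\}$ spans a copy of $K_r$ in $K_n$ with every edge except $xy$ already present in $T$. In particular, each $v_i$ is a common neighbor of $x$ and $y$ in $T$, so $x$ and $y$ share at least $r-2$ common neighbors.

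Since $r \ge 5$, we have $r-2 \ge 3$, so Claim \ref{clm:Kr_tree_structure}(2) applies and forces $x$ and $y$ to be neighbors in $T$, contradicting $xy \notin E(T)$. This shows no edge is added in the first round; since $T$ remains a $K_r$-tree (trivially) after no change, iterating the argument gives $\langle T\rangle_{K_r}=T$.

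I expect no real obstacle here: the whole argument is a one-line consequence of Claim \ref{clm:Kr_tree_structure}(2), which was formulated precisely to enable this sort of stability statement. The only thing worth flagging is the dependence on $r\ge 5$: for $r=4$ the bound $r-2=2$ falls below the threshold of three common neighbors required by the claim, and indeed in that regime $K_r$-trees genuinely fail to be $K_r$-stable (they trigger the nucleation phenomenon discussed in Section~\ref{S_nuce}). Since the section has already fixed $r\ge 5$, this hypothesis is available for free, and part (1) of the claim is not needed in this particular proof.
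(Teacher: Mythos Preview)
Your proof is correct and takes essentially the same approach as the paper: both arguments observe that if all edges of a copy of $K_r$ except $xy$ lie in $T$, then $x$ and $y$ have $r-2\ge 3$ common neighbors in $T$, and then apply Claim~\ref{clm:Kr_tree_structure}(2) to conclude $xy\in E(T)$. The paper phrases this directly rather than by contradiction, but the content is identical.
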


\begin{proof}
If $H$ is a copy of $K_r$ and $H\setminus e \subset E(T)$ 
then the endpoints of $e$ have  $r-2\ge 3$ common neighbors 
and  $e\in E(T)$ by Claim \ref{clm:Kr_tree_structure}(2). 
Therefore, $T$ is $K_r$-stable,  as claimed.
\end{proof}

\subsection{Target edges}

Note that, in the REA, all of the red edges $e_i$ are added 
along the way towards eventually adding $e=e_m$. 
As such, we make the following definition. 

\begin{definition}[Target edges]
After the $k$th step of the REA for $W$, we call a red edge $e_i$, 
for some $i\le k$, a {\it target
edge} if it has not yet been {\it reused}, meaning that 
$e_i\notin E(H_j)$, for all $i<j\le k$. 
We call $e=e_m$ the {\it final target edge} of $W$. 
\end{definition} 

Note that, after the $k$th step, 
the most recent red edge $e_k$
is a target edge, but there can be more. 
However, by induction 
(recalling \eqref{E_WGdef} above), after the last $m$th step
of the REA, the final target edge $e=e_m$ is the only
remaining target edge. 
This observation, although quite simple, will play a crucial role
in our arguments. For instance, we use it now to show that TWGs 
are the only WGs with no excess $\chi=0$. This is a first step towards 
our goal of showing that a WG with a small excess is similar to a TWG. 

\begin{lemma}\label{lem:TC_with_one_target}
If $C$ is a TC that exists after some step of the REA with a unique target 
edge $e$ then $C$ is a TWG for $e$. In particular, if every step in the 
REA of a witness graph $W$ is a TS, then $W$ is a TWG.
\end{lemma}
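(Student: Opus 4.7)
The plan is to prove the main statement by induction on the order $\vartheta$ of $C$ as a $K_r$-tree; the ``in particular'' clause then follows by applying the main statement to the final component of the REA of $W$, whose unique target is $e_m=e$ since every earlier red edge is, by the very construction of the REA for $W^{(e)}$, reused in some later step. The base case $\vartheta=1$ is immediate: $C$ consists of a single copy of $K_r$ in which $e$ is the sole red edge, so the black subgraph of $C$ is $K_r\setminus\{e\}$, which is the 1-TWG for $e$ per Definition~\ref{D_TWGroot}(2).

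For the inductive step with $\vartheta\ge 2$, I would let $H_*$ denote the latest copy (in REA order) contributing to $C$. Since $\vartheta\ge 2$, the addition of $H_*$ must be a TS that merges some $s\ge 1$ sub-components $C_1,\dots,C_s$, each $C_i$ a TC of strictly smaller order sharing exactly one edge $f_i$ with $H_*$, with the $C_i$ pairwise vertex-disjoint outside $V(H_*)$. The heart of the argument is a target-counting identity. Writing $T(D)$ for the number of target edges in a component $D$, every TC $D$ satisfies $T(D)\ge 1$: the $\vartheta(D)$ copies of $K_r$ contribute $\vartheta(D)$ distinct red edges, while only $\vartheta(D)-1$ internal edges of the $K_r$-tree are available to be reused. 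Letting $\tau_i:=\mathbf{1}\{f_i\text{ was a target of }C_i\text{ before step }H_*\}$, the balance of targets across step $H_*$ reads
\[
1=T(C)=\sum_{i=1}^{s}T(C_i)+1-\sum_{i=1}^{s}\tau_i,
\]
and the bounds $T(C_i)\ge 1$, $\tau_i\le 1$ force equality throughout: $T(C_i)=1$ and $\tau_i=1$ for every $i$. Hence each $C_i$ has unique target $f_i$, which in particular is red, and $e$ itself must be the new red edge of $H_*$, so $e\in E(H_*)$.

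To close the induction I would apply Definition~\ref{D_TWGroot}(2) with root $H^{(e)}:=H_*$. For each shared edge $f_i\in E(H_*)\setminus\{e\}$, the inductive hypothesis furnishes the black subgraph of $C_i$ as a TWG $T_{f_i}$ for $f_i$. For each of the remaining $\binom{r}{2}-1-s$ edges $f$ of $E(H_*)\setminus\{e\}$, which are necessarily black (the red edges of $H_*$ are exactly $e$ and the $s$ shared edges $f_i$), take $T_f:=f$, a 0-TWG. Pairwise vertex-disjointness outside $H_*$ is precisely the TS condition for the shared branches and is trivial for the exclusive ones, and the union $\bigcup_{f\in E(H_*)\setminus\{e\}}T_f$ reproduces the black subgraph of $C$. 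Thus $C$ is a TWG for $e$, completing the induction. The main conceptual obstacle is the target-counting step: it is essential that the single identity $T(C)=1$ simultaneously yields (i) the uniqueness of the target of each $C_i$, (ii) the identification of that target as the shared edge $f_i$, and (iii) the fact that $f_i$ is red rather than a pre-existing $G$-edge --- these three properties together are exactly what allow the induction to proceed.
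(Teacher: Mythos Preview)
Your proof is correct and follows essentially the same approach as the paper's: induction on the number of steps forming $C$, identifying the last copy $H_*$ as the root, and showing each merged sub-component $C_i$ has its shared edge $f_i$ as unique target so that the inductive hypothesis applies. Your target-counting identity $T(C)=\sum_i T(C_i)+1-\sum_i\tau_i$ makes explicit what the paper compresses into the single phrase ``as otherwise $C$ would have another target edge besides $e$,'' but the logic is identical. One minor imprecision: your justification of $T(D)\ge 1$ via ``only $\vartheta(D)-1$ internal edges are available to be reused'' conflates internal edges with merge events (the same edge can be shared by three or more copies, giving fewer than $\vartheta-1$ distinct internal edges); the cleaner statement is that there are exactly $\vartheta-1$ merge events, each reusing at most one red edge, or simply that the latest red edge is always a target.
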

\begin{proof}
We proceed by induction of the number $k$ of TSs used to form $C$. 
If $k=1$ the claim is trivial. Otherwise, $C$ is formed by a copy $H^{(e)}$ 
of $K_r$ that has a single edge in common with each of the components 
$C_1,...,C_h$ that are being merged. Observe that $C_1,...,C_h$ are TCs, 
since $C$ is a TC. Additionally, the edge $e_i \in E(C_i)\cap E(H)$, 
for each $1\le i\le h$, is the unique target edge of $C_i$, 
as otherwise $C$ would have 
another target edge besides $e$. 
Therefore, each $C_i$ is a TWG for $e_i$.
By 
Definition \ref{D_TWGroot}, we find that $C$ is a TWG for $e$, as required. 
\end{proof}

\begin{corollary}\label{cor:TWG_only_minimizers}
For every witness graph $W$ 
we have that 
$\chi(W)=0$ if and only if $W$ is a TWG.
\end{corollary}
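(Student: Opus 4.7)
The plan is to handle the two directions separately. The forward direction, that every TWG has zero excess, is an immediate computation: by Lemma \ref{clm:kTWG_edges}, a $k$-TWG $T$ satisfies $\sigma(T)=v(T)-2=(r-2)k$ and $e(T)=\lambda(r-2)k+1=\lambda\sigma(T)+1$, so $\chi(T)=0$. All of the substance is in the converse.

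For the converse, my strategy is to show that if $\chi(W)=0$ then every step of the REA constructing $W$ is a tree step, after which the ``in particular'' part of Lemma \ref{lem:TC_with_one_target} finishes the argument. By Lemma \ref{L_chikappa}, $\chi(W)\ge\xi\kappa(W)$ with $\xi>0$, so $\chi(W)=0$ forces $\kappa(W)=0$; hence no step is costly, and each step of the REA is either a TS or an IntR. It therefore suffices to rule out IntR steps.

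To do this, I would argue by contradiction. Let $i$ be the first IntR step; since no costly step ever occurs and no IntR step precedes $i$, every step $j<i$ is a TS, so every component present after step $i-1$ is a tree component. In particular, the component $C$ into which $H_i$ is merged at step $i$ is a $K_r$-tree in the sense of Definition \ref{D_Kr_tree} (as noted in the text, this follows directly from the structure of TCs, possibly after reordering the constituent copies of $K_r$), and so by Lemma \ref{lem:Kr_stable} we have $\langle C\rangle_{K_r}=C$. The defining conditions of an IntR step give $V(H_i)\subset V(C)$ and $E(H_i)\setminus\{e_i\}\subset E(C)$, where $e_i$ is the new red edge added in step $i$, and $K_r$-stability applied to the copy $H_i$ then forces $e_i\in E(C)$. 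This contradicts the fact that $e_i$ is genuinely new, since $e_i\notin\bigcup_{j<i}E(H_j)$ while $E(C)\subset\bigcup_{j<i}E(H_j)$. Hence no IntR step can occur, every step is a TS, and Lemma \ref{lem:TC_with_one_target} yields that $W$ is a TWG. The only subtlety I anticipate is the small bookkeeping needed to confirm that a TC is indeed a $K_r$-tree when a single TS merges several components at once; once that is in hand, the stability lemma closes the loop immediately.
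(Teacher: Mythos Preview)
Your proof is correct and follows essentially the same approach as the paper: use Lemma~\ref{L_chikappa} to rule out costly steps, then argue that the first IntR step would occur in a tree component, contradicting the $K_r$-stability of $K_r$-trees (Lemma~\ref{lem:Kr_stable}), and conclude via Lemma~\ref{lem:TC_with_one_target}. The paper's proof is simply a terser version of the same argument, and the subtlety you flag about TCs being $K_r$-trees is addressed in the text immediately following Definition~\ref{D_Kr_tree}.
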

\begin{proof}
The fact that $\chi(W)=0$ if $W$ is a TWG is clear. 
In the other direction, recall that if $\chi=0$ then all steps are 
either TS or IntR steps. 
However, in this case, if there was ever 
an IntR step during the REA,  then the first such step occurs in a TC, 
in contradiction to Lemma \ref{lem:Kr_stable}. Therefore, $W$ is a 
TC whose REA uses only tree steps, an so it is a TWG by  
Lemma \ref{lem:TC_with_one_target}.
\end{proof}

\section{Coarse lower bound}
\label{S_LBuptoC}

Since the full proof of the lower bound
in Theorem \ref{T_pc} is somewhat involved, 
we will first give an argument that 
proves that $p_c=\Omega(n^{-1/\lambda})$. 
These arguments already contain 
some of the main ideas, 
and, together the upper bound in Section \ref{S_UB}, 
answers Problem 3 in \cite{BBM12}.
(We caution the reader 
that the proof of the sharp lower bound in 
Section \ref{S_sharpLB} is not self-contained,  
but rather builds on the arguments in this section.)

\begin{proposition}[Coarse lower bound]
\label{P_pcLBcoarse}
For $r\ge5$, we have that 
\[
p_c(n,K_r)= \Omega(n^{-1/\lambda}). 
\]
\end{proposition}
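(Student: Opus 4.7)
Fix an edge $e \in E(K_n)$. I will show that if $p = c\, n^{-1/\lambda}$ for a sufficiently small constant $c = c(r) > 0$, then $\P(e \in E(\langle \Gnp\rangle_{K_r})) = o(1)$, which immediately yields $p_c(n, K_r) \ge c\, n^{-1/\lambda}$. Writing $M = \beta \log n$ for a large constant $\beta$ to be chosen, and applying the Aizenman--Lebowitz property (Lemma~\ref{L_AL}), the event that $e$ has a WG in $\Gnp$ is contained in the union of (a) $e$ having a WG of size $\sigma < M$; and (b) some edge $f \in E(K_n)$ having a WG of size $\sigma \in [M, \binom{r}{2} M]$. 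The first I will control by a first-moment estimate for the fixed $e$; the second by a union bound over $f$.

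\textbf{Key counting bound.} The technical core is a bound of the form
\[
N(\sigma,\chi) \le \sigma^{C\chi}\, \gamma^\sigma\, \sigma!
\]
on the number of labelled WGs on a given $(\sigma+2)$-vertex set for a given target edge, with excess $\chi$, where $C = C(r)$. For $\chi = 0$, Corollary~\ref{cor:TWG_only_minimizers} identifies these WGs as TWGs and Lemma~\ref{lem:count_TWG} supplies the bound. For $\chi \ge 1$, I will construct the \emph{tree decomposition} outlined in Section~\ref{S_LBuptoC}: as the REA for a WG $W$ unfolds, each current component is partitioned into a ``bad'' part and a collection of maximal ``tree parts'' --- $K_r$-trees (Definition~\ref{D_Kr_tree}) that generalise TWGs by allowing several unused target edges. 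The invariants to be maintained are (i) every internal-red step is absorbed into the bad part, so the tree parts remain $K_r$-trees; and (ii) both the total size of the bad part and the number of maximal tree parts are $O(\chi)$, using that $\chi \ge \xi\kappa$ by Lemma~\ref{L_chikappa}. Given the decomposition, a WG is specified by choosing the bad part ($\sigma^{O(\chi)}$ labelled configurations, since its vertex set has size $O(\chi)$), distributing the remaining vertices among the $O(\chi)$ tree parts ($\sigma^{O(\chi)}$ ways), and assembling each tree part as a TWG-like object (contributing $\gamma^{\sigma'}\sigma'!$ in total via Lemma~\ref{lem:count_TWG}).

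\textbf{Putting it together.} With the counting bound in hand, the expected number of WGs for the fixed edge $e$ of size $\sigma < M$ and excess $\chi$ is at most
\[
\binom{n-2}{\sigma}\, N(\sigma,\chi)\, p^{\lambda\sigma + 1 + \chi}
\;\le\; p\,(\gamma n p^\lambda)^\sigma\,(\sigma^C p)^\chi.
\]
Choosing $c$ so that $\gamma n p^\lambda = c^\lambda \gamma < 1$ makes the sum over $\sigma$ geometrically convergent, and since $\sigma^C p \le (\log n)^C\cdot c\, n^{-1/\lambda} = o(1)$, the sum over $\chi$ is also bounded. The total contribution of (a) is $O(p) = o(1)$. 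For the medium-size alternative (b), the same estimate combined with the union bound over $f$ gives
\[
n^2 \cdot \sum_{\sigma = M}^{\binom{r}{2} M}\sum_{\chi \ge 0} p\,(c^\lambda\gamma)^\sigma(\sigma^C p)^\chi
\;=\; O\!\left(n^{2 - 1/\lambda}(c^\lambda\gamma)^M\right),
\]
which is $o(1)$ provided $\beta$ is chosen so that $\beta\log(1/(c^\lambda\gamma)) > 2 - 1/\lambda$.

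\textbf{Main obstacle.} The crux of the argument is the construction and verification of the tree decomposition, specifically invariants (i) and (ii). Showing that every IntR step is absorbed into the existing bad part --- so that the tree parts remain untouched by zero-cost propagation --- relies on the $K_r$-stability of $K_r$-trees (Lemma~\ref{lem:Kr_stable}), which is precisely where the assumption $r \ge 5$ enters; for $r = 4$, TWGs themselves induce additional edges and the analogue fails, consistent with the fact that $\Omega(n^{-1/\lambda})$ is not the right lower bound at $r=4$ (see Section~\ref{S_Discussion}). A further subtlety is an amortised accounting that tracks how each costly REA step enlarges the bad part by at most $O(1)$ vertices and creates at most $O(1)$ new maximal tree parts, so that the $O(\chi)$ bounds in (ii) propagate through the entire execution of the REA; this is where Lemma~\ref{L_chikappa} (giving $\chi \ge \xi\kappa$) is decisive.
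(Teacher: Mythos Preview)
Your overall strategy---tree decomposition to control the REA, Aizenman--Lebowitz to localise to logarithmic sizes, then first moment---matches the paper's. But your counting bound $N(\sigma,\chi)\le \sigma^{C\chi}\gamma^\sigma\sigma!$ is an overclaim at this stage, and the justification you give for it does not work. The tree parts produced by the decomposition are \emph{tree components} (edge-coloured $K_r$-trees that may carry many target edges), not TWGs; Lemma~\ref{lem:count_TWG} counts only the latter. The number of unlabelled TCs on $x$ vertices grows like $A^x$ for some $A>\gamma$ (there are exponentially more TCs than TWGs), so the tree decomposition alone yields only $N(\sigma,\kappa)\le A^\sigma\sigma!\,\sigma^{O(\kappa)}$, which is exactly what the paper proves as Lemma~\ref{L_ubWs}. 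Obtaining the base $\gamma$ is the content of Section~\ref{S_sharpLB}: one must show each maximal tree part has only $O(\kappa)$ target edges (Lemma~\ref{L_targets_in_maxT}, via the comparison with $(r-2)_*$-bootstrap percolation) and then count TCs with few targets (Claim~\ref{clm:num_TC_with_few_targes}). For the \emph{coarse} bound this does not matter---replace $\gamma$ by $A$ throughout and take $c<A^{-1/\lambda}$---but your invocation of Lemma~\ref{lem:count_TWG} is a genuine gap as written.

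A secondary inaccuracy: the bad part $B$ in the tree decomposition is \emph{not} of size $O(\chi)$, because compromised tree parts (which can be large) are absorbed into $B$. What the paper actually establishes is that there are $O(\kappa)$ \emph{maximal} tree parts (including compromised ones), their total vertex count is $\sigma+O(\kappa)$, and every black edge lies either in one of them or in one of $O(\kappa)$ extra copies of $K_r$ coming from costly and bad tree steps (see Corollaries~\ref{C_omega}, \ref{C_TP_sizes} and the proof of Lemma~\ref{L_ubWs}). Your reconstruction should glue together these $O(\kappa)$ maximal tree parts plus $O(\kappa)$ extra cliques, not a small bad part plus tree parts.
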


We note that Proposition \ref{P_pcLBcoarse} fails if $r=4$, where  
$p_c$ is of order $(n\log n)^{-1/\lambda}$ (as discussed in Section \ref{S_Intro}). 
We point out that the assumption that $r\ge 5$ is indeed used in 
Lemma \ref{lem:Kr_stable} above and Claim \ref{C_KrDyn_on_union} below, 
where both claims playing a key role in controlling the spread of IntR steps.

\subsection{Tree decomposition}
\label{S_CTD} 

Consider the witness graph $W$ for 
some edge $e$, as given by the REA. 
Recall (see Section \ref{S_component}) 
that after the $j$th step of the REA, 
we have an edge-disjoint union of components. 
Each such component $C$ (of red and black edges) 
can be written as 
\[
C=\bigcup _{a_i\in\cC} H_i^*,
\]
where $\cC$ is the connected component
associated with $C$ in the 
auxiliary graph $\cA_{j}$. 
Recall that the $H_i^*$ are the red and black edge-colored
copies of $K_r$ added in the REA. 

By the following recursive procedure, 
we will decompose each such $C$ into the following subgraphs: a 
{\it bad part} $B$ (possibly empty) and some number 
of {\it tree parts} $T_1,\dots,T_k$ (possibly $k=0$) . 
We will call this the {\it tree decomposition}
of $C$. In the sequel we will show, by induction on the number of 
steps in the REA, that the decomposition of every 
component $C$ satisfies the following properties:

\begin{enumerate}
\item The underlying graph (red and black edges) 
of every tree part is a $K_r$-tree. 
\item If $B=\emptyset$ then $k=1$ and $C=T_1$ is a TC.
\item If $B\neq\emptyset$ then 
\begin{enumerate}
\item the intersection graph $T_i \cap B$ of every tree part 
$T_i$ and $B$ consists of a single edge that is not a target edge of $C$ 
(and is called the {\it base} of $T_i$); and
\item every two tree parts intersect in at most one vertex, 
and if they do, that vertex lies in $B$. In particular, 
the tree parts are pairwise edge disjoint.
\end{enumerate}
\end{enumerate}

Since these properties are proved inductively, we can (and will) 
use them to define the recursive procedure that constructs 
the tree decomposition.

For the base case of the recursive construction, suppose that 
$C$ is a single copy of $K_r$ with one red edge and 
$\binom r2-1$ black edges. 
In this case, we simply put 
$B=\emptyset$ and 
$T_1=C$ . 

Otherwise, let us suppose that $C$
is formed by adding $H_j^*$
and merging it with some
previously defined components $C_1,\ldots,C_h$. 
Each such $C_i$ has a bad part $B_i$
and tree parts $T^{(i)}_1,\ldots,T^{(i)}_{k_i}$. 
The bad part $B$ and tree parts $T_1,\dots,T_k$
of $C$ are defined recursively, with respect to the following cases:

{\bf Tree step.} If $C$ is formed in a TS, we claim that there is at 
most one tree part in each $C_i$ that contains an edge in $H_j^*$. 
Indeed, if $B_i$ is empty then the claim is trivial by property (2). 
Otherwise, by property (3b) the tree parts of $C_i$ are pairwise edge 
disjoint and a TS can intersect with $C_i$ in at most one edge. 
Let $T$ be the $K_r$-tree that is formed by $H_j^*$ and the tree parts 
that intersect it ($T$ is indeed a $K_r$-tree since $C_1,...,C_h$ 
are vertex-disjoint outside of $H_j^*$, since we are considering a TS), 
and let $T_1,...,T_s$ denote all the other tree parts in $C_1,...,C_h$. 
Consider two subcases:
\begin{itemize}
\item {\bf Standard tree step.} If there is at most one non-empty bad part among  
$B_1,...,B_h$, then set $B:=\bigcup_{i=1}^h B_i$ and the tree parts of $C$ 
to be $T_1,...,T_s$ and the new tree part $T$.
\item {\bf Bad tree step.} Otherwise, there are at least two non-empty bad parts. 
In this case, we set $B:=(\bigcup_{i=1}^h B_i)\cup T$ 
and the tree parts of $C$ to be $T_1,...,T_s$.
\end{itemize}

We stress that in the case of a bad tree step, the $K_r$-tree 
$T$ is {\it not} considered to be a tree part of $C$, but is rather used 
only as an {\it auxiliary tree} in the construction
of its decomposition.

{\bf IntR step.} If $C$ is formed in an IntR step, 
then $h=1$ and all the edges in $H_j^*$ except the new red edge 
$e_j$
are already contained in $C_1$. 
In this case, we simply 
obtain $B$ from $B_1$ by adding $e_j$, 
and the tree parts of $C$ are those of $C_1$, unchanged.

{\bf Costly step.} Finally, suppose that $C$ is formed in a costly step. 
Let $V_j$ be the vertex set containing $V(H_j)$
and all vertices outside of $V(H_j)$ that are in at least 
two of the $C_i$ being merged with $H_j^*$. 
Recall that $\kappa_j=|V_j|$ is the cost of this step. 
We call $V_j$ the set of vertices {\it involved}
with this costly step. 

In this case, we let $B$ be the union of 
$H_j^*$ with $\bigcup_{i=1}^h B_i$ and 
all tree parts $T$ in some $C_i$ such that 
$V(T)\setminus V(B_i)$ contains a  
vertex in $V_j$. The tree parts of $C$ are all the tree parts $T$ 
of the various $C_i$ for which $V(T) \setminus V(B_i)$ is disjoint of $V_j$.

\subsection{Properties of the tree decomposition}
\label{S_PropsTCdecomp}
In this section, we prove the structural properties of the tree decomposition
discussed in the previous section. 

\begin{lemma}\label{lem:tree_decomp_struct}
After every step of the REA for a witness graph $W$, the tree decomposition 
of any given existing component $C$ 
satisfies the properties (1)--(3) listed in Section \ref{S_CTD} above.
\end{lemma}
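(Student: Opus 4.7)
The plan is to prove (1)--(3) simultaneously by induction on the number of REA steps used to form $C$. The base case, when $C$ is a single edge-colored copy of $K_r$, is immediate: the decomposition prescribes $B=\emptyset$ and $T_1=C$, a TC of order one, verifying (1) and (2), and leaving (3) vacuous. For the inductive step, I assume the properties hold for each of $C_1,\ldots,C_h$ being merged by $H_j^*$ in step $j$, with decompositions $(B_i;T^{(i)}_1,\ldots,T^{(i)}_{k_i})$, and perform a case analysis on the step type.

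\textbf{Tree step.} Since $H_j^*$ shares a single edge $f_i$ with each $C_i$, the induction hypothesis (3b) applied to $C_i$ ensures that at most one tree part of $C_i$ contains $f_i$, so the auxiliary tree $T$ formed by $H_j^*$ together with these attached tree parts is well defined. The TS hypothesis that the $C_i$'s are vertex-disjoint outside $H_j^*$, combined with (1) for each absorbed tree part, yields that $T$ is itself a $K_r$-tree. In the \emph{standard} sub-case (at most one non-empty $B_i$, say $B_1$), $B:=B_1$ (or $\emptyset$) and $T$ becomes a new tree part of $C$. Its base is either inherited from the attached tree part of $C_1$ (hence a single edge in $B_1$, non-target by induction) or equals $f_1\in B_1$ itself, which is non-target because it is just reused by $H_j$ in step $j$. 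In the \emph{bad} sub-case, $T$ is absorbed into $B$ and the remaining tree parts inherit their properties from the $C_i$'s. Property (3b), in either sub-case, follows by observing that any vertex shared by two unabsorbed tree parts must lie in $V(B)$: within a single $C_i$ this is immediate from induction, while across distinct $C_i$ and $C_{i'}$ the shared vertex lies in $V(H_j^*)$ and, being a vertex of an unabsorbed tree part of $C_i$, must lie either in $V(B_i)$ or in the vertex set of an absorbed tree part, both of which are contained in $V(B)$.

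\textbf{IntR and costly steps.} In an IntR step only the fresh red edge $e_j$ is added to $B_1$, and the tree parts are unchanged; since $e_j$ differs from every pre-existing edge, each base remains the unique intersection edge with the enlarged $B$ and is still non-target. In a costly step, $H_j^*$ together with all the $B_i$'s and every tree part whose vertices outside its own $B_i$ meet the involved set $V_j$ are absorbed into $B$. A surviving tree part $T^{(i)}_\ell$ has $V(T^{(i)}_\ell)\setminus V(B_i)$ disjoint from $V_j\supseteq V(H_j^*)$, so its intersection with the enlarged $B$ equals its original base inside $B_i$, verifying (3a); properties (1) and (3b) transfer directly from the induction hypothesis.

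The principal obstacle I anticipate is verifying (3a) in the standard tree step, where a genuinely new tree part $T$ is assembled and one must identify its base and certify that it is a single non-target edge. The key insight is a general reuse principle: every edge that enters $B$ has just been traversed by some copy of $K_r$---either explicitly by $H_j$ in the current step, or implicitly in an earlier step that already absorbed it into a bad part---and this reuse is exactly the event that strips the edge of target status. Once this reuse principle is codified by the induction hypothesis, the IntR, costly, and bad-tree cases become largely bookkeeping, and only the standard tree step requires genuinely new combinatorial work.
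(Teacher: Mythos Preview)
Your inductive framework and case split match the paper's, and your handling of the tree-step and costly-step cases is essentially correct in outline (though in the costly case you should also argue that a surviving tree part $T$ of $C_i$ cannot meet an \emph{absorbed} tree part $T'$ of the same $C_i$ outside $B_i$; this follows from the inductive (3b), but you do not invoke it).

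The real gap is in the IntR case. You write that since $e_j$ is a fresh edge, ``each base remains the unique intersection edge with the enlarged $B$.'' This does not follow. Property~(3a) says $T\cap B$ is a single edge as a \emph{graph}; what you must rule out is that an \emph{endpoint} of $e_j$ lies in $V(T)\setminus V(B_1)$ for some tree part $T$, since then $T\cap B$ would acquire a third vertex after $e_j$ is adjoined to $B_1$. Nothing in the definition of an IntR step prevents $V(H_j)$ from meeting $V(T)\setminus V(B_1)$ a priori: all we know is $V(H_j)\subset V(C_1)$ and $E(H_j\setminus e_j)\subset E(C_1)$. The paper closes this gap with two substantive ingredients you omit: first, $K_r$-trees are $K_r$-stable (Lemma~\ref{lem:Kr_stable}), so $V(H_j)\not\subset V(T)$; second, since $T$ and $(C_1\setminus T)\cup B_1$ meet in only two vertices (by the inductive (3)), Claim~\ref{C_KrDyn_on_union} forces $V(H_j)$ to lie entirely on one side, hence entirely outside $V(T)\setminus V(B_1)$. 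This is precisely the place where $r\ge 5$ is used in an essential way, and your ``reuse principle'' does not substitute for it.
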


\begin{proof}
The proof is by induction on the number of steps in the REA that are used 
to construct a component $C$.

Recall that, in the base case, $C$ is a single copy of $K_r$, 
$B$ is empty and $T_1=C$. 
Therefore properties (1)--(3) follow directly.

\textbf{Standard tree step.} Suppose that $C$ is formed in a standard TS. 
If all the $B_i$ are empty, then by induction, every $C_i$ is a TC, in which case 
$C$ is a TC and properties (1)--(3) follow directly. 

Otherwise, suppose that $B_1\ne\emptyset$ 
and the other $B_i$ are empty, 
whence $B=B_1$. In addition, $C_2,..,C_h$ are TCs 
using property (2) by induction, 
and are therefore contained in the new tree part $T$. 
Note that the intersection of $H_j$ and $C_1$ is a single edge $e$, 
which is either in some 
tree part, $T'$ say, of $C_1$,
or else in $B_1$ but in no tree part of $C_1$. 
In consequence, the other tree parts $T_1,...,T_s$ 
are all the tree parts of $C_1$ 
besides $T'$ (if it exists). In particular, since $T_1,...,T_s$ 
have not been effected by this 
step and $B=B_1$ has not changed, 
properties (1)--(3) hold for them by induction.

It remains to verify property (3) for $T$. Since $B=B_1\subseteq C_1$ and  
the current step is a TS, the intersection of 
$T$ and $B$ is equal to the intersection 
of $T'$ and $B$, if $T'$ exists, and otherwise it is equal to the edge $e$. 
In former case, property (3a) holds for $T$ by induction applied to $T'$. 
In the latter case, property (3a) holds, as the edge $e$
has been reused to form $T$. 
Next, for property (3b),
note that, 
similarly, the intersection of $T$ and $T_\ell$, for any $1\le \ell\le s$, 
is equal to the intersection of $T'$ and $T_\ell$, if $T'$ exists, 
and otherwise it is 
equal to the intersection of $e$ and $T_\ell$. 
In the former case, property (3b) holds by induction. In the latter case, 
by induction, 
$T_\ell$ intersects $B$ in a single edge. This edge cannot be $e$, 
by our assumption that $e$ is in no tree part of $C_1$. 
Therefore, $e$ and $T_\ell$ intersect in at most one vertex, 
which belongs to $B$ if it exists.

\textbf{Bad tree step.} Suppose that $C$ is formed in a bad TS. 
Recall that, in this case, the auxiliary $K_r$-tree $T$ is not a tree part of $C$, 
but is rather contained in its bad part $B$. As such, property (1) follows 
directly by induction since no new tree parts are created, 
and property (2) holds vacuously since $B\neq\emptyset$. 

Also recall that, in this case, $B$ is the union of 
$B_1,...,B_h$ and $T$. Let $1\le \ell \le s$, and suppose that $T_\ell$ 
was a tree part of $C_i$, for some $1\le i \le  h$. Let $X$ denote the 
intersection of $T$ and $C_i$. 
As we saw in the previous case of a standard tree step, 
$X$ is either a tree part $T'$ of $C_i$, or a single edge in $B_i$. 
The vertex intersection of $T_\ell$ with $B$ is comprised of its vertex 
intersection with $B_i\cup X$, since the current step is a tree step, 
and so no other vertex of $B$ belongs to $C_i$.
Furthermore, we claim that the intersection of $T_\ell$ 
with $B_i \cup X$ is equal to its intersection with $B_i$. 
Indeed, this is clear if $X$ is an edge of $B_i$. 
Otherwise, 
if $X$ is the tree part $T'$, 
we derive the claim using property (3b) by induction, 
which asserts that the intersection of $T_\ell$ and $T'$ is contained in $B_i$. 
Using this claim, property (3a) for $T_\ell$ follows by induction. 

For property (3b), let $1\le l\ne l'\le s$ and $1\le i,i'\le h$ be given, 
such that  $T_\ell$ is a tree part of $C_i$ and $T_{\ell'}$ a tree part of $C_{i'}$. 
If $i=i'$, then (3b) follows directly by induction. 
Otherwise, since the current step is a TS, $T_\ell$ and $T_{\ell'}$ 
can intersect in at most one vertex from $H_j$ which 
(if it exists) must belong to $B$, since $H_j\subset B$.

\textbf{IntR step.} 
Next, suppose that $C$ 
is formed in an IntR step.
Recall that, in this case,
$h=1$ and $H_j\setminus e_j\subset C_1$. 
We obtain 
$B$ from $B_1$ by  
adding the new red edge $e_j$
to $B_1$, and all (if any) tree parts
$T$ of $C_1$ are unchanged. 
Therefore, as in the previous case, 
properties (1) and (3b) hold by induction, 
and property
(2) holds vacuously. Hence, it remains only to verify property (3a). 
To this end, we will use the following claim to deduce that, 
for all tree parts $T$, 
the intersection $T\cap B$
is the same as $T\cap B_1$.  

\begin{claim} \label{C_KrDyn_on_union}
Fix $r\ge 5$. Suppose that $G,G'$ are graphs that satisfy 
$|V(G)\cap V(G')|\le 2$. 
Then, the vertex set of every copy $H$ of $K_r$ that is completed by the 
$K_r$-dynamics on $G\cup G'$ is either contained in 
$V(G)$ or in $V(G')$. 
\end{claim}

\begin{proof}
   Denote $A=V(G)\setminus V(G')$ and $B=V(G')\setminus V(G)$.
Towards a contradiction, let $H$ be the first copy of $K_r$, 
adding some edge $e$
to $G\cup G'$, that violates the claim. 
Observe that $H\setminus e$ has no edges between $A$ and $B$, 
since no such edge was previously added by the $K_r$-dynamics. 
By our assumption on $H$, 
both $|V(H)\cap A|$ and $|V(H)\cap B|$ are positive.
But then, since $r\ge 5$, 
one of these  sets has at least two vertices, 
and so  
$H$ has at least two edges between $A$ and $B$, 
which is a contradiction.
\end{proof}

Let $T$ be a tree part of $C$.
Note that, to show that 
$T\cap B=T\cap B_1$, we need to show that, when the new edge 
$e_j$ is added to $B_1$, the tree part $T$ does not acquire 
a new vertex in $B$. 
By Lemma \ref{lem:Kr_stable}, 
$T$ is $K_r$-stable, and so at least one vertex in 
$H_j$ is outside of $T$. By induction, 
by property (3), 
the graphs $T$ and $(C_1\setminus T)\cup B_1$ intersect in a single edge, 
which is the edge in $T\cap B_1$. 
Therefore, combining Claim \ref{C_KrDyn_on_union} and 
$V(H_j) \not\subset V(T)$, we deduce that  $V(H_j)$ is disjoint from 
$V(T)\setminus V(B_1)$. In particular, it follows that
$T\cap B=T\cap B_1$, as required.

\textbf{Costly step.} 
Finally, suppose that $C$ is formed in a costly step. 
Property (1) holds by induction, and (2) holds vacuously. 
Let $T$ be a tree part of some $C_i$ such that 
$(V(T)\setminus V(B_i))\cap V_j=\emptyset$. 
Recall that such a $T$ remains a tree part
of $C$ after the costly step. 
We claim that $V(T)\cap V(B)=V(T)\cap V(B_i)$. 
Indeed, suppose, towards a contradiction, that there exists a vertex 
$v$ in $V(T)\cap V(B)$ that is not in $V(B_i)$. Then, 
in particular, $v\notin V_j$. 
More specifically, 
this means that 
$v$ is not in $V(H_j)$ and does not belong to any other component 
$C_{i'}$ with $i'\ne i$. 
Therefore, $v$ must lie in a tree part $T'$ of $C_i$ that was added to $B$.
However, by induction, the intersection $T\cap T'$ is contained in $B_i$, 
which establishes the claim. Consequently, property (3a) follows by induction. 

For property (3b), let $T,T'$ be tree parts of $C$ that belonged to 
$C_i,C_{i'}$ respectively prior to the step. If $i=i'$, then 
property (3a) holds directly by induction. Otherwise, 
$V(T)\cap V(T')\subset B_i\cap B_{i'}$. Indeed, every vertex of $T$ not in 
$B_i$ is not in $V_j$, hence it does not appear in $C_{i'}$ (and similarly for $T'$). 
Moreover, by induction, $T$ intersects $B_i$ in an edge, 
and likewise $T'$ intersects $B_{i'}$ in an edge. 
These edges cannot be identical, since 
$T$ and $T'$ lie in different components. 
Therefore, they intersect in at most one vertex, 
which must belong to $B$, as required.
\end{proof}

\subsection{Complexity bounds}

Our goal is to use the tree decomposition to bound the complexity 
of a witness graph $W$ in terms of its cost. 
We start with the following claim.

\begin{lemma}\label{lem:few_bad_TS}
Let $C$ be a  component  existing after some step of the 
REA for a witness graph $W$. Let $\beta(C)$ denote 
the number of bad tree steps that occur during in the formation of $C$, 
and let $\kappa(C)$ be the cost of $C$. Then, $\beta(C)=O(\kappa(C))$. 
\end{lemma}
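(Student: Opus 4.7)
The plan is to track an evolving quantity $N(t)$, defined as the number of components existing after step $t$ of the REA (for $C$) whose bad part is non-empty. Initially $N(0)=0$, and $N(\text{end})\ge 0$, so it will suffice to show that each bad tree step decreases $N$ by at least one, while only costly steps can increase $N$, with each such increase being by at most one. This will give $\beta(C)$ bounded by the number of costly steps, which is in turn bounded by $\kappa(C)/r$ since each costly step has cost at least $r$.

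First I would run through the effect on $N$ of each of the step types distinguished in Section \ref{S_CTD}. A TS that starts a new component creates a TC with empty bad part, so $N$ is unchanged. A standard TS merges components in which at most one bad part is non-empty, and the new bad part is the (single) union of these, so $N$ stays the same. A bad TS merges components among which at least two have non-empty bad parts into one new component whose bad part is non-empty by construction, so $N$ decreases by at least one. A costly step produces a single merged component with non-empty bad part (since $H_j^\ast\subseteq B$), so $N$ changes by $1-a$, where $a\ge 0$ is the number of merged components that were bad; in particular $N$ grows by at most one.

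The one delicate case is the IntR step, where one must show $N$ is unchanged. This is precisely where Lemma \ref{lem:Kr_stable} enters: if the single component $C_1$ involved had empty bad part, it would consist of a single tree part and hence be a $K_r$-tree, which is $K_r$-stable, contradicting the fact that the IntR step introduces a new (red) edge to $C_1$ via the $K_r$-dynamics. So $B_1$ is already non-empty, and it remains so after appending $e_j$. This is also where the assumption $r\ge 5$ plays its role.

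Telescoping the changes in $N$ over the course of the REA that forms $C$ yields
\[
0\le N(\text{end})\le (\text{number of costly steps})-\beta(C),
\]
whence $\beta(C)\le \kappa(C)/r=O(\kappa(C))$, as required. The only real obstacle is getting the IntR case right; once $K_r$-stability is invoked, the rest is a routine bookkeeping argument over the five step types.
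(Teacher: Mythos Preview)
Your proof is correct and follows essentially the same approach as the paper: both track the number of components with non-empty bad part and observe that bad tree steps strictly decrease this count while only costly steps can increase it. Your treatment is in fact more detailed than the paper's, which simply asserts that $b_j\le b_{j-1}+1$ with equality only at costly steps; you spell out the IntR case explicitly (via Lemma~\ref{lem:Kr_stable}), which the paper leaves implicit.
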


\begin{proof}
Let us consider the $m'$ steps of the REA that are involved 
in the formation of $C$. Let $b_j$, for $1\le j\le m'$,  
denote that number of components (in the REA restricted to $C$) 
that have a non-empty bad part after the $j$th step. 
Clearly, if a bad tree step occurs at time $j$ then $b_j\le b_{j-1}-1$. 
On the other hand, $b_j \le b_{j-1}+1$ and 
equality can occur only if step $j$ is a costly step. 
Therefore, the number of bad tree steps is bounded 
by the number of costly steps,  which is $O(\kappa(C))$. 
\end{proof}

Next, to describe the complexity of 
components $C$, we make the following definitions. 

\begin{definition}[Maximal tree parts]
\label{D_maxTPs}
We say that a tree part is {\it maximal}
if it is inclusion maximal during in REA. 
We let $\omega=\omega(W)$ denote the 
total number of maximal tree parts. 
\end{definition}

Let us emphasize here that an axillary tree 
$T$ added to $B$ in a bad tree step is not itself 
a maximal tree part of $W$. 
Recall that such a $T$
is formed by merging $H_j^*$ with various tree parts $T_i$ 
(such that at least two of these belong to a component $C_i$ 
with a $B_i\neq\emptyset$). In this case, 
it is the various $T_i$ that are maximal tree parts, rather than $T$. 

\begin{definition}[Tree number]
The tree number $\tau(C)$
of a component $C$ is defined recursively. 
If $C$ is a TC then $\tau(C)=0$. Otherwise, 
if $C$ is formed by merging some $C_1,\ldots,C_h$
with $H_j^*$, we let $\tau(C)=\sum_{i=1}^h\tau(C_i)+\tau_j$, where 
 $\tau_j$ is the number of tree parts
in the various $C_i$  
that are added to the bad part $B$ of $C$ after the merge.
When such a tree part $T$ is
added to $B$, we say that 
it has become {\it compromised}.
\end{definition}

We note that there are two ways that 
a tree part can become compromised: 
(a) in a costly step it can interact with a vertex involved with the costly step 
(i.e., contain such a vertex outside of its bad part), or 
(b) in a bad tree step, it can be merged together with the copy of $K_r$
added in this step to form the axillary tree used in such a step.

The following lemma bounds the tree number of 
a component in terms of its cost.

\begin{lemma}[Complexity bound] 
\label{L_maxTC}
Let $C$ be a component that exists after some 
step of the REA for a witness graph $W$. Then,
\[
\tau(C)=O(\kappa(C)).
\]
In particular, $\tau=O(\kappa)$, 
where 
$\tau=\tau(W)$ is the tree number of $W$
and $\kappa=\kappa(W)$ is its total cost. 
\end{lemma}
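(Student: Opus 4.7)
The plan is to prove $\tau(C) = O(\kappa(C))$ by induction on the number of REA steps used to form $C$, reducing via the recursive definition of $\tau$ to a per-step bound on the increment $\tau_j$. First, $\tau_j = 0$ for standard tree steps and for IntR steps: a standard tree step merges tree parts into a larger tree part without compromising any, while an IntR step inherits the tree parts of $C_1$ unchanged. So only costly steps and bad tree steps contribute, and it suffices to bound their contributions.

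For a bad tree step, the auxiliary $K_r$-tree $T$ that gets absorbed into $B$ is formed from $H_j^*$ together with at most one tree part per merged component (any tree part contains at most one edge of $H_j^*$, by property~(3b) of Lemma~\ref{lem:tree_decomp_struct}). The number $h$ of merged components is at most the number $\binom{r}{2}$ of edges of $H_j^*$, since the components existing before step $j$ are pairwise edge disjoint. Thus $\tau_j \le h \le \binom{r}{2} = O(1)$.

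For a costly step, $\tau_j$ counts tree parts $T$ of the merged components $C_1,\ldots,C_h$ with $V(T)\setminus V(B_i)$ meeting $V_j$. The central observation is that property~(3b) of Lemma~\ref{lem:tree_decomp_struct} forces any two tree parts of a single $C_i$ to intersect only inside $V(B_i)$. Consequently, for each fixed $C_i$, any choice of witness $v(T) \in V_j \cap (V(T)\setminus V(B_i))$ for each compromised tree part $T \subset C_i$ defines an injective map from compromised tree parts of $C_i$ to $V_j$; hence the number of compromised tree parts in $C_i$ is at most $|V_j|$. Summing over the $h \le \binom{r}{2}$ merged components yields
\begin{equation*}
\tau_j \le h\cdot|V_j| = O(\kappa_j).
\end{equation*}

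Combining these per-step bounds with Lemma~\ref{lem:few_bad_TS}, which gives that the number $\beta(C)$ of bad tree steps is $O(\kappa(C))$, we obtain
\begin{equation*}
\tau(C) = \sum_{j\text{ costly}}\tau_j + \sum_{j\text{ bad TS}}\tau_j \le O(\kappa(C)) + O(1)\cdot\beta(C) = O(\kappa(C)),
\end{equation*}
with the second assertion $\tau(W)=O(\kappa(W))$ obtained by taking $C$ to be the final component of the REA. The main obstacle is the costly-step bound; once property~(3b) of Lemma~\ref{lem:tree_decomp_struct} is available, the rest is a routine double-count between the vertices of $V_j$ and the compromised tree parts, leveraging that only $\binom{r}{2} = O(1)$ components are merged per costly step.
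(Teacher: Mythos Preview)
Your proof is correct and follows essentially the same approach as the paper's: reduce to per-step bounds on $\tau_j$, handle bad tree steps by $\tau_j\le h=O(1)$ and costly steps by the injectivity argument from property~(3b), then absorb the bad-tree-step contribution via Lemma~\ref{lem:few_bad_TS}. One small imprecision: in the bad tree step case your parenthetical ``any tree part contains at most one edge of $H_j^*$, by property~(3b)'' is not quite the right statement---what you actually need (and what the paper uses) is that $H_j^*$ shares exactly one edge with each $C_i$ by the definition of a tree step, and the tree parts of $C_i$ are pairwise edge-disjoint by~(3b), so at most one tree part per $C_i$ meets $H_j^*$ in an edge.
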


\begin{proof}
The lemma follows directly if $C$ is a TWG, 
as then both sides are equal to $0$. 
Otherwise, we prove that $\tau(C)= O(\kappa(C)+\beta(C))$, 
which suffices by Lemma \ref{lem:few_bad_TS}.
 
Suppose that $C$ is formed by merging some $C_1,\ldots,C_h$
with $H_j^*$. Then, by the definition of the cost of $C$, 
we have that 
\[
\kappa(C)=\sum_{i=1}^{h}\kappa(C_i)+\kappa_j,
\]
where $\kappa_j=|V_j|$ is the cost of the step 
(where $\kappa_j>0$ if a costly step, and $\kappa_j=0$ otherwise).

To complete the proof, 
we show that if $\tau_j>0$,  then it is bounded by some 
constant factor multiplied by $\kappa_j$ plus the increment in $\beta(C)$. 
Observe that $\tau_j>0$ occurs only in two types of steps:

\textbf{Bad tree step.} In such a step, 
$\tau_j\le h =O(1)$ and the number 
$\beta(C)$ of bad tree steps increases by $1$.

\textbf{Costly steps.} 
In a costly step, 
for every $C_i$, each tree part of $C_i$ that 
becomes compromised has a vertex in $V_j$ that is not in the bad part 
$B_i$ of $C_i$. 
In addition, each such compromised tree part contributes 
a different vertex to $V_j$ by property (3b) of 
Lemma \ref{lem:tree_decomp_struct}. Therefore, 
the contribution of $C_i$ to $\tau_j$ is at most $|V_j|$. Since the 
number of $C_i$ satisfies $h =O(1)$, 
it follows that   $\tau_j=O(\kappa_j)$.
\end{proof}

Finally, 
we deduce the following results, 
which asserts that the combinatorial complexity of 
a general WG (as measured 
in terms of the number and total size of its maximal tree parts) 
is bounded in terms
of its cost $\kappa$.

\begin{corollary}
\label{C_omega}
If 
$W$ is a TWG then 
$\omega(W)=1$ and 
$\kappa(W)=0$. 
Otherwise, more generally, 
its number of maximal 
tree parts satisfies $\omega(W)
\le \tau(W)+1=O(\kappa)$. 
\end{corollary}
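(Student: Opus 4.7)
The proof splits along the dichotomy in the statement. For the TWG case, Corollary \ref{cor:TWG_only_minimizers} gives $\chi(W) = 0$, whence Lemma \ref{L_chikappa} forces $\kappa(W) = 0$. By property (2) of the tree decomposition in Section \ref{S_CTD}, $W$ consists of a single tree part with $B = \emptyset$; all earlier tree parts appearing during the REA are strict sub-TCs that get absorbed by later standard TSs and hence are not inclusion-maximal. Thus $\omega(W) = 1$.

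For the non-TWG case, the first observation is a book-keeping one: a tree part fails to be inclusion-maximal exactly when it is absorbed into a strictly larger tree part via a standard TS (a compromised tree part stays maximal, because every subsequent tree part meets $B$ in just one edge by property (3a) and therefore cannot contain the compromised tree part as a subgraph). Consequently $\omega(W) = \tau(W) + \mu(W)$, where $\mu(W)$ is the number of tree parts in the decomposition of the final $W$. In light of Lemma \ref{L_maxTC}, the corollary reduces to proving $\mu(W) \le 1$.

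To show $\mu(W) \le 1$, I would argue that for every tree part $T$ in the final decomposition, the red edge $e^{\ast}$ of the latest copy of $K_r$ added to $T$ during the REA must equal the final target edge $e_m$. Since $e_m$ is the unique target at the end of the REA and distinct tree parts are edge-disjoint by property (3b), this immediately forces $\mu(W) \le 1$. Suppose for contradiction $e^{\ast} \ne e_m$; then some later step $H_{k'}$ reuses $e^{\ast}$, and I would proceed by a case analysis on the type of step $k'$. If $k'$ is a TS, its unique shared edge with the current component is $e^{\ast} \in T$, so it grows $T$, contradicting that $T$'s growth had already ended. If $k'$ is costly, the base of $T$ was fixed at $T$'s creation and therefore cannot coincide with a red edge added later, so at least one endpoint of $e^{\ast}$ lies in $V(T) \setminus V(B)$; this vertex is thus in $V_{k'}$, and $T$ is compromised by step $k'$.

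I expect the IntR case to be the main obstacle. The plan there is to use Claim \ref{clm:Kr_tree_structure}(1) to rule out $V(H_{k'}) \subseteq V(T)$ (which would force $H_{k'}$ to be one of $T$'s own $K_r$ copies, contradicting novelty at step $k'$), while if $H_{k'}$ has a vertex outside $V(T)$, then the endpoint of $e^{\ast}$ in $V(T) \setminus V(B)$ is joined inside $H_{k'}$ to that outside vertex by an edge that can lie neither in any tree part (tree parts share vertices only in $V(B)$, by (3b)) nor in $B$ (whose edges have both endpoints in $V(B)$), contradicting the IntR condition $H_{k'} \setminus e_{k'} \subseteq E(W)$. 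Combining $\mu(W) \le 1$ with Lemma \ref{L_maxTC} then yields $\omega(W) \le \tau(W) + 1 = O(\kappa(W))$, as claimed.
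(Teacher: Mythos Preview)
Your overall strategy matches the paper's: both argue that at most one maximal tree part escapes being compromised, by doing a case analysis on the first step $k'$ that reuses a target edge of a surviving tree part $T$. Your bookkeeping identity $\omega=\tau+\mu$ is correct and equivalent to the paper's formulation. However, the case analysis has two genuine gaps.

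First, your tree-step case silently assumes the step is a \emph{standard} TS. If step $k'$ is a \emph{bad} TS, then $T$ does not grow; instead $T$ is merged into the auxiliary $K_r$-tree and added to $B$, i.e.\ $T$ is compromised. You need to split the TS case and handle bad TSs separately (the conclusion---$T$ compromised---is exactly what you want, but it is not ``$T$ grows'').

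Second, your IntR plan does not work as written. For part (a), Claim~\ref{clm:Kr_tree_structure}(1) concerns cliques whose \emph{edges} lie in $T$; knowing only $V(H_{k'})\subseteq V(T)$ is not enough, since the edges of $H_{k'}\setminus e_{k'}$ lie in $C_1$ and could a priori sit in $B$ or come from earlier IntR steps. You would first need to argue (using properties~(3a) and~(3b)) that any edge of $C_1$ with an endpoint in $V(T)\setminus V(B)$ must lie in $T$, and then invoke $K_r$-stability of $T$. For part (b), the edge $vw$ you produce may equal $e_{k'}$ itself, in which case there is no contradiction with $H_{k'}\setminus e_{k'}\subseteq C_1$ (note: the IntR condition is $H_{k'}\setminus e_{k'}\subseteq C_1$, not $\subseteq E(W)$). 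Patching this requires looking at the remaining $r-2\ge 3$ edges from $v$ and using $|V(T)\cap V(B)|=2$ to force a contradiction. The paper avoids all of this by simply citing the fact, established inside the proof of Lemma~\ref{lem:tree_decomp_struct}, that in every IntR step one has $V(H_j)\subseteq V(B)$; since a target edge of $T$ has an endpoint outside $V(B)$ (property~(3a)), step $k'$ cannot be IntR. This one-line appeal replaces your entire two-part IntR analysis.

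A minor point: in the costly case, rather than arguing that the base ``was fixed at $T$'s creation'' (which is delicate when $T$ begins life as a TC with empty bad part), just use property~(3a) directly: the base is never a target edge, so $e^*\ne$ base, hence $e^*$ has an endpoint in $V(T)\setminus V(B)\subseteq V_{k'}$.
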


\begin{proof}
If $W$ is not a TWG then $\tau(W)$ is the number of maximal tree parts
that ever become compromised (and added to a bad
part) during the REA. Clearly, the final target edge of 
$W$ can be a target edge of at most one maximal tree part. 

We claim that every other maximal tree part $T$ must 
be compromised during the REA.  
Towards a contradiction, suppose that some such
$T$ is never compromised.  
Suppose that after the $i$th step of the REA the tree part $T$ 
is fully formed. At this time, it has at least one target edge,
none of which are the final target edge of $W$.

Consider the $j$th step, for some $j>i$, 
that one of these target edges is reused. 
Such a time exists, 
since eventually all such edges will be reused. 
Note that, in the previous 
$(j-1)$th step,
$T$ is the tree part of some component, with a bad part $B$. 
By property (3a) in 
Lemma \ref{lem:tree_decomp_struct}, 
each target edge of $T$ has at least one vertex outside of $B$. 

To complete the proof, 
we take cases with respect to the step type of the $j$th step:

{\bf Standard tree step.} 
If the $j$th step is a standard tree step, then this would 
further expand $T$, and contradict its maximality. 

{\bf Bad tree step.}  
If the $j$th step is a bad tree step, then $T$
would be part of an auxiliary $K_r$-tree
that is added to some bad part, and so 
then $T$ would, in fact, become compromised. 

{\bf IntR step.}  
In the proof of Lemma \ref{lem:tree_decomp_struct},
we showed that, in an IntR step, all of the vertices in the copy of $K_r$ 
that is completed in such a step belong to the bad part of the component
that is being augmented. 
Therefore the $j$th step cannot be an IntR step
since, as already mentioned, 
each target edge of $T$ has at least one vertex
outside of $B$. 

{\bf Costly step.} 
If the $j$th step is a costly step then, since each target edge of $T$
has at least one vertex
outside of $B$, it follows that at least one of the vertices
in the target edge of $T$ that is reused in this step 
belongs to $V_j$, and so $T$ does, in 
fact, become compromised. 
\end{proof}

\begin{corollary}
\label{C_TP_sizes}
Let $W$ be a witness graph of order $\sigma$ and cost $\kappa$, 
and let $T_1,...,T_\omega$ be its maximal tree parts. Then,
\begin{equation}
\label{eq:sum_tp_size}
\sum_{s=1}^{\omega}|V(T_i)| \le \sigma + O(\kappa)\,.
\end{equation}
\end{corollary}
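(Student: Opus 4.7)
The plan is to bound $\sum_s |V(T_s)|$ by a direct double counting argument, tracking how many maximal tree parts each vertex of $W$ belongs to, and then applying the structural properties of the tree decomposition together with the complexity bound of Corollary \ref{C_omega}.

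First, I would set up the counting. Let $B$ denote the bad part of the tree decomposition of $W$, so that $V(W) = V(B) \cup \bigcup_{s=1}^\omega V(T_s)$. For each $v \in V(W)$, define $d(v) = |\{s : v \in V(T_s)\}|$, so that
\[
\sum_{s=1}^\omega |V(T_s)| = \sum_{v \in V(W)} d(v).
\]
I then handle the two cases of Lemma \ref{lem:tree_decomp_struct} separately. If $B = \emptyset$, property (2) gives $W = T_1$ and $\omega = 1$, so the left-hand side equals $|V(W)| = \sigma + 2$, and the bound is immediate (the additive constant $2$ is absorbed into $O(\kappa)$, as $W$ is then a TWG).

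The main case is $B \neq \emptyset$, where properties (3a) and (3b) apply. Property (3b) says that any two distinct maximal tree parts meet only in $V(B)$, so for every $v \in V(W) \setminus V(B)$ we have $d(v) \le 1$. Property (3a) says that $T_s \cap B$ is a single edge; hence $|V(T_s) \cap V(B)| = 2$ for every $s$, and summing gives
\[
\sum_{v \in V(B)} d(v) \;=\; \sum_{s=1}^\omega |V(T_s) \cap V(B)| \;=\; 2\omega.
\]
Combining these two bounds,
\[
\sum_{s=1}^\omega |V(T_s)| \;\le\; |V(W) \setminus V(B)| + 2\omega \;\le\; (\sigma + 2) + 2\omega.
\]

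Finally, I would invoke Corollary \ref{C_omega}, which gives $\omega = O(\kappa)$ (since $W$ is not a TWG in this case, so $\kappa \ge 1$ and the additive constant is absorbed). This yields the desired inequality $\sum_s |V(T_s)| \le \sigma + O(\kappa)$.

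The argument is essentially a straightforward accounting once the tree decomposition has been built; there is no real obstacle beyond correctly reading off the two structural properties and splitting off the (trivial) TWG case. The only minor subtlety is that ``intersection graph'' in property (3a) needs to be interpreted as forcing exactly two shared vertices per tree part, which is what lets the $V(B)$-contribution be pinned down as $2\omega$ rather than estimated more loosely.
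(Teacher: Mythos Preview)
Your argument has a genuine gap stemming from a misreading of what ``maximal tree parts'' are. By Definition~\ref{D_maxTPs}, a tree part is \emph{maximal} if it is inclusion-maximal \emph{over the entire course of the REA}; in particular (see the proof of Corollary~\ref{C_omega}), all but at most one of the $T_s$ eventually become \emph{compromised} and are absorbed into the bad part. They are \emph{not} the tree parts of the final decomposition of $W$. Consequently, when you let $B$ be the final bad part and invoke property~(3a) to conclude $|V(T_s)\cap V(B)|=2$, this fails for every compromised $T_s$: once $T_s$ is absorbed we have $T_s\subset B$, so $|V(T_s)\cap V(B)|=|V(T_s)|$, which may be large. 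Your identity $\sum_{v\in V(B)}d(v)=2\omega$ is therefore false.

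Likewise, property~(3b) only constrains the pairwise intersections of tree parts living in the \emph{same component at the same time}. Two maximal tree parts $T,T'$ may well have been tree parts of different components, and such components can share vertices (components are only edge-disjoint). The paper's proof handles exactly this: it first isolates at most two ``base vertices'' per $T_s$ (the vertices of $T_s$ that ever lie in the bad part while $T_s$ is still a tree part), contributing $O(\omega)=O(\kappa)$ in total, and then shows that any \emph{non-base} vertex shared by two maximal tree parts must lie in $V(H_j)$ for some bad tree step or in $V_j$ for some costly step, by tracking the step at which the two relevant components first merge. This merging analysis is the missing ingredient in your argument.
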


\begin{proof}
For $1\le s\le \omega$, we say that a vertex $v\in V(T_s)$ 
is a base vertex of $T_s$ if there exists a time in the REA in which $T_s$ 
is a tree part of some component and $v$ belongs to the bad part of this component. 
By Lemma \ref{lem:tree_decomp_struct} and the fact that the bad part of 
a component can only grow during the REA we find that every maximal tree part $T_s$ 
has either $0$ or $2$ base vertices. Therefore, by Corollary \ref{C_omega}, 
the base vertices contribute $O(\kappa)$ to the left side of \eqref{eq:sum_tp_size}. 

Next, suppose that the same vertex $v$ appears in two maximal tree parts 
$T\neq T'$, but is not a base vertex in either of them. 
We claim that there is a step in the REA in which distinct components 
$C\supset T$ and $C'\supset T'$ are merged, and one of the following holds: 
either this step is a bad tree step and $v\in V(H_j)$, 
or this step is a costly step and $v\in V_j$. 

Before proving the claim, we observe that it implies the corollary. 
Indeed, each bad tree step $j$ can contribute at most 
$|V(H_j)|=O(1)$ to $\sum_{s}|V(T_s)|-\sigma$, 
and by Lemma \ref{lem:few_bad_TS} there are $O(\kappa)$ such steps. 
In addition, each costly step $j$ can contribute at most 
$O(\kappa_j)$, since each vertex in $V_j$ can be in $O(1)$ 
components being merged in this step. 
Therefore, the total contribution of the costly steps is bounded by $O(\kappa)$.

We proceed with proving the claim. Let $t$ (resp.\ $t'$)  be the first step in the 
REA in which $v$ belongs to a tree part $ T_1$  
(resp.\ $T_1'$) that is contained in $T$ (resp.\ $T'$). 
Note that $t\neq t'$, since they are the times of standard tree steps, 
in which distinct tree parts
are created. 
Assume that $t<t'$, and let $C_1, C_1'$ 
denote the components containing $T_1,T'_1$, resp., at time $t'$. 
Note that $T_1'$ is a tree part of $C_1'$ 
that will grow into $T'$ in the future of the REA. 

We claim that $C_1\ne C_1'$. 
Suppose, towards a contradiction, that $C_1=C_1'$. 
Recall that $v$ is not a base vertex of $T'$, and thus it is also not a base vertex of $T_1'$. 
In particular, $v$ is not in the bad part of $C_1'=C_1$. 
That is, by time $t'$, $T_1$ has grown into a tree part $T_2$ of $C_1$, 
where $T_1\subseteq T_2\subseteq T$, that has not yet been compromised. 
In consequence, we have that $T_2$ and $T_1'$ 
are distinct tree parts of the same component, 
and they share the vertex $v$ outside of its bad part, 
contradicting property (3b) of Lemma \ref{lem:tree_decomp_struct}.

Therefore, there exists a step $j$ in the REA in which 
components $C,C'$ containing $C_1,C_1'$, resp., are merged. 
If this is a costly step, then $v\in V_j$ since it belongs to at least two of the merged components. 
Otherwise, this has to be a tree step.  In such a case, $C,C'$ are disjoint outside of $H_j$, 
whence $v\in V(H_j)$. We conclude the proof by showing this step cannot be a standard tree step. 
Indeed, if it were so, the step would merge subgraphs of $T,T'$ into the same tree part, 
in contradiction to $T,T'$ being two distinct maximal tree parts. 
\end{proof}

\subsection{Proof of the coarse lower bound}

We start this section by applying 
Corollaries \ref{C_omega} and \ref{C_TP_sizes} 
to obtain a bound on the number of witness graphs of a given size and cost.
by Corollary \ref{C_omega}, a general $W$
with total cost $\kappa>0$ has only 
$\omega=O(\kappa)$ many maximal tree parts. 
By Lemma \ref{L_chikappa}, 
such a $W$ has
an excess 
$\chi=\Omega(\kappa)$ number of edges.
Hence,  we want to show that 
the number (compared with TWGs of the same size) 
of such witness graphs is offset by 
the extra cost $p^\chi$ to be paid
for any such $W$.  

Recall that $\kappa=0$ if and only if 
$W$ is a TWG. 
By Lemma \ref{lem:count_TWG}, 
the number of 
(labelled) TWGs for a given 
edge $e$ of size $\sigma=(r-2)k$
is at most $O(\gamma^{\sigma}\sigma!)$. 
For general WGs, on the other hand, 
we will prove the following upper bound. 

\begin{lemma}
\label{L_ubWs}
The number of (labeled) witness graphs
$W$ for a given edge $e$ with 
size $\sigma$ and cost $\kappa>0$
is at most 
\[
A^\sigma\cdot \sigma!\cdot \sigma^{O(\kappa)}\,
\]
for some constant $A>0$. 
\end{lemma}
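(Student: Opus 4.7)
The plan is to exploit the tree decomposition of $W$ from Section \ref{S_CTD} to decompose $W$ into a small ``core'' plus a collection of $K_r$-trees, and then count each piece separately. Concretely, I will view $W$ as the union of its $\omega$ maximal tree parts $T_1,\ldots,T_\omega$ together with the ``extra glue'' edges of the final bad part $B$ that do not lie in any maximal tree part. The main $\sigma!$ factor will come from applying Lemma \ref{lem:count_TWG} to each tree part (since a $K_r$-tree with a distinguished base edge is precisely a TWG for that edge), while the remaining combinatorial data (the glue structure, the base assignments, and the partition of vertices among tree parts) will contribute only the sub-exponential factor $\sigma^{O(\kappa)}$.

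First, I would fix the ``shape'' of the decomposition: the number $\omega$, the sizes $\sigma_i=|V(T_i)|$, the combinatorial type (on abstract unlabeled vertices) of the glue part $B^*$, and the edge of $B^*$ that serves as the base of each $T_i$. By Corollary \ref{C_omega} we have $\omega=O(\kappa)$; by Corollary \ref{C_TP_sizes} the sizes satisfy $\sum_i\sigma_i\le\sigma+O(\kappa)$; and by tracking contributions to $B$ through costly, bad-tree, and IntR steps (using Lemma \ref{lem:few_bad_TS} and $\chi=\Omega(\kappa)$ from Lemma \ref{L_chikappa}), one sees that both $|V(B^*)\setminus V(e)|$ and $e(B^*)$ are $O(\kappa)$. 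Each of these specifications contributes $\sigma^{O(\kappa)}$ choices. Then, given the shape, I would label everything: choose which of the $\sigma+2$ given vertices fill the $O(\kappa)$ positions of $V(B^*)\setminus V(e)$ (cost $\binom{\sigma+2}{O(\kappa)}=\sigma^{O(\kappa)}$), and partition the remaining vertices among the non-base positions of the tree parts by a multinomial factor $\frac{N!}{\prod_i n_i!}$ with $N=\sum_i n_i\le\sigma+O(\kappa)$. Finally, for each $T_i$, Lemma \ref{lem:count_TWG} (together with \eqref{E_tkasy}) gives at most $O(\gamma^{n_i}n_i!)$ labeled $K_r$-tree structures on the prescribed vertex set and base. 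Multiplying the multinomial by these counts yields $(\sigma+O(\kappa))!\cdot\gamma^{\sigma+O(\kappa)}\cdot C^{\omega}$, which, after absorbing $C^{O(\kappa)}$ and $\gamma^{O(\kappa)}$ into a single constant and using $(\sigma+O(\kappa))!\le\sigma!\cdot\sigma^{O(\kappa)}$, gives the desired $A^\sigma\cdot\sigma!\cdot\sigma^{O(\kappa)}$.

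The main obstacle is the correct handling of compromised tree parts, which sit inside the final bad part $B$ but must be counted as tree parts rather than as arbitrary edges of $B$: a naive count of $B$ as a graph on its vertex set would give $2^{\Omega(e(B)^2)}$, which is far too large since compromised tree parts can contain a positive fraction of the $\sigma$ vertices of $W$. The key structural input is Lemma \ref{lem:tree_decomp_struct}(3), which guarantees that every maximal tree part meets the rest of $B$ only along a single base edge and meets other tree parts only at vertices lying in $B$. This lets one genuinely separate $W$ into the small glue $B^*$ (of size and complexity $O(\kappa)$) and the tree parts, so that the $K_r$-tree count of Lemma \ref{lem:count_TWG} can be applied to each $T_i$ independently, producing the single clean $\sigma!$ factor with only a polynomial overhead per unit of cost.
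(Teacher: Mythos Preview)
Your overall strategy---decompose $W$ into its maximal tree parts plus an $O(\kappa)$-sized ``glue'', count each piece, and label via a multinomial---is essentially the same as the paper's proof, which builds an unlabeled disjoint union $X$ of the tree parts together with one copy of $K_r$ for each bad tree step and costly step, and then chooses an onto map from $V(X)$ to $\{1,\ldots,\sigma+2\}$. The key structural inputs you cite (Corollaries \ref{C_omega} and \ref{C_TP_sizes}, Lemma \ref{lem:few_bad_TS}) are exactly the ones used in the paper.

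There is, however, a genuine gap in one step. You write that ``a $K_r$-tree with a distinguished base edge is precisely a TWG for that edge'' and invoke Lemma \ref{lem:count_TWG} to bound the number of labeled tree parts by $O(\gamma^{n_i}n_i!)$. This identification is false: tree parts are general $K_r$-trees (possibly with many target edges), not TWGs. Concretely, a $K_r$-tree may have an edge contained in three or more of its constituent cliques (take $H_1,H_2,H_3$ all containing a common edge $e$, pairwise disjoint elsewhere); no TWG has this structure, since in a TWG each hinge is shared by exactly two cliques. Already at order $2$ one checks that the number of labeled $K_r$-trees on $2(r-2)+2$ vertices containing a fixed edge strictly exceeds $t(2)$. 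So Lemma \ref{lem:count_TWG} does not give the count you need.

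For the coarse Lemma \ref{L_ubWs} this is easily repaired: since any constant $A$ suffices, replace the appeal to Lemma \ref{lem:count_TWG} by the trivial bound that there are at most $A_0^{x}$ unlabeled $K_r$-trees on $x$ vertices (immediate from their tree-like construction), which is precisely what the paper does. Your attempt to get the sharp constant $\gamma$ here is in fact the content of the refined Lemma \ref{L_ubWs2}, and that genuinely requires the extra input of Lemma \ref{L_targets_in_maxT} (bounding the number of target edges of each maximal tree part by $O(\kappa)$) together with Claim \ref{clm:num_TC_with_few_targes}; it cannot be obtained directly from Lemma \ref{lem:count_TWG}.
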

This bound is far from optimal, 
but suffices for our current purposes. 
Indeed, in our application (in the proof of Proposition \ref{P_pcLBcoarse}
below) 
the size of $W$ is $\sigma=O(\log n)$ and 
its cost  
$\kappa=O(\log\log n)$
is relatively small. As we will see, in this range, 
the above bound is already
more than enough   
to prove Proposition \ref{P_pcLBcoarse}.

\begin{proof}
Observe that every (black) edge in a 
witness graph belongs to one of its maximal tree components, 
or to $E(H_j)$ for some bad tree step or costly step $j$. 
Therefore, to construct the witness graph, we choose 
(i) the number $\omega$ and sizes $t_1,...,t_\omega$ of its maximal tree parts, 
(ii) unlabeled tree components $T_1,...,T_\omega$ of the chosen sizes, and 
(iii) the number and red/black edge-colorings of bad tree steps and costly steps.
     
Then, we  consider a vertex-disjoint union $X$ of these 
unlabeled tree components and a (colored, unlabeled) copy of 
$K_r$ for each bad tree step and costly step. We conclude the 
construction by (iv) choosing an onto function from the unlabeled 
vertices of $X$ to the labels $\{1,...,\sigma+2\}$ by which we 
identify the vertices from $X$ and create the witness graph.

We turn to bound the number of witness graphs that can be constructed. 
(i) By Corollaries \ref{C_omega} and \ref{C_TP_sizes}, we have that 
$\omega = O(\kappa)$ and $t_1+\cdots +t_\omega=\sigma + O(\kappa)$, 
hence there are $\sigma^{O(\kappa)}$ choices for the number and sizes of the TCs.
(ii) There are at most $A^x$ many TCs with $x$ unlabeled vertices, for some $A>0$. 
Indeed, this follows 
directly by the tree-like structure of TCs. Therefore, there are $A^{\sigma+O(\kappa)}$ 
choices for the tree components.
(iii) By Lemma \ref{lem:few_bad_TS} and the fact that the number of 
costly steps is bounded by $\kappa$, we have that there are $2^{O(\kappa)}$ 
choices for the number and  edge-colorings of the copies of $K_r$ 
corresponding to bad tree steps and costly steps. 
(iv) Finally, by the above considerations we also have that the total 
number of vertices in the disjoint union $X$ is $\sigma + O(\kappa)$. 
Hence, the number of ways to label the vertices of $X$ 
using all the labels from the set $\{1,...,\sigma+2\}$ is 
$\sigma !\cdot \sigma^{O(\kappa)}$,  
and this concludes the proof.
\end{proof}

We will use the following technical result
to deal with very costly WGs. 

\begin{lemma}[No very costly WGs] 
\label{L_costlyWG}
Fix some $e\in E(K_n)$. 
Let  $a,b>0$ and put 
$p=(a/n)^{1/\lambda}$. 
Then, for some $c>0$, with high probability, 
there is no witness graph for $e$ in $\cG_{n,p}$
of size $\sigma \le b\log n$ and excess $\chi\ge c\log \log n$.
\end{lemma}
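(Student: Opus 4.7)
The plan is to prove Lemma \ref{L_costlyWG} by a first moment calculation, combining the enumeration bound in Lemma \ref{L_ubWs} with the cost-vs-excess estimate $\kappa = O(\chi)$ from Lemma \ref{L_chikappa}. Fix $\sigma \le b\log n$ and $\chi \ge c\log\log n$, and let $N_{\sigma,\chi}$ denote the number of witness graphs for the fixed edge $e$ in $\cG_{n,p}$ of size $\sigma$ and excess $\chi$. Such a witness graph has $\lambda\sigma+1+\chi$ edges. By Lemma \ref{L_ubWs} combined with $\kappa=O(\chi)$, the number of labeled witness graphs for $e$ on a vertex set of size $\sigma+2$ is at most $A^\sigma\,\sigma!\,\sigma^{O(\chi)}$. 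Choosing the $\sigma$ vertices outside $e$ in $\binom{n-2}{\sigma}$ ways, and using $\binom{n-2}{\sigma}\sigma!\le n^\sigma$, I obtain
\[
\E[N_{\sigma,\chi}]
\le (An)^\sigma\,\sigma^{O(\chi)}\,p^{\lambda\sigma+1+\chi}
= p\cdot (Aa)^\sigma\cdot\bigl(\sigma^{O(1)}p\bigr)^\chi,
\]
where I used $np^\lambda=a$.

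Next, I sum over the relevant $\sigma$ and $\chi$. Since $\sigma \le b\log n$ and $p=O(n^{-1/\lambda})$, for $n$ sufficiently large we have $\sigma^{O(1)}p\le n^{-1/(2\lambda)}$. Hence the inner sum over $\chi \ge c\log\log n$ is geometric and bounded by
\[
2\bigl(\sigma^{O(1)}p\bigr)^{c\log\log n}
\le \exp\!\bigl(-\Omega(c\log\log n\cdot\log n)\bigr).
\]
Meanwhile the sum over $\sigma\le b\log n$ of $(Aa)^\sigma$ is at most $n^{O_{a,b}(1)}$, and $p\le 1$. Combining these bounds,
\[
\sum_{\sigma\le b\log n}\sum_{\chi\ge c\log\log n}\E[N_{\sigma,\chi}]
\le n^{O_{a,b}(1)}\cdot\exp\!\bigl(-\Omega(c\log\log n\cdot\log n)\bigr),
\]
which tends to $0$ once $c$ is chosen sufficiently large (in terms of $a,b$, and the implicit constants from Lemmas \ref{L_ubWs} and \ref{L_chikappa}). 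Markov's inequality then yields the result.

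The only real issue to watch is that the enumeration factor $\sigma^{O(\chi)}$ from Lemma \ref{L_ubWs} is polylogarithmic in $n$ raised to the power $\chi$, while the probability gain $p^\chi$ is polynomial in $n$ to the power $\chi/\lambda$; the latter wins provided the polynomial dominates the polylog, which is automatic for any fixed $\chi\ge1$, and gives an extra $\exp(-\Omega(\log n\cdot\log\log n))$ saving once $\chi\ge c\log\log n$. This is where the precise threshold $c\log\log n$ enters: it is just large enough so that iterating the polynomial saving $\chi$ times dominates the $n^{O(1)}$ polynomial count of vertex embeddings and cost-$\sigma$ excess choices, but no stronger lower bound on $\chi$ is required.
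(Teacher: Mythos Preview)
Your argument is correct, but it takes a heavier route than the paper's. The paper does not invoke Lemma~\ref{L_ubWs} (and hence none of the tree-decomposition machinery) here at all: it simply observes that any witness graph of size $\sigma$ and excess $\chi$ is in particular \emph{some} subgraph of $\cG_{n,p}$ on $\sigma+2$ vertices with at least $\lambda\sigma+1+c\log\log n$ edges, and bounds the expected number of such subgraphs crudely by
\[
\binom{n}{\sigma+2}\binom{\sigma^2/2}{\lambda\sigma+1+c\log\log n}p^{\lambda\sigma+1+c\log\log n},
\]
which is already $n^{-(1+o(1))(c/\lambda-\lambda b)\log\log n}$ and hence $o(1)$ for $c$ large. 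Your approach instead routes through the enumeration $A^\sigma\sigma!\,\sigma^{O(\kappa)}$ together with $\kappa=O(\chi)$; this is valid (and not circular, since Lemma~\ref{L_ubWs} does not depend on Lemma~\ref{L_costlyWG}), but it imports machinery that the statement does not need. The payoff of the paper's version is that Lemma~\ref{L_costlyWG} then stands independently of the tree decomposition, making its role in the proof of Proposition~\ref{P_pcLBcoarse} cleaner: it is the device that \emph{permits} one to restrict to $\kappa=O(\log\log n)$ \emph{before} applying Lemma~\ref{L_ubWs}, rather than a consequence of that lemma.
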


\begin{proof}
This follows by a simple union bound. 

In fact, we will show that, for some 
sufficiently large $c>0$, with high probability, 
there are no subgraphs of 
$\cG_{n,p}$ with $\sigma+2$ 
vertices and at least 
$\lambda \sigma+1+c\log \log n$
edges, within this range of $\sigma\le b\log n$. 
Indeed, the expected number of 
such subgraphs is
at most
\[
{n\choose \sigma+2}{\sigma^2/2\choose \sigma+1+c\log \log n}
p^{\lambda \sigma+1+c\log \log n}. 
\]
It can be seen 
(we omit the details) 
that this at most 
\[
n^{-(1+o(1))(c/\lambda-\lambda b)\log\log n},
\]
and the result follows, taking $c$ large and summing over
the $O(\log n)$ possibilities for $\sigma$. 
\end{proof}

Finally, we are ready to 
prove the coarse lower bound 
$p_c =\Omega(n^{-1/\lambda})$. 

\begin{proof}[Proof  of Proposition \ref{P_pcLBcoarse}]
Let 
$p=(\eps/n)^{1/\lambda}$, 
for some small $\eps>0$, to be determined below. 
Fix some edge $e\in E(K_n)$. We will show that 
$e\notin E(\langle\Gnp\rangle_{K_r})$ with high probability. 

First, we note that by Lemma \ref{L_lbTWG}, 
and taking $\eps>0$ sufficiently small, 
we may assume that if $e$ has a WG in 
$\Gnp$ then its cost $\kappa>0$. 
Indeed, the only WGs with $\kappa=0$ are 
TWGs by Corollary \ref{cor:TWG_only_minimizers}, 
and by Lemma \ref{L_lbTWG} there are 
with high probability no TWGs for $e$
in $\Gnp$, provided that 
$\eps<1/\gamma$.

By the Aizenman--Lebowitz property, Lemma \ref{L_AL}, 
it suffices to show that, for some $\beta>0$, with high probability 
there are no WGs 
\begin{itemize}
\item $W$ for $e$ with $\kappa(W)\ge1$
and $\sigma(W)/\log n\le \beta$; or 
\item $W'$, for any edge $f$, with 
$\sigma(W')/\log n\in [\beta,{r\choose2}\beta]$. 
\end{itemize}
Furthermore, note that, 
by Lemmas \ref{L_chikappa} and \ref{L_costlyWG}, 
we may assume that the cost $\kappa$ 
of any such potential $W$ or $W'$ is 
$O(\log\log n)$.

To rule out the first event, we apply 
Lemmas 
\ref{L_chikappa} and 
\ref{L_ubWs}. 
The expected number of such $W$
is at most 
\begin{align*}
\sum_{\sigma=0}^{\beta\log n}~
\sum_{\kappa=1}^{O(\log\log n)}
\binom n{\sigma} A^\sigma \sigma!\sigma^{O(\kappa)}p^{\lambda \sigma+1+\xi\kappa} \\ \le 
(\log n)^{O(\log\log n)}\cdot p\cdot 
\sum_{\sigma=0}^{\beta\log n} (\eps A)^\sigma\,,
\end{align*}
which tends to $0$ as $n\to\infty$ provided $\eps<1/A$.

For the second event, the expected number of such $W'$ is at most 
\begin{align*}
\binom n2\sum_{\sigma=\beta\log n}^{\beta\binom r2\log n}~
\sum_{\kappa=0}^{O(\log\log n)}
\binom n{\sigma} A^\sigma \sigma!\sigma^{O(\kappa)}p^{\lambda \sigma+1+\xi\kappa} \\ \le 
(\log n)^{O(\log\log n)}\cdot n^2p \cdot 
\sum_{\sigma\ge \beta\log n} (\eps A)^\sigma\,,
\end{align*}
Note that here we include $\kappa =0$ since $W'$ is a WG for any edge $f$, 
hence we also need to consider the possibility it is a TWG. In this case, 
we use the fact that the number of TWGs of size $\sigma$ is at most $A^\sigma\sigma!$. 
Clearly, for large enough $\beta>0$, 
this expectation tends to $0$ as $n\to\infty$, provided $\eps<1/A$.
\end{proof}

\section{Sharp lower bound}
\label{S_sharpLB}

Finally, we now turn to the lower bound 
in Theorem \ref{T_pc}.

\begin{proposition}[Sharp lower bound]
\label{P_pcLBsharp}
Fix $r\ge5$ and 
let $\eps>0$. Put  
\[
p=\left(\frac{1-\eps}{\gamma n}\right)^{1/\lambda}
\]
and fix $e\in E(K_n)$. 
Then, with high probability, 
$e\notin E(\langle\cG_{n,p}\rangle_{K_r})$. 
\end{proposition}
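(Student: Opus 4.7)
The plan is to sharpen the coarse counting argument of Section~\ref{S_LBuptoC} so that non-tree witness graphs are enumerated with the Fuss--Catalan constant $\gamma$ rather than the crude constant $A$ of Lemma~\ref{L_ubWs}. As in the coarse proof, I would fix $e$ and apply the Aizenman--Lebowitz property (Lemma~\ref{L_AL}) to reduce the statement to showing, with high probability, that there is no witness graph $W$ for $e$ with $\sigma(W)\le\beta\log n$ and $\kappa(W)\ge 1$, and no witness graph $W'$ for any edge $f$ with $\sigma(W')\in[\beta\log n,\binom{r}{2}\beta\log n]$, for a constant $\beta$ chosen large at the end. Lemma~\ref{L_costlyWG} restricts attention to cost $\kappa=O(\log\log n)$, and the case $\kappa=0$ reduces by Corollary~\ref{cor:TWG_only_minimizers} to TWGs, which are killed by the first-moment calculation of Lemma~\ref{L_lbTWG} together with a union bound over $f$ in the second A--L window.

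The core of the argument is the refined counting bound that the number of labelled witness graphs for a fixed edge $e$ of size $\sigma$ and cost $\kappa\ge 1$ is at most $\gamma^{\sigma}\,\sigma!\,\sigma^{O(\kappa)}$. Granting this, the expected number of such $W$ in $\cG_{n,p}$ is at most
\[
\binom{n}{\sigma}\,\gamma^\sigma\sigma!\,\sigma^{O(\kappa)}\,p^{\lambda\sigma+1+\xi\kappa}\;\le\; p\cdot(\gamma np^\lambda)^\sigma\cdot\bigl(\sigma\,p^{\xi}\bigr)^{O(\kappa)}\;\le\; p\cdot(1-\eps)^\sigma\cdot n^{-\Omega(\kappa)},
\]
where the $\xi\kappa$ lower bound on the excess is Lemma~\ref{L_chikappa}. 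Summing over $\sigma$ and $\kappa\ge 1$, and for the second A--L window multiplying by $\binom n2$ for the choice of $f$, both contributions are $o(1)$ provided $\beta$ is large enough.

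To prove the refined count, I would enumerate $W$ through its tree decomposition from Section~\ref{S_CTD}. By Corollary~\ref{C_omega} there are $\omega=O(\kappa)$ maximal tree parts $T_1,\dots,T_\omega$, each attached to the bad part $B$ along a single edge, and by Corollary~\ref{C_TP_sizes} their vertex sizes satisfy $\sum_i|V(T_i)|\le\sigma+O(\kappa)$. Labelling $B$ and specifying the hinge/attachment structure costs $\sigma^{O(\kappa)}$. If each $T_i$ were itself a TWG, then Lemma~\ref{lem:count_TWG} and \eqref{E_tkasy} would bound its labelled count by $\gamma^{|V(T_i)|}|V(T_i)|!$ up to polynomial corrections, and taking the product over $i$, together with $\prod_i|V(T_i)|!\le\sigma!$, would deliver the target $\gamma^\sigma\sigma!\,\sigma^{O(\kappa)}$ after absorbing a $\gamma^{O(\kappa)}$ slack.

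The main obstacle is precisely that a maximal tree part $T_i$ is in general not a TWG: vertices of $T_i$ that lie in adjacent costly steps may trigger cascades of IntR steps which distort its Fuss--Catalan enumeration. Controlling this cascade is the role of the $(r-2)_*$-bootstrap percolation process previewed in Section~\ref{S_introLB}. I would run this modified vertex dynamics on each $T_i$ seeded at the $O(\kappa_i)$ vertices of $T_i$ appearing in adjacent costly steps (with $\sum_i\kappa_i=O(\kappa)$), and observe that an IntR-driven cascade can cross a hinge of $T_i$ only once the corresponding hinge vertex becomes $(r-2)_*$-infected. Using the structural properties of $K_r$-trees (Claim~\ref{clm:Kr_tree_structure}) to adapt Riedl's analysis of $r$-neighbor bootstrap on trees, I would then show that so few seeds cannot drive the $(r-2)_*$-process beyond an $O(\kappa_i)$-neighborhood of the seeds, so that $T_i$ is $O(\kappa_i)$-close to a genuine TWG in the enumeration sense. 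This yields a per-tree-part bound of $\gamma^{|V(T_i)|}|V(T_i)|!\cdot|V(T_i)|^{O(\kappa_i)}$, which, once combined with the $\sigma^{O(\kappa)}$ overhead for $B$ and its hinges, delivers the refined count. The principal technical task is thus the analysis of the $(r-2)_*$-dynamics on $K_r$-trees with few seeds.
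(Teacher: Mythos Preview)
Your strategy matches the paper's: reduce via Lemmas~\ref{L_AL} and~\ref{L_costlyWG} to witness graphs of cost $\kappa=O(\log\log n)$, dispose of TWGs by Lemma~\ref{L_lbTWG}, and for $\kappa\ge1$ sharpen Lemma~\ref{L_ubWs} to a $\gamma^\sigma$-count by bounding the target edges in each maximal tree part via the $(r-2)_*$-comparison and a Riedl-type linear expansion bound on $K_r$-trees.

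Two points in your sketch are optimistic, though neither is fatal. First, the seeds for the $(r-2)_*$-process on a given maximal tree part $T$ must come from \emph{all} of the rest of $W$, not only from ``adjacent costly steps'': in the paper one takes $G$ to be the union of the other maximal tree parts together with the copies of $K_r$ from bad and costly steps, and bounds $|V(T)\cap V(G)|=O(\kappa)$ (Lemma~\ref{L_targets_in_maxT}, using Corollary~\ref{C_TP_sizes} for the tree-part overlaps). This bound is with the \emph{global} $\kappa$ for each $T$ separately; your localization $\sum_i\kappa_i=O(\kappa)$ is not justified, and so the enumeration actually yields $\gamma^\sigma\,\sigma!\,\sigma^{O(\kappa^2)}$ (Lemma~\ref{L_ubWs2}), from $\omega=O(\kappa)$ parts each contributing a factor $\sigma^{O(\kappa)}$, rather than your claimed $\sigma^{O(\kappa)}$. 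Second, ``$T_i$ is $O(\kappa_i)$-close to a TWG'' is made precise in the paper as: $T_i$ has $O(\kappa)$ target edges, and tree components with $x$ vertices and $\Delta$ targets number at most $\gamma^x x^{O(\Delta)}$ (Claim~\ref{clm:num_TC_with_few_targes}). Since $\kappa=O(\log\log n)$ and $\sigma=O(\log n)$, the weaker factor $\sigma^{O(\kappa^2)}=(\log n)^{O((\log\log n)^2)}=n^{o(1)}$ is still negligible, and your first-moment calculation goes through unchanged.
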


To prove this result, we build on the arguments of Section \ref{S_LBuptoC}. 
A closer inspection of those arguments reveals why they fall short 
of establishing a sharp lower bound: The main issue is that, in bounding 
the number of WGs of a given size and cost, we relied on the estimate 
$A^x$ for the number of TCs with $x$ unlabeled vertices (see Lemma \ref{L_ubWs}). 
However, $A>0$ is a constant {\it strictly} larger than 
$\gamma$ (the exponential growth rate of TWGs) since there are 
exponentially more TCs than TWGs. 

To refine this approach, we must control the 
TCs that actually arise in the tree decomposition of a WG of {\it small} cost, 
and show that they are {\it nearly} TWGs. In this section, we achieve this by 
a detailed analysis of the spread of IntR steps within compromised tree components 
(i.e., those that have interacted with costly steps). 
Ultimately, we will prove that the number of target edges 
(a natural proxy for complexity) that a tree part of a WG can have is bounded, 
up to a constant factor, by the total cost of the WG.

More specifically, in this section, we will analyze the spread of 
new edges that occurs after a tree part $T$ of a 
WG has interacted with other parts of the WG in costly (non-tree-like) ways. 
To be more concrete, suppose that $T$ is a $K_r$-tree
and $G$ is some other graph. We want to bound
the number of edges in 
$\langle G\cup T\rangle_{K_r}$
with both endpoints in $T$ 
in terms of the size of the vertex set $S=V(G)\cap V(T)$. 
We think of $T$ as a tree part and $S$ as the vertices of $T$ 
that are involved in costly steps in the REA. 
To bound this set of edges, we will make a comparison with a 
certain vertex bootstrap percolation process, which we call the $(r-2)_*$-BP process, 
defined in Section \ref{S_vBP} below. 
In this context, $S$ is the set of {\it seed vertices} that are initially infected. 
The {\it comparison lemma} (Lemma \ref{L_CompLem}) in 
Section \ref{S_CompLem} shows that this process provides a useful comparison: 
all endpoints in edges that are added to $T$ by the $K_r$-dynamics 
are eventually infected in $(r-2)_*$-BP. This initial comparison, 
however, is only one piece of the puzzle. In Section \ref{S_ExpLem}, 
we will prove an {\it expansion lemma} (Lemma \ref{S_ExpLem}) 
that limits the spread of
infection under the $(r-2)_*$-BP dynamics
in terms of the size of $S$. 
Finally, 
in Sections \ref{S_SprLem}--\ref{S_WGrev}, 
we combine the comparison and expansion lemmas to deduce a  
{\it spread lemma} (Lemma \ref{L_SprLem}), which is the key to 
proving a sharp lower bound on $p_c$. Using this result, 
we build upon the arguments
in Section \ref{S_LBuptoC}, 
obtaining very close control on the complexity of tree parts
in a witness graph with a given total cost.

\subsection{$(r-2)_*$-BP on TCs}
\label{S_vBP}

Recall that the graph formed  by the edges (both red and black) in a 
TC is a $K_r$-tree (see Definition \ref{D_Kr_tree}.) We have found it difficult to 
directly analyze the spread of IntR steps throughout a TC after it interacts  
with costly steps. 
Instead, we control this spread indirectly, 
using  the following modified version of the
$(r-2)$-neighbor bootstrap percolation  process on a $K_r$-tree. 

In what follows, it will be 
helpful to think of $T$ as
some tree part in a WG, 
and the set $S\subset V(T)$
as the locations where it has 
interacted with costly steps. 
As we will see, the following vertex  dynamics are chosen 
in such a way that if an edge is added to $T$ by the $K_r$-dynamics, 
then both of its endpoints are eventually infected by the vertex dynamics. 
In our applications of this process, the infected vertices will represent 
locations where we lose control over the spread of edges by the $K_r$-dynamics, 
and so we will usually assume (the worst case scenario) that they induce a clique.

\begin{definition}[$(r-2)_*$-BP]
Let $T$ be a $K_r$-tree
and $S\subset V(T)$ a set of {\it seeds}, 
which we consider to be initially {\it infected}. 
Then, in each step of the process, either 
\begin{itemize}
\item {\it (usual step)} some vertex $v$ with at least $r-2$ 
infected neighbors becomes infected; or else
\item {\it (special step)} if no such $v$ exists, but 
for some internal edge $e$, there are two cliques $H_i\neq H_j$ 
containing $e$ such that $H_i$ has $r-4$ infected vertices 
and $H_j$ has $1$ infected vertex,  
and all $r-3$ of these vertices are not in $e$, 
then  we infect an arbitrarily chosen vertex $u\in e$; and 
otherwise, 
\item {\it (terminal step)} if neither type of step is possible, 
then the process terminates. 
\end{itemize}
We call this process {\it $(r-2)_*$-BP} 
on $T$, 
and let $\langle S;T\rangle_*$ denote the set of 
eventually infected vertices in $T$ started with 
the seed set $S$. 
\end{definition}

\begin{remark}
The reason why only one vertex $u\in e$
becomes infected by the special rule is  
for technical reasons that will become 
apparent below. Note that, after such a special 
step, the other vertex in $e$ will become
infected by the usual rule. Hence 
$(r-2)_*$-BP prioritizes usual steps, 
and uses special steps only when necessary. 
\end{remark}

\begin{figure}[h]
\centering
\includegraphics[scale=1.00]{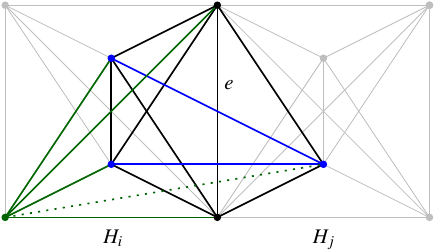}
\caption{Illustration of the special step
when $r=6$. Two copies $H_i,H_j$ of $K_6$
share an edge $e$. 
There are $r-3=3$ infected
vertices (in blue), with $r-4=2$ in $H_i$ and 1
in $H_j$. Assume that all other edges (in blue) between these vertices, 
that are not already in $T$, are eventually added by the $K_r$-dynamics. 
Then, these vertices together with those in $e$ form a clique of size $r-1=5$ 
(the relevant edges of $T$ are in black). The $K_r$-dynamics then add 
all other edges between $H_i$ and the infected vertex in $H_j$; 
the addition of one such edge (dotted) is indicated with green edges. 
Likewise, once one vertex in $e$ is infected by the special step, all other vertices
in $H_i$ (starting with the other vertex in $e$) get   infected by usual steps. 
In this way, the special step is taken so as to maintain a useful 
comparison with the $K_r$-dynamics. 
}
\label{F_star_step}
\end{figure}

As discussed in Section \ref{S_*proc},
the special step above is related to the presence of 
``bottlenecks'' as new edges
attempt to spread throughout a
tree part $T$ of a WG after it has interacted with costly steps.  
More specifically, in the special step, if the $r-3$
infected vertices in $H_i\cup H_j$ form a clique (as discussed, 
in bounding the spread of edges, we will usually assume 
that the infected vertices induce a clique, since we have little 
control over the spread of edges between such vertices) then these vertices, 
together with the endpoints of the shared edge $e$, form a clique of size $r-1$. 
If this is the case, then the $K_r$-dynamics will add further (in fact, all) edges 
between $H_i$ and the infected vertex in $H_j$. This is why we infect a 
vertex of $e$ in the special step: since edges will begin to ``flow'' from 
one side of $e$ to the other (i.e., between $H_i$
and the infected vertex in $H_j$) we must (in order to maintain a useful comparison) 
allow the vertex dynamics to spread ``through'' this edge. 
Once one of the vertices in $e$
is infected in the special step, all other vertices in $H_i$ (including the other vertex in $e$) 
will be infected by usual steps, in line with the fact that the $K_r$-dynamics will add 
all edges between $H_i$ and the infected vertex in $H_j$;  
see Figure \ref{F_star_step}. 
Furthermore, note that the configuration in the special step is {\it minimal}, 
in the sense that if there are fewer than $r-4$ infected vertices in $H_i$ 
or none in $H_j$, then no more edges can be added between 
$H_i$ and $H_j$ by the $K_r$-dynamics, in which case $e$ remains 
a ``bottleneck'' between $H_i$ and $H_j$.


\subsection{Comparison lemma}
\label{S_CompLem}

In order to study the interaction of costly 
steps with TCs, 
we will consider the following situation:  
Suppose we take the union of some graph
$G$ with a $K_r$-tree $T$ and run the $K_r$-dynamics. 
Clearly, we cannot control the effect of the $K_r$-dynamics 
inside the graph $G$, since we did not assume anything on its structure. 
However, we will show that an edge $e$ whose endpoints are in 
$V(T)$ can be infected only if both its endpoints are in the set of 
eventually infected vertices $\langle S;T\rangle_*$ in the $(r-2)_*$-BP
process on $T$,  if we take the set $S=V(G)\cap V(T)$ to be the seeds. 

In our application of the following lemma, $G$ will correspond to the 
graph that is comprised of the costly steps during the REA, 
which interact with a TC $T$. 
Roughly speaking, we will use it to show that IntR steps
can only spread as far as $(r-2)_*$-BP.

\begin{lemma}[Comparison lemma]
\label{L_CompLem}
Let $r\ge 5$, $T$ be a $K_r$-tree, $G$ a graph and $S=V(G)\cap V(T)$.
Then,
\[
\langle G\cup T\rangle_{K_r} \subseteq Q\cup T\,,
\]
where $Q$ is a clique on the vertex set $V(G)\cup \langle S;T\rangle_*$.
\end{lemma}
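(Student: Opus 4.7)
The plan is to prove the inclusion by induction on the number of edges added to $G\cup T$ by the $K_r$-dynamics, with the invariant that every edge present at the current stage lies in $E(Q)\cup E(T)$. The base case is immediate: $E(G\cup T)=E(G)\cup E(T)$, and $E(G)\subset E(Q)$ since $V(G)\subset V(Q)$. For the inductive step, suppose a copy $H$ of $K_r$ is completed by the $K_r$-dynamics, adding the edge $e$, with its other $\binom{r}{2}-1$ edges already in $E(Q)\cup E(T)$; the task is to show $e\in E(Q)\cup E(T)$.

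The key observation driving the analysis is that, for any $v\in V(H)\setminus V(Q)$, every edge of $H$ at $v$ (other than possibly $e$) must lie in $E(T)$, since a $Q$-edge requires both endpoints in $V(Q)$. In particular, $v\in V(T)$, and every $w\in V(H)\setminus\{v\}$, except at most the other endpoint of $e$, is a $T$-neighbor of $v$. Setting $U:=V(H)\cap(V(T)\setminus\langle S;T\rangle_*)$, if $U=\emptyset$ then $V(H)\subset V(Q)$ and $e\in E(Q)$. If $U\ne\emptyset$ but $V(H)\not\subset V(T)$, a short argument using the key observation and the uniqueness of $e$ forces $|U|=1$, the unique $y\in V(H)\setminus V(T)$ to satisfy $e=uy$, and the remaining $r-2$ vertices of $V(H)\setminus\{u,y\}$ to lie in $\langle S;T\rangle_*$; the usual rule of $(r-2)_*$-BP then infects $u$, a contradiction.

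It remains to treat $V(H)\subset V(T)$ with $U\ne\emptyset$, where we aim to show $e\in E(T)$. If $|U|=1$, the usual rule again infects the unique $u\in U$, contradiction. If both endpoints of $e$ lie in $U$, the key observation yields $r-2\ge 3$ common $T$-neighbors of these endpoints (namely the vertices of $V(H)\setminus e$), so Claim~\ref{clm:Kr_tree_structure}(2) gives $e\in E(T)$. If neither endpoint of $e$ lies in $U$, both are infected and $e\in E(Q)$.

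The main obstacle is the remaining subcase: $|U|\ge 2$ with exactly one endpoint $u$ of $e$ in $U$. The plan here is, for any $u'\in U\setminus\{u\}$, to use the key observation to conclude that every vertex of $V(H)\setminus\{u'\}$ is a $T$-neighbor of $u'$; either these $r-1$ neighbors all lie in a single $K_r$-clique $H_i$ of $T$ containing $u'$ (in which case $V(H)=V(H_i)$ and $e\in E(T)$), or else they straddle multiple $K_r$-cliques of $T$ through $u'$. In the latter case, using Claim~\ref{clm:Kr_tree_structure} together with the $K_r$-tree structure of $T$, the plan is to locate an internal edge $f$ of $T$ and distinct cliques $H_i,H_j\ni f$ such that $V(H_i)\setminus f$ contains $r-4$ infected vertices and $V(H_j)\setminus f$ contains $1$ infected vertex; the special rule of $(r-2)_*$-BP then fires at $f$, and iterating this argument eventually forces $u\in\langle S;T\rangle_*$, the desired contradiction. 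Verifying that the straddling geometry of $V(H)$ across adjacent $K_r$-cliques of $T$ always matches the calibration $r-4$ and $1$ built into the special rule is the technical crux, and is exactly where the hypothesis $r\ge 5$ enters.
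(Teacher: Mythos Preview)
Your approach matches the paper's: induction on the dynamics with the invariant that every current edge lies in $E(Q)\cup E(T)$, driven by the observation that any vertex of $V(H)\setminus V(Q)$ has all its $H$-edges (except possibly $e$) in $T$. Your handling of the cases $V(H)\not\subset V(T)$, $|U|\le 1$, and both/neither endpoint of $e$ in $U$ is correct and parallels the paper's organization.

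The gap is that the remaining case is only sketched, and as written the plan does not quite close. Two concrete points. First, do not try to treat all of $|U|\ge 2$ uniformly via the special rule: if $|U|=2$ the unique $u'\in U\setminus\{u\}$ already has $r-2$ infected $T$-neighbors in $V(H)$, so the \emph{usual} rule fires; if $|U|\ge 4$ the vertices of $U\setminus\{u\}$ give $\ge 3$ common $T$-neighbors of both endpoints of $e$, so Claim~\ref{clm:Kr_tree_structure}(2) yields $e\in E(T)$ directly. Only $|U|=3$ needs the special rule. Second, in that case you must actually identify the internal edge and the two cliques, and this is not an ``eventually, by iteration'' matter. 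Writing $U=\{u,x_1,x_2\}$ and $v$ for the other (infected) endpoint of $e$, the internal edge is $x_1x_2$: use Claim~\ref{clm:Kr_tree_structure}(2) with common neighbors $u,x_1,x_2$ to get $x_ix_j\in E(T)$ for all $3\le i<j\le r-2$, so $\{u,x_1,\dots,x_{r-2}\}$ is a clique in $T$ and hence, by Claim~\ref{clm:Kr_tree_structure}(1), lies in some $H_i$; similarly $\{v,x_1,x_2\}$ is a clique in $T$, lying in some $H_j$. If $H_i=H_j$ then $e\in E(T)$ and we are done; otherwise $H_i\cap H_j=x_1x_2$, with $r-4$ infected vertices $x_3,\dots,x_{r-2}$ on the $H_i$ side and at least one ($v$) on the $H_j$ side, and both $x_1,x_2$ uninfected. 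Since $\langle S;T\rangle_*$ is already closed under both the usual and the special rule, exhibiting this configuration is an \emph{immediate} contradiction; there is no iteration.
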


We note that it is easy to see that  \(
\langle G\cup T\rangle_{K_r} = Q\cup T\,
\)  if the graph $G$ is $K_r$-percolating.

\begin{proof}
We prove the lemma using Claim \ref{clm:Kr_tree_structure} 
on the structure of $K_r$-trees.
Denote $G_*=\langle G\cup T\rangle_{K_r}$ and $I_*=\langle S;T\rangle_*$. 
Suppose, 
towards a contradiction, that 
$\tilde e=\tilde u\tilde v\in E(G_*) \setminus E(Q\cup T)$ is the first 
such additional edge,  
with at least one vertex $\tilde v\in V(T)\setminus I_*$, that 
is added to $G_*$ by the $K_r$-dynamics. 
Let $\tilde H$ be the copy of $K_r$ that $\tilde e$ completes, 
such that $\tilde H\setminus \tilde e\subset G_*$. Denote 
$V(\tilde H)=\{\tilde u,\tilde v,x_1,...,x_{r-2}\}$. 
Note that since $\tilde e$ is the first additional edge added to $G_*$ 
with an endpoint in 
$V(T)\setminus I_*$, and in addition, the vertex 
$\tilde v\notin I_*$, then the edges in $\tilde H$ containing $\tilde v$ are all in $T$. 
In particular, $V(\tilde H)\subset V(T)$, since $\tilde H$ is a clique.   

First, note that if also $\tilde u\notin I_*$ 
then the $2(r-2)$ edges between $x_1,...,x_{r-2}$ and 
$\tilde u,\tilde v$ belong to $T$. Therefore, $\tilde u$ and $\tilde v$ have 
$r-2\ge 3$ common neighbors in $T$, 
and so, by Claim \ref{clm:Kr_tree_structure}(2), 
they are neighbors in $T$, in contradiction to $\tilde e\notin T$.

Hence, we assume that $\tilde u \in V_*$. Suppose that, 
up to relabeling the $x_i$ if necessary, that  
\[
\{x_1,...,x_{r-2}\}\setminus I_*=\{x_1,...,x_k\}, 
\] 
for some $0\le k\le r-2$. 
We note that all of the $r-2$ edges between $\tilde v$ and $x_1,...,x_{r-2}$
and all of the $k(r-1-k)$ edges between $x_1,...,x_k$ and $x_{k+1},...,x_{r-2},\tilde u$ 
are edges of $T$, since 
each such edge has 
at least one endpoint in $V(T)\setminus I_*$.

We consider the following cases:

{\bf Case 1.} If $k\ge 3$ then $\tilde u$ and $\tilde v$ 
have at least three common neighbors in $T$, 
and by Claim \ref{clm:Kr_tree_structure}(2) they are neighbors in $T$, 
in contradiction to $\tilde e\notin T$.

{\bf Case 2.} If $k = 0$ then $\tilde v$ has $r-2$ neighbors in 
$T$ that belong to $I_*$. This means that $\tilde v$ 
should have been infected in the $(r-2)_*$-BP process, 
and it contradicts the assumption that $\tilde v\notin I_*$. 

{\bf Case 3.} If $k = 1$ then $x_1$ has $r-2$ neighbors in 
$T$ that belong to $I_*$: $x_2,...,x_{r-2}$ and $\tilde u$. 
Similarly to Case 2, this contradicts 
$x_1\notin I_*$.

{\bf Case 4.} Finally, suppose that $k = 2$. 

First, we claim that $\tilde v,x_1,...,x_{r-2}$ induce a clique in $T$. 
Note that $\tilde v,x_1,x_2$ form a clique in $T$, and that 
each of these vertices are neighbors with all of 
$x_3,...,x_{r-2}$. Moreover, 
by Claim \ref{clm:Kr_tree_structure}(2), 
we see that $x_ix_j\in E(T)$, for all $3\le i<j\le r-2$, since 
each such 
$x_i,x_j$ have the three common neighbors 
$\tilde v,x_1,x_2$
in $T$. Therefore, 
by Claim \ref{clm:Kr_tree_structure}(1), 
we find that 
$\tilde v,x_1,...,x_{r-2}$ belong to some copy $H_i$ of $K_r$ in $T$.

Second, observe that $\tilde u,x_1,x_2$ also form a clique in $T$. 
Hence, by Claim \ref{clm:Kr_tree_structure}(1),  
they belong to some copy $H_j$ of $K_r$ in $T$. 
Note that $H_i\ne H_j$, since $\tilde e\notin T$, and 
that 
$H_i,H_j$
intersect in the edge $x_1x_2$. 

Finally, observe that $x_3,...,x_{r-2}\in I_*$ are $r-4$ infected vertices in $H_i$, 
and $\tilde u\in I_*$ is $1$ infected vertex in $H_j$. 
Therefore, either $x_1$ or $x_2$ would have been infected in $(r-2)_*$-BP 
in a special step (or in a usual step if one of them has more neighbors in $I_*$), 
in contradiction to $x_1,x_2\notin I_*$.
\end{proof}

Lemma \ref{L_CompLem} asserts that all the edges that are added to 
$G\cup T$ by the $K_r$-dynamics are in 
the clique $Q$ of the vertices in $V(G)\cup\langle S;T\rangle_*$. 
However, in our application of this result, we actually need a slightly stronger statement,  
namely, that all the copies $H$ of $K_r$ that are completed during this run of the 
$K_r$-dynamics are fully contained (not only the edges 
that these $H$ add to $G\cup T$) in $Q$. 

\begin{corollary}\label{cor:comp_cor}
Let $r\ge 5$, $T$ be a $K_r$-tree, $G$ a graph and $S=V(G)\cap V(T)$. 
Then, every copy $\tilde H$ of $K_r$ that is completed by the 
$K_r$-dynamics started from $G\cup T$ satisfies
\(
V(\tilde H) \subset V(G)\cup\langle S;T \rangle_*\,.
\)
\end{corollary}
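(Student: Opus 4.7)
The plan is to strengthen Lemma \ref{L_CompLem} from a statement about edges to one about entire copies of $K_r$ by running essentially the same argument, but placing the ``bad'' vertex $v \notin V(G) \cup \langle S;T\rangle_*$ as an internal vertex of the completed copy $\tilde H$ rather than as an endpoint of the newly added edge. So I will argue by contradiction: suppose some $\tilde H$ completes an edge $\tilde e = \tilde u \tilde v$ during the $K_r$-dynamics on $G\cup T$, and suppose there exists $v \in V(\tilde H) \setminus (V(G) \cup \langle S;T\rangle_*)$.

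First I would extract from Lemma \ref{L_CompLem} that every edge of $\tilde H$ lies in $E(Q) \cup E(T)$, where $Q$ is the clique on $V(G) \cup \langle S;T\rangle_*$. Since $\tilde e$ is added by the dynamics it is not in $T$, so $\tilde e \in E(Q)$ and hence $\tilde u, \tilde v \in V(Q)$; in particular $v$ is not an endpoint of $\tilde e$. Because $v \notin V(Q)$, no edge of $\tilde H$ incident to $v$ can lie in $E(Q)$, so all $r-1$ such edges must lie in $T$. This forces $v \in V(T)$, places all $r-1$ other vertices of $\tilde H$ as $T$-neighbors of $v$, and (using $V(T) \cap V(Q) = \langle S;T\rangle_*$, since $S \subset \langle S;T\rangle_*$) gives $\tilde u, \tilde v \in \langle S;T\rangle_*$. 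Labelling the remaining $r-3$ vertices of $\tilde H$ by $x_1, \ldots, x_{r-3}$, let $j$ be the number of them not in $\langle S;T\rangle_*$; say $x_1, \ldots, x_j \notin \langle S;T\rangle_*$. For each $\ell \le j$, the edges $x_\ell \tilde u$ and $x_\ell \tilde v$ again have an endpoint outside $V(Q)$, and so lie in $T$.

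The argument would then split on $j$. If $j \le 1$, then $v$ has at least $r-2$ $T$-neighbors in $\langle S;T\rangle_*$, namely $\tilde u$, $\tilde v$ and the at least $r-4$ vertices among $x_{j+1}, \ldots, x_{r-3}$, so a usual step of $(r-2)_*$-BP would infect $v$, contradicting $v \notin \langle S;T\rangle_*$. If instead $j \ge 2$, then $v, x_1, \ldots, x_j$ give at least three distinct common $T$-neighbors of $\tilde u$ and $\tilde v$, so Claim \ref{clm:Kr_tree_structure}(2) forces $\tilde u \tilde v \in E(T)$, contradicting $\tilde e \notin E(T)$. Either way we contradict the existence of $v$, whence $V(\tilde H) \subset V(G) \cup \langle S;T\rangle_*$.

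The main obstacle is not conceptual, since with Lemma \ref{L_CompLem} and Claim \ref{clm:Kr_tree_structure} in hand this is really a short refinement, but rather the careful bookkeeping of which vertices of $\tilde H$ lie in each of $V(G)$, $V(T)$, and $\langle S;T\rangle_*$, and the verification that the split $j\le 1$ versus $j\ge 2$ matches exactly the threshold at which either the usual rule of $(r-2)_*$-BP or the three-common-neighbour condition of Claim \ref{clm:Kr_tree_structure}(2) fires. It is worth noting that the special step of $(r-2)_*$-BP does not need to be invoked here: positioning $v$ strictly inside $\tilde H$ (rather than on $\tilde e$, as in Lemma \ref{L_CompLem}) supplies the one extra common $T$-neighbour of $\tilde u, \tilde v$ required for the tree-structure argument to close the gap without appealing to the reduced threshold~$r-3$.
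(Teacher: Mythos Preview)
Your proof is correct and essentially identical to the paper's: both assume a bad vertex $v\in V(\tilde H)\setminus(V(G)\cup I_*)$, use Lemma~\ref{L_CompLem} to force all edges at $v$ into $T$ (hence $V(\tilde H)\subset V(T)$ and the endpoints of $\tilde e$ into $I_*$), and then split into the two cases you describe, invoking a usual $(r-2)_*$-BP step or Claim~\ref{clm:Kr_tree_structure}(2) respectively. The only cosmetic difference is that the paper parametrizes the case split by $|V(\tilde H)\setminus I_*|\in\{1,2\}$ versus $\ge 3$, which is your $j\le 1$ versus $j\ge 2$ shifted by one.
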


In other words, each vertex
of such a $\tilde H$
is either in $G$ or 
infected by the $(r-2)_*$-BP 
process on $T$
started from $S$. 

\begin{proof}
Denote $G_*=\langle G\cup T\rangle_{K_r}$ and $I_*=\langle S;T\rangle_*$. 
Suppose, in contradiction, that $\tilde H$ is a copy of $K_r$ that is 
completed in the run of the $K_r$-dynamics on $G\cup T$ and 
$V(\tilde H)\cap V(T)\setminus I_*\neq\emptyset$. 
Let $\tilde v$ be a vertex in $V(\tilde H)\cap V(T)\setminus I_*$ and 
$\tilde e \in G_*\setminus (G\cup T)$ 
the edge that is added to complete $\tilde H$. 

By Lemma \ref{L_CompLem},  the endpoints of every edge in 
$G_*$ that is not in $T$ are in $V(G)\cup I_*$. 
Therefore, all the edges of $\tilde H$ that contain $\tilde v$ belong to $T$, 
and, in particular, $V(\tilde H)\subset V(T)$. 
Consequently, the endpoints of $\tilde e$ are in $I_*$, 
whence $\tilde v\notin\tilde e$. 

We consider two cases: 

First, if $1\le |V(\tilde H)\setminus I_*|\le 2$ then 
$\tilde v$ has at least $r-2$ neighbors in $T$ from $I_*$.  
This means that it should have been infected in the $(r-2)_*$-BP process, 
which would contradict the assumption that $\tilde v \notin I_*$. 

Otherwise, suppose $|V(\tilde H)\setminus I_*|\ge 3$. 
The edges between $V(\tilde H)\setminus I_*$ 
and the endpoints of $\tilde e$ belong to $T$. 
Therefore, the endpoints of $\tilde e$ have 
at least $3$ common neighbors in $T$. 
But then  $\tilde e\in T$, by Claim \ref{clm:Kr_tree_structure}(2), 
in contradiction to the assumption that $\tilde e$ 
is added by the $K_r$-dynamics on $T\cup G$. 
\end{proof}

\subsection{Expansion lemma}
\label{S_ExpLem}

As discussed above, 
we plan to use the comparison lemma 
(Lemma \ref{L_CompLem})
to bound the spread of IntR steps
after a tree part of a witness graph has
interacted with costly steps. 
In doing so, we will need to  
gain some control on 
the size of $\langle S;T\rangle_*$, 
as a function of $|S|$.

\begin{lemma}[Expansion lemma]
\label{L_ExpLem}
Let $T$ be a $K_r$-tree for some $r\ge 5$ and $S\subset V(T)$. 
Then 
\[
|\langle S;G\rangle_*|
=O(|S|). 
\]
\end{lemma}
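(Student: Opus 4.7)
The plan is to prove the lemma by induction on the order $\vartheta$ of the $K_r$-tree $T$, establishing the stronger quantitative statement that $|\langle S;T\rangle_*|\le C_r|S|$ for some constant $C_r=C_r(r)$ and every nonempty seed set $S$. The base case $\vartheta=1$ is immediate, since $|V(T)|=r$. For the inductive step I would choose a leaf clique $H$ of the clique tree, with hinge edge $h=uv$ and private vertex set $W=V(H)\setminus\{u,v\}$ of size $r-2$, and let $T'=T\setminus W$ be the remaining $K_r$-tree of order $\vartheta-1$.

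The essential structural input, which crucially uses $r\ge 5$, is that every $w\in W$ has its entire neighborhood inside $V(H)$ and is not an endpoint of any internal edge of $T$ (because $H$ is a leaf). Hence the special rule never applies to $w$, and the usual rule requires $r-2$ infected neighbors, of which at most two come from $\{u,v\}$; so at least $r-4\ge 1$ must already be infected in $W\setminus\{w\}$. In particular, if $S\cap W=\emptyset$ then no vertex of $W$ is ever infected, and a check in the spirit of the proof of the comparison lemma shows that neither $u$ nor $v$ receives any additional help from $W$, so the $T$- and $T'$-dynamics coincide and the inductive hypothesis closes this case.

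The main case is $S\cap W\ne\emptyset$. By monotonicity of $(r-2)_*$-BP, the projection of $\langle S;T\rangle_*$ onto $V(T')$ is dominated by $\langle(S\cap V(T'))\cup\{u,v\};T'\rangle_*$, which by induction has size at most $C_r(|S\cap V(T')|+2)$; combined with $|\langle S;T\rangle_*\cap W|\le r-2$, this gives the crude bound $|\langle S;T\rangle_*|\le C_r|S\cap V(T')|+2C_r+r-2$. The main obstacle is that the additive surplus $2C_r+r-2$ is not directly absorbable into $C_r|S\cap W|$ when $|S\cap W|\in\{1,2\}$, so the naive induction does not close.

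To overcome this, the refinement is to show that the two hinge vertices cannot both be \emph{genuinely new} --- that is, lie in $\langle S;T\rangle_*\setminus\langle S\cap V(T');T'\rangle_*$ --- unless $W$ already contributes infections linear in $|S\cap W|$. Concretely, each hinge vertex that $W$ unlocks must be reached either by the usual rule using an infected $w\in W$ as one of its witnesses, or by the special rule at $h$ using $H$ as one of the two contributing cliques (the $H_i$-case forces $|S\cap W|\ge r-4$, and the $H_j$-case forces at least one seed in $W$); in either mechanism the number of augmented seeds inflicted on $T'$ can be amortized against the seeds already present in $W$. An alternative route, in the spirit of Riedl \cite{Rie12}, is to run the induction at the clique-tree level rather than the vertex level, viewing $(r-2)_*$-BP as a two-level process whose local states live on cliques and whose transitions cross hinges, and then applying Riedl-type linear closure bounds on the resulting tree of cliques.
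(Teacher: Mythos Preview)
Your leaf-peeling induction is a different route from the paper's, which gives a direct global double-count: it tracks $t_k$, the number of vertices infected with exactly $k$ previously infected neighbours, and proves that each special $(r{-}3)$-step is later compensated by a $k$-step with $k\ge r-1$ (since the clique $H_i$ involved in a special step must eventually become fully infected, forcing its last vertex to have $\ge r-1$ infected neighbours). This inequality, combined with a second vertex/edge count obtained by exploring the $K_r$-tree one clique at a time, yields $|\langle S;T\rangle_*|=O(|S|)$ with explicit constants and no recursion on $\vartheta$.

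Your case $S\cap W=\emptyset$ is handled correctly, but the refinement you sketch for $|S\cap W|\in\{1,2\}$ does not close the induction, and I do not see a way to repair it along these lines. Take $r=5$ and let $T$ be a path of three cliques $H_1$--$H_2$--$H_3$ with disjoint hinges; peel the leaf $H_1$, with private part $W$ and hinge $h=uv$, and set $S=\{w,x\}$ where $w\in W$ and $x$ is the unique private vertex of $H_2$. The special rule at $h$ fires (one infected vertex on each side) and one computes $\langle S;T\rangle_*=V(H_1)\cup V(H_2)$, whereas $\langle\{x\};T'\rangle_*=\{x\}$ and even $\langle\{x,u\};T'\rangle_*=\{x,u\}$. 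Thus \emph{both} hinge vertices are genuinely new although $|S\cap W|=1$, and the domination you need genuinely requires adding both $u$ and $v$ as seeds to $T'$; your clause ``the $H_i$-case forces $|S\cap W|\ge r-4$'' gives only $|S\cap W|\ge 1$ here, which is vacuous. One seed in $W$ therefore forces two augmented seeds in $T'$, and no choice of $C_r$ makes $2C_r+(r-2)\le C_r\cdot 1$ hold. The underlying phenomenon --- a single seed on each side of a hinge unlocking the entire adjacent clique via the special rule --- is exactly the $1\!\mapsto\!2$ amplification that a leaf-by-leaf induction cannot absorb, and exactly what the paper's global argument handles by charging each special step against a subsequent high-degree infection. (As a secondary point, you invoke monotonicity of $(r-2)_*$-BP; this is plausible but not immediate from the definition, since the special-step condition is phrased with specific counts and the infected endpoint of $e$ is chosen arbitrarily, so it would need a short argument.)
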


In other words the 
$(r-2)_*$-BP process can expand at most linearly. 
In fact, in our application of this lemma,
sub-exponential expansion would 
suffice, but we do not know a simpler 
proof of such a weaker result.

Before proceeding with the proof, which is somewhat subtle, 
let us give some intuition for why such a result 
should be true. 
The edge density of a  $K_r$-tree $T$
is approximately 
\[
\lambda'=
\frac{r+1}{2}
\approx 
\frac{1+\vartheta({r\choose2}-1)}{2+\vartheta(r-2)}
\] 
if its order $\vartheta$ is large. 
If the infection spreads 
via $(r-2)_*$-BP
mostly locally inside its copies of $K_r$, 
then the edge density of 
the graph $T_*\subseteq T$ induced by 
$\langle S;T\rangle_*$ will be lower (since $K_r$ is strictly balanced). 
If, however, the infection spreads more globally,  
and the edge density of $T_*$ is closer to 
$\lambda'$, 
then many full copies of $H$ will become infected. 
Intuitively, in this case, we expect that many times during the 
$(r-2)_*$-BP process, vertices will become infected by  
more than the minimal number $r-2$ of infected neighbors (note that, 
if a copy of $K_r$ becomes fully infected, then its  
last vertex to become infected has at least $r-1$ infected neighbors
at the time of its infection). 

As in the work of Riedl \cite{Rie12}, a key step in the following proof
is to keep track of the number of infected neighbors
that vertices have when they become infected. 
The added difficultly, in the current proof, is the presence
of special steps, in which vertices can become infected with 
less than the usual threshold number of infection. 
A careful analysis is required to show that 
all such special steps are compensated for 
later on during the process.

\begin{proof}
Let $S$ be a set of seeds for the $(r-2)_*$-BP process
on a $K_r$-tree $T$. 
Let $H_1,\ldots,H_\vartheta$  be the copies of $K_r$
used in the construction of $T$ (as in Definition \ref{D_Kr_tree}). 
We also let 
\begin{itemize}
\item $I_*=\langle S;T\rangle_*$ be the set
of all eventually infected vertices; 
\item $I=I_*\setminus S$ be the set of all 
infected non-seed vertices; and 
\item $E$ be the set of all edges with either both endpoints in $I$
or one in $S$ and the other in $I$. 
\end{itemize}

Note that each vertex in $I$ is incident to at least $r-3$
edges in $E$. 

The proof follows three main parts.

{\bf Part 1.} First, we relate $|E|$ 
and $| I |$
by 
carefully assigning edges to vertices at the time
of their infection. 

In each step of the infection process, 
when a vertex $v$ is infected, 
there are at least $r-3$
edges in $E$ between $v$ 
and previously infected vertices. 
We call a step a {\it $k$-step} if there are exactly $k$ such edges. 
Furthermore, we let 
$t_k$ denote the total number of $k$-steps
during the process.  

\begin{claim}\label{clm:partIbnd} We have that 
\[
|E|
\ge \frac{r+1}2|I| 
+\frac 18\sum_{k\ge r-1}kt_k.
\]
\end{claim}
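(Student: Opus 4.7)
The plan is a double counting of back-edges, combined with a structural compensation argument using the geometry of the $K_r$-tree $T$. First, I would assign each edge $f \in E$ to its later-infected endpoint in $I$. Since every edge in $E$ has at least one endpoint in $I$, summing back-edges over infection events gives $|E|=\sum_{k \ge r-3} k\,t_k$. Combined with $|I|=\sum_k t_k$, the claim reduces to
\[
\sum_k \Bigl(k - \tfrac{r+1}{2}\Bigr) t_k \;\ge\; \tfrac{1}{8}\sum_{k \ge r-1} k\, t_k.
\]
For $k \ge r-1$ the coefficient $7k/8 - (r+1)/2 \ge (3r-11)/8 > 0$; for $k = r-2$ the coefficient $(r-5)/2$ is nonnegative when $r \ge 5$. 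The only potentially negative contribution comes from special steps ($k=r-3$), whose coefficient $(r-7)/2$ is negative for $r\in\{5,6\}$, and these must be compensated.

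For each special step at internal edge $e = uv \in H_i \cap H_j$, the infection of $u$ gives $v$ exactly $r-2$ infected neighbours in $H_i \cup H_j$, so $v$ is infected next by the usual rule (contributing to $t_{r-2}$). The remaining two uninfected vertices of $H_i$ then each have $r-2$ infected neighbours inside $H_i$, so $H_i$ becomes fully infected; its last vertex to be infected has at least $r-1$ back-edges within $H_i$ alone, producing a $k \ge r-1$ event. For $r=5$ the analogous cascade inside $H_j$ also completes, producing a second such event; for $r \ge 7$ no compensation is needed, and for $r=6$ one such event per special step already suffices for the algebra.

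For the injective assignment that turns the per-step bound into a global one, I would use that a copy $H$ employed in a special step is fully infected by its cascade, and so cannot feature in any later special step (which would require an uninfected $u'\in e' \subset H$). Thus distinct special steps use pairwise disjoint pairs of copies, and feeding the bounds above into the reduced inequality closes the claim.

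The principal obstacle is the finer injectivity required for $r=5$: even though the pairs of copies are disjoint, two such copies may still share a vertex, so the naive ``last-vertex-of-$H_i$'' charging could collide at a shared vertex. I plan to address this by an exchange argument: if the same vertex $v$ is charged by two distinct special steps, then $v$ lies in two of the used copies and hence has back-edge count at least $2(r-1)-1 = 2r-3$ at its infection time (as two copies of a $K_r$-tree meet in at most two vertices, by Claim \ref{clm:Kr_tree_structure}). Such a vertex contributes well above its share to $\sum_{k \ge r-1} k\, t_k$, and the $1/8$ slack in the claim absorbs the collision. The numerics at $r=5$ are tight, so this bookkeeping must be executed carefully.
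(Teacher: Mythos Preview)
Your approach matches the paper's: express $|E|$ and $|I|$ as $\sum_k k t_k$ and $\sum_k t_k$, then compensate the negative $k=r-3$ contribution (for $r\in\{5,6\}$) by the $k\ge r-1$ events in the cascade following each special step, yielding $t_{r-3}\le\sum_{k\ge r-1}t_k$ for $r=6$ and $2t_{r-3}\le\sum_{k\ge r-1}t_k$ for $r=5$.

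The ``principal obstacle'' you raise, however, does not exist. You are charging special steps to infection \emph{events}, not to vertices. Since usual steps have priority, the cascade of usual steps following a special step---which fully infects $H_i$, and for $r=5$ also $H_j$---completes before any subsequent special step can occur. Hence the $k\ge r-1$ events produced by distinct special steps lie in disjoint time intervals and are automatically distinct; a vertex is infected only once, so it cannot serve as ``last vertex of $H_i$'' for two different special steps. No exchange argument is needed. (Minor note: for $r=6$ the cascade does \emph{not} fully infect $H_j$, so your blanket claim that both copies become fully infected holds only for $r=5$; this is harmless, since only $H_i$ is needed when $r=6$.)
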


Since the inequality may not be intuitively clear, let us give some 
explanation before the proof. 
Recall that $(r+1)/2$ is the (asymptotic) edge density of a $K_r$-tree $T$.
Consider the subgraph $G_*$ induced by all eventually infected vertices
in $I_*$. 
Intuitively, in order for the infection to spread, 
the seed set $S$
will need to cover some of the less dense 
regions of $T_*$. 
As a result, the ratio of the number of edges in 
$E$ by the number of eventually infected non-seeds in $I$ will pick up a 
``boost'' (the sum over $k \ge r-1$ in the claim), 
corresponding to times when vertices 
have strictly more than $r-2$ (at least $r-1$) 
infected neighbors (associated with denser regions of $T_*$).

\begin{proof}
Clearly,
\[
|I|=\sum_{k\ge r-3} t_k
\]
and 
\[
|E|=\sum_{k\ge r-3} k t_k.
\]
Therefore, 
\begin{align}
\nonumber |E|-\frac{r+1}2|I| &= \sum_{k\ge r-3}\left(k-\frac{r+1}2\right)t_k\\
&\ge \frac{r-7}2\cdot t_{r-3}+\sum_{k\ge r-1}\left(k-\frac{r+1}2\right)t_k\,.\label{eq:t_sum_bound}
\end{align}
In this inequality we have used $r\ge 5$ to deduce that the 
(discarded) 
summand 
$(r-5)/2\cdot t_{r-2}$, corresponding to $k=r-2$, is non-negative.

If $r\ge 7$ then the claim 
follows since the first term in \eqref{eq:t_sum_bound} 
is non-negative and, in addition, 
$k-(r+1)/2\ge k/8$ for every $k\ge r-1$ and $r\ge 7$.

For $r=5,6$ we also need to bound $t_{r-3}$ from above. 
Note that $(r-3)$-steps are special steps, in which
one of the two uninfected vertices in some internal edge 
$e$ 
becomes infected. This occurs due to the fact that there are exactly
two $H_i\neq H_j$, both containing $e$, where $H_i$ has exactly 
$r-4$ infected vertices and $H_j$ has exactly one infected vertex 
(not in $e$, whose vertices are uninfected). 
Furthermore, recall that special steps are only taken
when usual steps are not possible. 
Therefore, after this vertex in $e$ becomes infected in a special step, 
all of the other vertices in $H_i$ will become infected
by usual steps. The step that infects the last vertex to become infected in  
$H_i$ will be a $k$-step, for some $k\ge r-1$. Altogether, we find that  
\[
t_{r-3}\le \sum_{k\ge r-1} t_k. 
\]
Consider the case $r=6$. Combining this inequality 
with \eqref{eq:t_sum_bound} yields
\[
|E|-\frac 72|I| \ge \sum_{k\ge 5}(k-4)t_k\,,
\]
and the claim follows since $k-4\ge k/8$ for every $k\ge 5$.

In the case $r=5$ we need a slightly better 
upper bound for $t_2$. 
In this case, since $r-4=1$, $H_i$ and $H_j$ 
play the same role and will both become 
entirely infected after the special step. 
Therefore, the two steps that 
infect the last vertex in each of them 
are a $k$-step for some $k\ge 4$. 
We deduce that 
\[
2t_2\le \sum_{k\ge4} t_k.     
\]
Combining this inequality 
with \eqref{eq:t_sum_bound} gives
(in the case $r=5$) that 
\[
|E|-3|I| \ge \sum_{k\ge 4}(k-7/2)t_k\,,
\]
and the claim follows since $k-7/2\ge k/8$ for every $k\ge 4$.
\end{proof}

{\bf Part 2.} Next, we obtain some further inequalities
by considering an exploration process of $T$, one $H_i$ at a time.

To begin, we reveal one its edges. Then, in each 
subsequent step, we reveal an unexplored $H_i$ 
which has exactly one edge in the subgraph of $T$ 
revealed so far. 
Since $T$ is a $K_r$-tree, all of $T$ will eventually
be revealed by this procedure. 
Also note that 
exactly $r-2$ vertices and $\binom r2-1$ edges in $H_i$ are new, 
as the other $2$ vertices are in a previously revealed edge. 

Let $n_j$, for $0\le j\le r-3$, be the number of steps in the exploration process
when exactly $j$ of the new vertices in the revealed $H_i$ are in $I_*$. 
Quite crucially, we define $n_{r-2}$ slightly differently, as
the number of steps in which all $r-2$ new vertices 
(and hence all $r$ vertices in $H_i$) are infected, but that not all 
vertices in $H_i$ are seeds. 

The lower bound
\begin{equation}\label{E_ILB}
\sum_{j=1}^{r-2}jn_j
\le
|I_*|
\end{equation}
is clear. To obtain an upper bound, we note that 
the only other infected vertices that are 
not counted by this sum are either in the initial 
edge or in $S$. Hence
\begin{equation}\label{E_IUB}
|I_*|
\le 
\sum_{j=1}^{r-2}j n_j +(2+|S|).
\end{equation}

Next, we claim that 
\begin{equation}\label{E_EUB}
|E|
\le 
1+\sum_{j=1}^{r-2}\frac{j(j+3)}2n_j
\end{equation}
Indeed, the $1$ before the sum accounts for the possibility 
that the initial edge is in $E$. In addition, 
if $j$ new vertices from $I_*$ are exposed in a step then at most 
$\binom j2+2j=j(j+3)/2$ new edges from $E$ are exposed in that step.

Finally, we claim that 
\begin{equation}\label{E_hatLB}
(r-2)n_{r-2}\le \sum_{k\ge r-1} kt_k.
\end{equation}
To see this, consider a step in which
all $r-2$ of the new vertices in $H_i$ are infected, 
but that not all vertices in $H_i$ are seeds. 
Then, when the last vertex $v$ in $H_i$ is
infected it has at least $r-1$ infected neighbors. 
If $v$ is amongst the new 
vertices in this step then we obtain $r-1$ 
new edges in $E$; and otherwise, 
only $r-2$ such edges. 
The result follows, noting that 
each step in the $(r-2)_*$-BP process
that such a vertex $v$ becomes infected is a $k$-step,
for some $k\ge r-1$. 
All of the above described edges, of which there are at least $(r-2)n_{r-2},$
contribute to the infection of these vertices. 

{\bf Part 3.} To conclude, we combine the above inequalities. 

First, we multiply \eqref{E_ILB} by $(r+1)/2$, 
add it to \eqref{E_EUB}, and rearrange, so as  
to obtain
\[
|E|
\le
1+\frac{r+1}2|I_*|-\sum_{j=1}^{r-3}\frac {j(r-2-j)}2n_j\,.
\]
Together with Claim \ref{clm:partIbnd} and \eqref{E_hatLB}, this implies that 
\[
\frac{r+1}2|I|+\frac{r-2}{8}n_{r-2}
\le 
1+\frac{r+1}2|I_*|-\sum_{j=1}^{r-3}\frac {j(r-2-j)}2n_j, 
\]
and so 
\[
\sum_{j=1}^{r-3}\frac {j(r-2-j)}2n_j
+\frac{r-2}{8} n_{r-2}
\le
1+\frac{r+1}2|S|.
\]
Multiplying this inequality by 8, and then adding 
$2+|S|$ to both sides, yields
\[
\sum_{j=1}^{r-3}4j(r-2-j)n_j
+(r-2)n_{r-2} +(2 +|S|)
\le
10+(4r+5)|S|.
\]
Finally, using \eqref{E_IUB} and the fact that $j\le 4j(r-2-j)$, 
for every $1\le j\le r-3$,
we find that 
\[
|I_*|
\le 
10+(4r+5)|S|
=O(|S|),
\]
as required. 
\end{proof}

\subsection{Spread lemma}
\label{S_SprLem}

Combining 
Corollary \ref{cor:comp_cor} and Lemma \ref{L_ExpLem}, 
we obtain the following result. 

\begin{lemma}[Spread lemma]
\label{L_SprLem}
Fix $r\ge5$. Let $T$ be a $K_r$-tree 
and  $G$ be a graph with $x=|V(T)\cap V(G)|$
many vertices in common with $T$. 
Let $E_*$ be the set of edges  in $T$
that are contained in a copy of $K_r$  completed by the $K_r$-dynamics
started from $G\cup T$. 
Then
we have that $|E_*|=O(x)$. 
\end{lemma}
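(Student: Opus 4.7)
The proof is going to be a short composition of the two main ingredients already set up in the preceding subsections, so I would present it as a direct corollary of Corollary \ref{cor:comp_cor} and Lemma \ref{L_ExpLem}, together with the elementary observation that $K_r$-trees are degenerate.

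\textbf{Plan of proof.} Set $S=V(T)\cap V(G)$, so that $|S|=x$. By Corollary \ref{cor:comp_cor}, every copy $\tilde H$ of $K_r$ that is completed by the $K_r$-dynamics on $G\cup T$ satisfies $V(\tilde H)\subset V(G)\cup\langle S;T\rangle_*$. I would then take an arbitrary edge $e\in E_*$; by definition $e\subset E(T)$, so in particular both endpoints of $e$ lie in $V(T)$. Since $V(T)\cap V(G)=S\subset\langle S;T\rangle_*$ and $\langle S;T\rangle_*\subset V(T)$, intersecting with $V(T)$ gives that both endpoints of $e$ lie in $\langle S;T\rangle_*$. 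Hence
\[
E_*\subset\bigl\{f\in E(T):f\subset\langle S;T\rangle_*\bigr\}\,.
\]

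\textbf{Degeneracy step.} Next I would bound the number of edges of $T$ spanned by any vertex subset $U\subset V(T)$ by $(r-1)|U|$. This follows immediately from the fact that $T$ is $(r-1)$-degenerate: using the construction order from Definition \ref{D_Kr_tree}, the $r-2$ vertices added by $H_i$ (for $i\ge 2$) each have exactly $r-1$ neighbors among the vertices of $H_1\cup\cdots\cup H_i$ that precede them, and the $r$ vertices of $H_1$ can be ordered so that the $j$th one has at most $r-1$ earlier neighbors. Thus every subgraph of $T$ has a vertex of degree at most $r-1$, so $|E(T[U])|\le(r-1)|U|$ for every $U\subset V(T)$.

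\textbf{Conclusion.} Combining the containment above with the Expansion Lemma (Lemma \ref{L_ExpLem}), I would conclude
\[
|E_*|\le(r-1)\bigl|\langle S;T\rangle_*\bigr|=O(|S|)=O(x)\,,
\]
which is the claimed bound. There is no real obstacle here, since all the heavy lifting was already done: Corollary \ref{cor:comp_cor} shows that any new $K_r$-completion must occur inside the $(r-2)_*$-infected region together with $V(G)$, and Lemma \ref{L_ExpLem} controls the size of that infected region. The only minor point worth stating explicitly is the $(r-1)$-degeneracy of $K_r$-trees, which is immediate from the iterative construction.
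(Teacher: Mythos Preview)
Your proposal is correct and follows essentially the same approach as the paper: apply Corollary \ref{cor:comp_cor} to confine the endpoints of $E_*$ to $\langle S;T\rangle_*$, invoke Lemma \ref{L_ExpLem} to bound $|\langle S;T\rangle_*|=O(x)$, and then observe that any vertex subset of a $K_r$-tree spans only linearly many edges. The paper phrases this last step as ``in every step of the exposure of $T$ there are $O(1)$ vertices and edges from the induced subgraph introduced,'' which is equivalent to your $(r-1)$-degeneracy argument.
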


In other words, if we start with $T$ and add $G$, 
then at most $O(x)$ edges 
in $T$
are ever used by the 
$K_r$-dynamics. 

\begin{proof}
Denote $S=V(T)\cap V(G)$. By Corollary \ref{cor:comp_cor} the endpoints 
of the edges in $E_*$ belong to $I_*:=\langle S;T\rangle_*$. 
In addition, by Lemma \ref{L_ExpLem}, we have that $|I_*|=O(x)$. 
To conclude the proof, we observe that there are at most $O(|I_*|)$ 
edges in the subgraph of $T$ induced by $I_*$. 
Indeed, when we expose the copies of $K_r$ used in the construction of $T$, 
in every step there are $O(1)$ 
vertices and edges from that induced subgraph which are introduced.
\end{proof}

\subsection{The structure of maximal tree parts}
\label{S_TWGdecomp}

Next, using Lemma \ref{L_SprLem}, 
we obtain an upper bound on the 
complexity of the maximal 
tree parts of a witness graph. 
This observation will play a key role
in our improvement on the 
combinatorial bound in Lemma
\ref{L_ubWs}, given in the next section. 

\begin{lemma}
\label{L_targets_in_maxT}
Let $W$ be a WG of cost $\kappa>0$, and $T$ be one of its maximal 
tree parts, as given by the tree decomposition of $W$
(see Defintion \ref{D_maxTPs}). 
Then, $T$ has at most $O(\kappa)$ target edges. 
\end{lemma}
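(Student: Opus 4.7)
The plan is to bound the target edges of $T$ by classifying the REA steps at which they are reused, handling the combinatorial step types by direct counting and the zero-cost IntR type by the Spread Lemma (Lemma~\ref{L_SprLem}). Each target edge of $T$ is reused by some later REA step $j$, apart from at most one exception: the final target $e_m$ of the witness graph, if it happens to lie in $T$. For standard tree steps: reusing a target of $T$ before $T$ is compromised would extend $T$, contradicting maximality, so such reuses occur only after $T$'s compromise, and each then initiates a new tree part having the reused edge as its base; since the total number of tree parts ever created during the REA is $O(\kappa)$ by Corollary~\ref{C_omega} and Lemma~\ref{L_maxTC}, this contributes $O(\kappa)$. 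For bad tree and costly steps: each reuses at most $\binom{r}{2}=O(1)$ edges of $T$ (those lying in $H_j$), and Lemma~\ref{lem:few_bad_TS} bounds the number of bad tree steps by $O(\kappa)$ while the number of costly steps is at most $\kappa$, so these two categories also contribute $O(\kappa)$.

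For the IntR case, I would define $G$ to be the subgraph of the REA graph obtained by deleting both the IntR red edges and the edges of $T$ interior to $T$, i.e., every edge of $T$ other than its base. Two facts need verification. First, $|V(T)\cap V(G)|=O(\kappa)$: the base edge contributes $2$ vertices; costly steps contribute at most $\sum_j|V_j\cap V(T)|\le\sum_j\kappa_j\le\kappa$; bad tree steps contribute $O(r)$ per step over $O(\kappa)$ steps, giving $O(\kappa)$; and, using property~(3b) of Lemma~\ref{lem:tree_decomp_struct}, the vertices of $T$ shared with other tree parts contribute $O(\omega)=O(\kappa)$ by Corollary~\ref{C_omega}. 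Second, every copy $H_j$ completed in the REA is also completed by the $K_r$-dynamics starting from $G\cup T$; I would prove this by induction on the REA step, using the structural fact (from the IntR case of the proof of Lemma~\ref{lem:tree_decomp_struct}) that $V(H_j)\subset V(B_1)$ in every IntR step: each edge of $H_j\setminus e_j$ then lies either in $G\cup T$ directly, or is an earlier IntR red edge already produced by the dynamics by the inductive hypothesis.

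Granted these two facts, the Spread Lemma yields that the set $E_*$ of edges of $T$ appearing in some $K_r$-completion by the dynamics on $G\cup T$ satisfies $|E_*|=O(\kappa)$. Each target edge of $T$ reused by an IntR step lies in some $H_j$ completed by the dynamics, so it belongs to $E_*$, giving an $O(\kappa)$ contribution from the IntR case; combining with the three earlier cases completes the proof. The main obstacle I anticipate is the induction underlying the second fact: one must track the cascade of IntR completions carefully, and the key leverage comes from the $V(H_j)\subset V(B_1)$ property, which forces each IntR $H_j$ to sit within the bad part and ensures the non-IntR portion of $G$ is rich enough for the dynamics on $G\cup T$ to recover each IntR red edge in turn.
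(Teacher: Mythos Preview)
Your approach is correct and hinges on the same key ingredient as the paper---the Spread Lemma applied with $G$ the complement of $T$ in the REA graph---but the organization differs. The paper does not split by the type of step reusing the target edge; instead it observes uniformly that every reused target of $T$ is either an edge of $G$ (this already covers your standard-tree, bad-tree, and costly cases, since in each of these the target edge lies in another maximal tree part's base or in some bad/costly $H_j$) or lies in a copy of $K_r$ completed by the dynamics on $G\cup T$. Both possibilities are then bounded by $O(\kappa)$ once $|V(G)\cap V(T)|=O(\kappa)$. Your case analysis is valid but somewhat redundant, since the Spread Lemma handles all four cases at once.

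Two points where your write-up needs tightening. First, property~(3b) alone does not bound $|V(T)\cap V(T_i)|$ for another maximal tree part $T_i$: that property applies only to two tree parts coexisting in the same component, and two maximal tree parts may never coexist (one can be compromised before the other is created). The right tool is Corollary~\ref{C_TP_sizes}, which gives $\sum_i|V(T_i)|\le\sigma+O(\kappa)$ and hence $\sum_{i\ne T}|V(T_i)\cap V(T)|=O(\kappa)$. Second, the induction you flag as the main obstacle is actually easier than you suggest and does not require $V(H_j)\subset V(B_1)$: for any step $j$, every edge of $H_j\setminus e_j$ is either black (hence in $G\cup T$) or is the red edge $e_i$ of some earlier step $i<j$, which lies in $\langle G\cup T\rangle_{K_r}$ by the inductive hypothesis. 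Thus $H_j\setminus e_j\subset\langle G\cup T\rangle_{K_r}$ and $e_j$ is added; when $j$ is an IntR step, $e_j\notin G\cup T$, so $H_j$ is genuinely completed by the dynamics and the target edge of $T$ inside $H_j$ lands in $E_*$.
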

\begin{proof}
Denote the size of $W$ by $\sigma$. Recall that all the target edges of $T$, 
except perhaps one, must be used in the REA.
Let $G$ be the (typically non-disjoint) union of: 
(i) the other maximal tree parts $T_1,...,T_{\omega-1}$ of $W$, and 
(ii) all the copies of $K_r$ that are added in bad tree steps or costly steps. 

Here we view $G$ and $T$ as edge-colored graphs 
that contain both black and red edges. 
To be precise, 
a tree part $T_i$ contains all the edges in the tree steps that were used to create it, 
and they are assigned the color that was given to them by the REA. 
Similarly, each copy of $K_r$ corresponding to a bad tree steps 
or costly steps contains all $\binom r2$ edges 
(some red and some black) according to the way the edges were 
colored in the corresponding step. 
Importantly, note that while all the black edges of $W$ must belong to 
$G\cup T$, some of the red edges of $W$ may be absent from $G\cup T$. 
This is the case for the red edges that are added in IntR steps, 
but are not used in 
subsequent 
tree steps or costly steps. However, all the red edges of $W$ 
clearly belong to $\langle G\cup T\rangle_{K_r}$. Hence, 
every (red) target edge of $T$ that gets used in the REA is either an 
edge of $G$ or it belongs to a copy of $K_r$ that is completed by the 
$K_r$-dynamics started from $G \cup T$. 

Therefore,  using Lemma \ref{L_SprLem}, we complete the proof by 
showing that the intersection of $T$ and $G$ has $O(\kappa)$ vertices 
(and can therefore span only $O(\kappa)$ edges in the $K_r$-tree $T$). 
First, by Lemma \ref{lem:few_bad_TS} and the definition of the cost of a step, 
there are only $O(\kappa)$ vertices in the copies of $K_r$ that are 
added in bad tree steps or costly steps. This accounts for the vertices in 
$T\cap G$ that participate in such steps. In addition, we derive
\[
|V(T)|+\sum_{i=1}^{\omega-1}|V(T_i)\setminus V(T) | \ge \sigma  - O(\kappa)\,,
\]
since every vertex that appears in some maximal tree part is accounted 
for at least once in the left side, and there are at most $O(\kappa)$ vertices 
in $W$ that do not appear in any maximal tree part. 
By combining this inequality with Claim \ref{C_TP_sizes}, we derive
\[
\sum_{i=1}^{\omega-1} |V(T_i)\cap V(T)| \le O(\kappa)\,,
\]
and the proof is concluded.
\end{proof}

\begin{claim}\label{clm:num_TC_with_few_targes}
The number of unlabeled tree components with 
$x$ vertices and $\Delta$ target edges is at most $\gamma^x\cdot x^{O(\Delta)}$.
\end{claim}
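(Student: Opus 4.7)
The plan is to decompose any unlabeled tree component $C$ with $\Delta$ target edges into $\Delta$ standard TWGs glued along $\Delta-1$ black internal edges, and then to count through this decomposition.

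First, I would orient each red internal edge of $C$ from the copy that claims it (the copy for which this edge is its designated red edge) to the other copy containing it. The resulting directed subgraph $F$ of the tree of copies of $C$ has out-degree one at every copy whose red edge is internal, and out-degree zero at each of the $\Delta$ copies whose red edge is boundary. Hence $F$ is a directed sub-forest of the tree of copies with $\vartheta-\Delta$ arcs (where $\vartheta=(x-2)/(r-2)$) and $\Delta$ components, each an in-tree rooted at a target copy. The remaining $\Delta-1$ edges of the tree of copies are black internal edges, which I call the \emph{cuts}. A direct check using Definition \ref{D_TWGroot} shows that the sub-$K_r$-tree on the copies of each in-tree is a TWG rooted at its sink: non-sink copies have red edge equal to their unique outgoing arc in $F$, which plays the role of the primal-branch target at that copy. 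Conversely, any $\Delta$ TWGs glued in a tree-like manner by identifying $\Delta-1$ pairs of non-target boundary edges produce a valid tree component with $\Delta$ target edges.

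To count, let $\vartheta_i$ be the size of the $i$th TWG piece and $x_i=(r-2)\vartheta_i+2$. Since each of the $\Delta-1$ gluings identifies two vertices, $\sum_i x_i = x+2(\Delta-1)$. There are at most $x^{O(\Delta)}$ size compositions $\vartheta=\vartheta_1+\cdots+\vartheta_\Delta$, and by Lemma \ref{lem:count_TWG} together with the Fuss--Catalan asymptotics \eqref{E_FCasy}, the number of unlabeled TWGs of size $\vartheta_i$ is at most $O(\gamma^{x_i})$, so the product over pieces is at most $C^\Delta\gamma^{x+2(\Delta-1)}$ for some $C=C(r)$. Specifying the gluings amounts to choosing a spanning-tree meta-structure on the $\Delta$ pieces together with, for each meta-edge, an ordered pair of non-target boundary edges (one from each of two distinct pieces) and the orientation of the identification; since each piece has $O(x)$ boundary edges, this contributes at most $\Delta^{O(\Delta)}\cdot x^{O(\Delta)}=x^{O(\Delta)}$ many gluings. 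Multiplying these contributions and absorbing $\gamma^{O(\Delta)}$ into $x^{O(\Delta)}$ yields the desired bound
\[
\gamma^{x+2(\Delta-1)}\cdot x^{O(\Delta)} = \gamma^x\cdot x^{O(\Delta)}.
\]

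The main technical step is to verify that every tree component with $\Delta$ target edges really does arise through this decomposition-and-gluing recipe: one must check that $F$ is a forest (immediate, since each internal edge is claimed by at most one of the two copies containing it), that the sub-$K_r$-trees cut out by $F$ satisfy the recursive TWG structure of Definition \ref{D_TWGroot}, and that the list of pieces together with the cuts reconstructs $C$ up to the small overcounting inherent in the informal choice of ordering and orientation. Since only an upper bound is required, any such overcounting is harmless and is safely absorbed into the $x^{O(\Delta)}$ factor.
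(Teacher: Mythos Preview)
Your approach is essentially the paper's: decompose a tree component with $\Delta$ targets into $\Delta$ TWGs whose vertex counts sum to $x+2(\Delta-1)$, bound the number of unlabeled TWGs by $O(\gamma^{x_i})$ via the Fuss--Catalan asymptotics, and multiply by $x^{O(\Delta)}$ for the sizes and gluings. The paper obtains the TWGs sequentially via the WGA (extracting a TWG for one target, recoloring, and repeating), whereas you attempt a one-shot structural decomposition by orienting red internal edges. The counting portions are identical.

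There is, however, a genuine gap in your decomposition. You repeatedly assume that each internal edge lies in exactly two copies of $K_r$ (``the other copy containing it'', ``the tree of copies'', ``the remaining $\Delta-1$ edges''). But a $K_r$-tree can have an internal edge in three or more copies: e.g.\ take $H_1$ with red edge $e_1$, then $H_2$ and $H_3$ both attached along $e_1$. Here $e_1$ is a red internal edge in all three copies, your orientation of $e_1$ is ambiguous, and---whichever way you orient---the two resulting in-trees are connected via the \emph{red} edge $e_1$, not a black one. Your gluing recipe (identify pairs of non-target \emph{black} boundary edges) then fails to reconstruct this TC: in one piece $e_1$ is an internal red edge and not even present in the TWG, while in the other it appears as a black boundary edge, so the union has $e_1$ black rather than red. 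The paper's iterative WGA extraction with recoloring avoids this, since after the first TWG is extracted, the red edge $e_1$ is effectively available as black for subsequent extractions. Your argument can be repaired (allow cuts to identify a black edge of one piece with \emph{any} edge---or even any pair of vertices---of another, and recolor accordingly), but as written the surjectivity of your reconstruction map is not established.
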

\begin{proof}
Consider the target edges $e_1,...,e_\Delta$
of a tree component $T$, listed in some arbitrary order. 
For each target edge $e_i$, let $T_i\subseteq T$ be a TWG for $e_i$,
which can be found
using the WGA. After $T_i$ is found, we color all its edges in 
$T$ black, so that all future TWGs $T_j$, 
with $j>i$, will share at most one edge with it. 
Consider the auxiliary graph, 
whose vertices are
$T_1,...,T_\Delta$,
and 
such that two are adjacent if they share an edge. 
By induction, this auxiliary is a tree, 
and so 
\[
\sum_{i=1}^{\Delta} |V(T_i)| = x +2(\Delta-1)\,.
\]
Hence, to choose such a witness graphs there are 
$x^{O(\Delta)}$ choices for the sizes of the TWGs, 
at most $\gamma^{x+2(\Delta-1)}$ choices 
for the tree structure of the TWGs, by 
\eqref{E_tkasy} above, and $x^{O(\Delta)}$ ways to 
glue together the TWGs into  $T$. This concludes the proof. 
\end{proof}

\subsection{The WG bound, revisited}
\label{S_WGrev}

With Lemma \ref{L_targets_in_maxT}
in hand, we can significantly improve 
upon 
Lemma \ref{L_ubWs}.

\begin{lemma}
\label{L_ubWs2}
The number of (labelled) witness graphs
$W$ for a given edge $e$ with 
size $\sigma$ and cost $\kappa>0$
is at most 
\[
\gamma^{\sigma} \cdot \sigma! \cdot \sigma^{O(\kappa^2)}\,.
\]
\end{lemma}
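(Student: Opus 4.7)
\smallskip

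The plan is to follow the construction-and-count strategy used in the proof of Lemma~\ref{L_ubWs}, but to replace the crude bound $A^x$ on the number of unlabeled TCs of a given size with the much sharper estimate from Claim~\ref{clm:num_TC_with_few_targes}. The key new input is Lemma~\ref{L_targets_in_maxT}, which controls the number of target edges in each maximal tree part of $W$ by $O(\kappa)$.

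More precisely, I would construct $W$ by specifying: (i) the number $\omega$ and sizes $t_1,\dots,t_\omega$ of its maximal tree parts; (ii) an unlabeled tree component realizing each $t_i$, together with the identity of which of its edges are target edges; (iii) the red/black edge colorings of the $O(\kappa)$ copies of $K_r$ arising from the bad tree steps and costly steps (together with the combinatorial data specifying how these copies are assembled into $W$); and (iv) an onto labeling of the underlying vertex multiset to $\{1,\dots,\sigma+2\}$. By Corollaries~\ref{C_omega} and~\ref{C_TP_sizes}, we have $\omega=O(\kappa)$ and $\sum_i t_i=\sigma+O(\kappa)$, so (i) contributes $\sigma^{O(\kappa)}$ choices, (iii) contributes $2^{O(\kappa)}$, and (iv) contributes $\sigma!\cdot\sigma^{O(\kappa)}$.

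The heart of the improvement lies in step (ii). By Lemma~\ref{L_targets_in_maxT}, each maximal tree part has at most $\Delta=O(\kappa)$ target edges, so by Claim~\ref{clm:num_TC_with_few_targes} the number of choices for the $i$th maximal tree part is at most $\gamma^{t_i}\cdot t_i^{O(\kappa)}$. Multiplying over all $\omega=O(\kappa)$ maximal tree parts gives
\[
\prod_{i=1}^\omega \gamma^{t_i} t_i^{O(\kappa)}
\le
\gamma^{\sigma+O(\kappa)}\cdot\sigma^{O(\kappa)\cdot O(\kappa)}
=
\gamma^{\sigma}\cdot\gamma^{O(\kappa)}\cdot\sigma^{O(\kappa^2)}.
\]
We must also specify which of the edges in each maximal tree part are target edges, but this only adds a factor of $\binom{e(T_i)}{\Delta}\le t_i^{O(\kappa)}$ per tree part, which is likewise absorbed into $\sigma^{O(\kappa^2)}$.

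Assembling the four factors and absorbing $\gamma^{O(\kappa)}$, $2^{O(\kappa)}$ and $\sigma^{O(\kappa)}$ into $\sigma^{O(\kappa^2)}$ (using that $\sigma$ can be assumed bounded away from $1$, else $W$ is trivial) yields the desired bound $\gamma^\sigma\cdot\sigma!\cdot\sigma^{O(\kappa^2)}$. The main obstacle is really conceptual rather than computational: one must verify that Claim~\ref{clm:num_TC_with_few_targes} can be legitimately applied to each maximal tree part, which requires knowing that all the target-edge structure relevant to the WGA is already captured by the at most $O(\kappa)$ target edges guaranteed by Lemma~\ref{L_targets_in_maxT}; everything else is bookkeeping.
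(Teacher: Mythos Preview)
Your proposal is correct and follows essentially the same route as the paper: adapt the construction in Lemma~\ref{L_ubWs}, replacing the crude $A^{t_i}$ bound in step~(ii) by the sharper $\gamma^{t_i}t_i^{O(\kappa)}$ from Claim~\ref{clm:num_TC_with_few_targes} (justified via Lemma~\ref{L_targets_in_maxT}), and then multiply over the $\omega=O(\kappa)$ maximal tree parts to get $\gamma^{\sigma+O(\kappa)}\sigma^{O(\kappa^2)}$. Your extra remark about separately choosing which edges are targets is in fact unnecessary, since Claim~\ref{clm:num_TC_with_few_targes} already counts edge-colored tree components with a prescribed number of target edges.
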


\begin{proof}
Recall that, in the proof of Lemma \ref{L_ubWs}, 
we used the simple bound $A^x$
for the number of 
TCs $T$ with $x$ unlabelled vertices.
However, by Lemma \ref{L_targets_in_maxT}, we now know that 
each of the $O(\kappa)$ many maximal tree parts $T$ of 
$W$ have $O(\kappa)$ many targets, and by 
Claim \ref{clm:num_TC_with_few_targes} there are at most $\gamma^xx^{O(\kappa)}$ such TCs. 

We adapt the proof of Lemma \ref{L_ubWs} such that in item 
(ii) we choose for each $T_i$ one of the (at most) 
$\gamma^{t_i}\cdot \sigma ^{O(\kappa)}$ tree components with 
$t_i$ vertices and $O(\kappa)$ target edges. 
Therefore, the total number of choices for item (ii) is 
$$
\prod_{i=1}^{\omega}\gamma^{t_i}\sigma^{O(\kappa)}=
\gamma^{\sum_{i=1}^{\omega}t_i}\sigma^{O(\omega\kappa)}
=
\gamma^{\sigma+O(\kappa)}\sigma^{O(\kappa^2)}\,,$$
where the last equality is by Corollaries \ref{C_omega} and \ref{C_TP_sizes}.
The rest of the argument is not changed and the result follows directly.
\end{proof}

\begin{proof}[Proof of Proposition \ref{P_pcLBsharp}]

We adapt the proof of 
Proposition \ref{P_pcLBcoarse} 
above. 
Once again, we fix some $e\in E(K_n)$
and show that 
$e\notin E(\langle\Gnp\rangle_{K_r})$
with high probability. 

Since
\[
p=\left(\frac{1-\eps}{\gamma n}\right)^{1/\lambda}
\]
we have by Lemma \ref{L_lbTWG}
that with high probability 
$e$ is not added by a TWG. 

Hence, once again, by Lemma \ref{L_AL}, 
it remains to rule out the existence of 
WGs
\begin{itemize}
\item $W$ for $e$ with $\kappa(W)\ge1$
and $\sigma(W)/\log n\le \beta\log n$; or 
\item $W'$, for any edge $f$, with 
$\sigma(W')/\log n\in [\beta,{r\choose2}\beta]$,
\end{itemize}
where $\beta=\beta(\eps)>0$
will be determined below. 
As before, we can assume that 
any such $W$ and $W'$
has cost $O(\log\log n)$. 

For the first event, we apply Lemmas
\ref{L_chikappa} and 
\ref{L_ubWs2}. 
The expected number of such $W$
is at most 
\begin{align*}
\sum_{\sigma=0}^{\beta\log n}~
\sum_{\kappa=1}^{O(\log\log n)}
\binom n{\sigma} \gamma^\sigma \sigma!\sigma^{O(\kappa^2)}p^{\lambda \sigma+1+\xi\kappa} \\ \le 
(\log n)^{O((\log\log n)^2)}\cdot p\cdot 
\sum_{\sigma=0}^{\beta\log n} (1-\eps)^\sigma\,,
\end{align*}
which tends to $0$ as $n\to\infty$.

For the second event, the expected number of such $W'$ is at most 
\begin{align*}
\binom n2\sum_{\sigma=\beta\log n}^{\beta\binom r2\log n}~
\sum_{\kappa=0}^{O(\log\log n)}
\binom n{\sigma} \gamma^\sigma \sigma!\sigma^{O(\kappa^2)}p^{\lambda \sigma+1+\xi\kappa} \\ \le 
(\log n)^{O((\log\log n)^2)}\cdot n^2p \cdot 
\sum_{\sigma\ge \beta\log n} (1-\eps)^\sigma\,.
\end{align*}
which tends to $0$ as $n\to\infty$, provided $\beta>0$ is large enough.
\end{proof}

\section{Subcritical edge expansion}
\label{S_subexp}

In this section we prove Theorem \ref{T_sub_density}, 
which
shows that,
in the subcritical regime, the $K_r$-dynamics increases 
the edge density of $\Gnp$ by a factor that 
converges to a deterministic {\em constant} $\rho=\rho(p)$.
To accomplish this, we show that in the subcritical regime 
(i) almost every infected edge is infected 
by a unique ``small'' TWG, and 
(ii) the number of TWGs in $\Gnp$ is, 
with high probability, close to $\rho \cdot p \binom{n}{2}$.

To compute the constant $\rho$ we consider the
generating function of the $d$th Fuss--Catalan numbers,
given by 
\[
f(x)=\sum_{k=0}^\infty\mathrm{FC}_d(k)x^k\,.
\]
Recall that we denote  $\alpha_d = (d+1)^{d+1}/d^d$.
By \eqref{E_FCasy}, 
$f$ has radius of convergence $1/\alpha_d$, converging 
if and only if $|x|\le 1/\alpha_d$. 
For such $x$, the recursion \eqref{E_FCrec}  implies that
\begin{equation}\label{E_FC_gen}
xf(x)^{d+1}=f(x)-1\,.    
\end{equation}
It can be shown, for any fixed $0<x<1/\alpha_d$, 
that \eqref{E_FC_gen} has precisely two solutions, 
which are both greater than $1$. 
Using the continuity of $f$ and $f(0)=1$, 
one can deduce that $f(x)$ is the smaller of these two solutions. 

In addition, $f(1/\alpha_d)=(d+1)/d$, since this is the unique solution for \eqref{E_FC_gen} when $x=1/\alpha_d$. Hence, we have that $1< f(x) < (d+1)/d$ for every $0<x<1/\alpha_d$ by monotonicity of $f$.

\begin{proof}[Proof of Theorem \ref{T_sub_density}]
Let
\[
\rho 
= \sum_{k=0}^{\infty}
\mathrm{FC}_{\binom r2-2}(k)(1/\bar\alpha)^{k}.
\]
By the discussion above, we find that $\rho$ is the 
smallest root of the equation $\rho^{\binom r2-1}=\bar\alpha(\rho-1)$ 
and that it satisfies $1 < \rho < \frac{\binom r2-1}{\binom r2-2}$.

Let $\eps>0$, and choose a sufficiently 
large integer $K$ so that
\begin{equation}\label{E_rhoK}
\rho_K:=\sum_{k\le K}
\mathrm{FC}_{\binom r2-2}(k)(1/\bar\alpha)^{k}
\in (\rho -\varepsilon,\rho).
\end{equation}

We say that a TWG is {\it large} if its order is greater than $K$, 
and otherwise it is called {\it small}.

Denote by $X_k$ the number of TWGs of order $k$ 
that are contained in $\Gnp$. Then,
\begin{equation}\nonumber
\E(X_k) = \binom n2 \binom {n-2}{(r-2)k}t(k)p^{\lambda(r-2)k+1}\,.    
\end{equation}
If $k$ is fixed then 
\[
((r-2)k)!\binom{n-2}{(r-2)k}\sim n^{(r-2)k}. 
\]
Hence, using Lemma \ref{lem:count_TWG}, 
it follows that 
\begin{equation}\label{E_Exp_of_Xk}
\E(X_k)
\sim 
p\binom n2 \cdot
\mathrm{FC}_{\binom r2-2}(k)(1/\bar\alpha)^{k}. 
\end{equation}

Let us turn to the variance of $X_k$. 
We claim that, for some constant $D>0$, 
we have that 
\begin{align*}
\E(X_k^2)
&\le\E(X_k)+\E(X_k)^2+\sum_{v,e\ge 1}\binom n2^2t(k)^2n^{2(r-2)k-v}p^{2\lambda(r-2)k+2-e}\\
&\le \E(X_k) +\E(X_k)^2+D\left(p\binom n2\right)^2n^{-v+e/\lambda}\,.
\end{align*}
In the first line, the summation is over all the integers $v,e\ge1$ 
for which there exist $k$-TWGs $T_1,T_2$ that share $e$ edges and $v$ vertices. 
The first inequality holds since the first term accounts for pairs of identical TWGs, 
the second for pairs of edge-disjoint TWGs, and the third for pairs of TWGs 
that intersect non-trivailly. For the second inequality, we use the fact that 
$k,\bar\gamma$ are fixed and independent of $n$, hence $D>0$ is some constant.

Let $T_1,T_2$ be distinct $k$-TWGs for the edges $e_1,e_2$ respectively. 
Suppose that they share $e\ge 1$ edges and a set $V$ of $v$ vertices. 
We claim that $e < \lambda v$. Indeed, 
by Lemma \ref{L_strTWB}(1) we have
$e \le \lambda |V \setminus e_1| \le \lambda v$. 
Moreover, both inequalities cannot hold with equality, 
since if $V$ is disjoint from $e_1$, then the intersection of $T_1$ and $T_2$  
cannot be obtained from $T_1$ by removing one of its branches.  
Therefore, $\E(X_k^2)\sim \E(X_k)^2$ and so, by Chebyshev's inequality, 
$X_k\sim\E(X_k)$ in probability as $n\to\infty$. 
Since $K>0$ is a constant, we find that 
\begin{equation}\label{E_smallTWG}
\sum_{k\le K}X_k\sim \rho_K\cdot p\binom n2\,,
\end{equation}
in probability as $n\to\infty$.

Next, 
to bound the contribution from large TWGs, 
we observe that the left side of \eqref{E_Exp_of_Xk} is 
bounded from above by the right side for every $k$ (even if it grows with $n$). 
Therefore, the expected number of large TWGs in $\Gnp$ is 
\[
\E\left(\sum_{k>K}X_k\right) 
< p\binom n2\varepsilon\,.
\]
Therefore,
\begin{equation}\label{E_largeTWGs}
\P\left(\sum_{k>K}X_k>p\binom n2\sqrt\varepsilon\right) <\sqrt\varepsilon\,.
\end{equation}

Let $Y$ denote the number of pairs of small TWGs 
contained in $\Gnp$
with the same target edge $f\in (K_n)$. Then,
\[
\E(Y) \le \binom n2 \sum_{k_1,k_2\le K}\sum_{v,e}t(k_1)t(k_2) n^{(r-2)(k_1+k_2)-v}p^{\lambda(r-2)(k_1+k_2)+2-e}\,,
\]
where the second sum is over all the integers $v,e$ for which 
there is a $k_1$-TWG and a $k_2$-TWG for the same edge $f$ that have 
$e$ edges and $v$ vertices in common (besides the endpoints of $f$). 
Since $K,\bar\gamma$ are constants independent of $n$, we find that
\[
\E(Y) \le D\cdot \binom n2 p^2\cdot n^{-v+e/\lambda}\,.
\]
for some constant $D$. By Lemma \ref{L_strTWB}(1), $e \le \lambda v$, 
whence
$\E[Y] \le D \binom n2p^2$, and 
\begin{equation}\label{E_pairs_of_TWGs}
\P\left(Y>p\binom n2\varepsilon\right) \to 0\,,
\end{equation}
as $n\to\infty$.

Let $Z$ denote the number of edges $f\in K_n$ that can be added 
by the $K_r$-dynamics on $\Gnp$ by a non-tree WG 
(with cost $\kappa\ge1$). 
The proof of Proposition \ref{P_pcLBsharp} shows that for some constant
$\beta>0$ the following holds: 
\begin{enumerate}
\item The expected number of edges that are 
added by a non-tree WG of size at most $\beta \log n$ is at most
${n\choose2} p^{1+o(1)}$. 
\item The probability that there exists an edge $f\in K_n$ that is 
added by a WG of size at least $\beta \log n$ tends to $0$ as $n\to \infty$.
\end{enumerate}
We conclude that 
\begin{equation}\label{E_num_of_WGs}
\P\left(Z>p\binom n2\varepsilon\right) \to 0\,,
\end{equation}
as $n\to\infty$.

Finally, we observe that 
\[
\left| |E(\langle \Gnp\rangle_{K_r})| - \sum_{k\le K}X_k\right| 
\le \sum_{k>K}X_k+Y+Z.
\]
Indeed, on the one hand, an edge can belong to 
$\langle \Gnp\rangle_{K_r}$ but not be a target of a small TWG 
only if it is the target of a large TWG or a non-tree WG. 
On the other hand, counting small TWGs may 
overestimate $\langle \Gnp\rangle_{K_r}$, 
but only by at most the number $Y$ of 
pairs of small TWGs sharing a target edge. 
Combining this observation with 
\eqref{E_rhoK}
and \eqref{E_smallTWG}--\eqref{E_num_of_WGs},
we find that $|E(\langle \Gnp\rangle_{K_r})|$ 
deviates by less than $(3\varepsilon+\sqrt\varepsilon)p\binom n2$ 
from $\rho\cdot p\binom n2$ with probability 
at least $1-\sqrt\varepsilon-o(1)$, and the proof is complete.
\end{proof}

\section{Final remarks}

Many fascinating problems 
remain open.

\subsection{Near-critical structure}
\label{S_ncs}

Despite locating the first-order term of $p_c$,  
explaining ``how" exactly percolation occurs remains open.
Related to this, 
it would be interesting to investigate the 
critical window for $K_r$-percolation. 
For example, what is the second-order term in $p_c(n,K_r)$? 
In addition, simulations suggest that if the edges of $K_n$ are 
added one-by-one in the so-called evolution of random graphs 
then there is, with high probability, a step $m$ such that 
$\langle G(n,m)\rangle_{K_r}$ has density 
$O(n^{-1/\lambda})$ but $\langle G(n,m+1)\rangle_{K_r}=K_n$. 

Let us also recall the discussion 
surrounding \eqref{E_L} above
regarding a potential ``critical 
droplet'' of order $L$ for $K_r$-percolation. 
It would appear that such a graph should
have the peculiar property that it has no percolating proper subgraphs
of size $k\gg1$ (cf.\ the discussion in \cite{BBM12} at the top of p.\ 432 therein).
We note, quite intriguingly, that if the $K_r$-dynamics were to somehow create
a large enough clique of order
$L$ then, by the work of Janson, \L uczak, Turova and Vallier
\cite{JLTV12} (see Theorem 3.1 and compare (3.1) therein with  
\eqref{E_L} above),  
``nucleation'' (that is, the $(r-2)$-neighbor dynamics) would then 
take over 
and lead to full percolation.

Finally, let us mention that another natural direction for 
future study is the speed (number of rounds) of the $K_r$-dynamics on $\Gnp$. 
Such questions might be the most interesting when $p$ is barely supercritical 
(but can be asked more generally), as the speed will be intimately related to the 
(still largely mysterious) way in which $\Gnp$ percolates. 
We note that the speed of the $r$-neighbor dynamics 
on $\Gnp$ is studied in \cite{JLTV12}.

\subsection{Slow convergence}
\label{S_slow}
The 
simulations in Figure \ref{F_pc} above 
might give the impression   
that $K_5$-percolation 
spreads by nucleation, 
however, this is not be  
representative of the behavior as $n\to\infty$; let us explain. 
In these simulations, 
$n=2000$
and $r=5$, and so $L$
in \eqref{E_L} is only about 
$L\approx 1.6$. In this figure we are, in fact,  mostly  seeing
the nucleating $3$-neighbor dynamics. 

Bootstrap percolation is notoriously
difficult to simulate. For instance, as discussed in \cite{Hol03,HT24}, 
conjectures in the physics literature for the 
constant in $p_c\sim (\pi^2/18)/\log n$ for the $2$-neighbor dynamics 
on $[n]^2$ were far from the true value. 
That being said, simulations with only $n=300$ (see, e.g., 
\cite{Hol05}) already show the correct qualitative super-critical
behavior of the model: a rectangle grows by nucleation 
(its perimeter slowing expanding) 
until it reaches a critical size (a ``critical droplet''), 
and then the process explodes.

In this respect, $K_r$-percolation, with $r\ge5$, seems 
to be even more impervious to simulation: We expect 
nucleation (the $(r-2)$-neighbor dynamics) to take over 
once a clique of size order $L$ is formed. However, 
it appears that $n$ would need to be extremely large 
(perhaps well outside any computationally feasible range) 
in order for $L$ to be large enough to provide some  
clues about how 
$\Gnp$ manages to form this ``critical droplet'' of size $L$. 

This is perhaps reminiscent of the {\it slow aging} 
phenomenon in statistical
physics \cite{HP10}, where a system converges to its 
equilibrium very slowly, and at various 
intermediate time scales can exhibit different types of behavior. 
In our current context, 
$K_r$-percolation, with $r\ge5$, and $K_4$-percolation 
appear to behave similarly (the $(r-2)$-neighbor dynamics dominate) 
for reasonably large values 
of $n$, but nonetheless, their behaviors diverge quite drastically
in the large $n$ limit.

\subsection{Strictly balanced and general graph templates}

Our work focuses on the case $H=K_r$, but many questions 
about $H$-percolation remain open for general graphs $H$. 
Most prominently, it would 
be interesting to determine $p_c(n,H)$ 
and whether the $H$-percolation threshold is sharp.

Cliques $K_r$ are balanced, but only strictly balanced once $r\ge 5$, 
and this property plays a central role in our proof of Theorem \ref{T_pc}. 
Hence, it seems plausible that 
$p_c=\Theta(n^{-1/\lambda})$ for {\it all} 
strictly-balanced graphs $H$. The upper bound 
$p_c=O(n^{-1/\lambda})$ for such $H$ is proved in \cite{BK24}. 
Moreover, it is possible that our proof can be adapted to all 
strictly-balanced graphs
Indeed, our proof of the sharp upper bound in 
Proposition \ref{P_pcUB}
does not use the fact that 
$H$ is a clique in any  crucial way 
(only that $K_r$, $r\ge5$, is strictly balanced), 
apart from the fact that the 
exponential growth rate $\gamma$ of TWGs
has a convenient form in this case. Our proof 
of the sharp lower bound 
in Proposition \ref{S_sharpLB}, on the other hand, 
uses the fact that $H$ is a clique in a number of places. 
However, we note that, 
in our proof of Proposition \ref{P_pcLBsharp}, 
there is an extra factor $p^{\xi\kappa}$ that is not fully used. 
In fact, we only used here that $\xi\ge0$, although in fact 
$\xi>0$ by Lemma \ref{L_chikappa}. 
Indeed, the only place in our proof that $\xi>0$ 
plays a role is in ruling out very costly witness graphs. 
It could be that this extra factor  $p^{\xi\kappa}$ 
will be helpful in analyzing general strictly balanced graphs $H$.

\subsection{Rigidity of witness graphs}\label{S_rigid}
The connection between $K_r$-percolation and the 
algebraic notion of generic $(r-2)$-rigidity theory 
goes back to Kalai \cite {Kal84}. 
A similar connection between volume-rigidity 
and hypergraph bootstrap percolation was used in \cite {LP23} 
to obtain a lower bound for the corresponding $p_c$. 
Hence, it seemed plausible 
to us, at the outset of this work, that 
rigidity theory would be helpful with studying 
$K_r$-percolation, but, in the end, we were not 
able to make progress with this approach. 
For instance, we are unable to prove the following stronger version of the 
spread lemma (Lemma \ref{L_SprLem}): Fix $r\ge 5$. 
Let $T$ be a $K_r$-tree, $G$ be a graph with $|V(G)\cap V(T)|\le x$, 
and $C$ denote the closure in the generic $(r-2)$-rigidity matroid of $G\cup T$. 
Then, is it true that at most $O(x)$ vertices of $T$ have a neighbor 
in $C$ that was not already their neighbor in $T$? This is stronger than the 
spread lemma since $C\supseteq\langle G\cup T\rangle_{K_r}$, 
which follows from \cite{Kal84}.

\section*{Acknowledgements}

ZB was supported by 
National Research, Development and Innovation Office (NKFIH) 
grant KKP no.\ 138270. BK was partially supported by a 
Florence Nightingale Bicentennial Fellowship (Oxford Statistics)
and a Senior Demyship (Magdalen College).
GK was supported by the 
Royal Commission for the Exhibition of 1851. 
YP was partially supported by the 
Israel Science Foundation grant ISF-3464/24. 
Part of this project was completed during a visit  
to Magdalen College in July 2024. 
We thank the college for its hospitality
and wonderful working environment.

\makeatletter
\renewcommand\@biblabel[1]{#1.}
\makeatother

\providecommand{\bysame}{\leavevmode\hbox to3em{\hrulefill}\thinspace}
\providecommand{\MR}{\relax\ifhmode\unskip\space\fi MR }
\providecommand{\MRhref}[2]{%
  \href{http://www.ams.org/mathscinet-getitem?mr=#1}{#2}
}
\providecommand{\href}[2]{#2}

\end{document}